\documentclass[12pt]{amsart}

\usepackage[english]{babel}
\usepackage[T1]{fontenc}
\usepackage[utf8]{inputenc} 

\setcounter{tocdepth}{1}

\usepackage{amssymb,latexsym}
\usepackage{mathtools}
\usepackage{enumerate}  
\usepackage{a4wide}    

\usepackage{hyperref}
\usepackage{cleveref}
\usepackage{latexsym,amsmath,amssymb}
\usepackage{accents}
\usepackage{color}  


\title[Calderon-Zygmund theory for nonlocal equations]{Calderon-Zygmund theory for non-convolution type nonlocal equations with continuous coefficient}
\author[M.M. Fall]{Mouhamed Moustapha Fall}
\address[Mouhammed Moustapha Fall]{African Institute for Mathematical Sciences in Senegal, AIMS-Senegal  KM 2, Route de Joal  B.P. 14 18 Mbour, Sénégal  }
\email{mouhamed.m.fall@aims-senegal.org}
\author[T. Mengesha]{Tadele Mengesha}
\address[Tadele Mengesha]{Department of Mathematics, The University of Tennessee, Knoxville, 204 Ayres Hall, 1403 Circle Drive
Knoxville, TN, 37996.
}
\email{mengesha@utk.edu}

\author[A. Schikorra]{Armin Schikorra}
\address[Armin Schikorra]{Department of Mathematics,
University of Pittsburgh,
301 Thackeray Hall,
Pittsburgh, PA 15260, USA}
\email{armin@pitt.edu}

\author[S. Yeepo]{Sasikarn Yeepo}
\address[Sasikarn Yeepo]{Department of Mathematics and Computer Science, Faculty of Science, Chulalongkorn University, Bangkok 10330, Thailand}
\email{6072857023@student.chula.ac.th}

\setlength\parindent{0pt}

\belowdisplayskip=18pt plus 6pt minus 12pt \abovedisplayskip=18pt
plus 6pt minus 12pt
\parskip 8pt plus 1pt


\def\eps{\varepsilon}


\def\F{{\mathcal F}}

\def\N{{\mathbb N}}

\def\S{{\mathbb S}}

\def\T{{\mathbb T}}

\newcommand{\subsubset}{\subset\subset}

\newtheorem{theorem}{Theorem}

\newtheorem{lemma}[theorem]{Lemma}
\newtheorem{corollary}[theorem]{Corollary}
\newtheorem{proposition}[theorem]{Proposition}

\theoremstyle{definition}


\def\supp{{\rm supp\,}}


\newcommand{\R}{\mathbb{R}}

\newcommand{\Z}{\mathbb{Z}}

\newcommand{\brac}[1]{\left (#1 \right )}
\newcommand{\abs}[1]{\left |#1 \right |}

\newcommand{\barint}{
\rule[.036in]{.12in}{.009in}\kern-.16in \displaystyle\int }

\newcommand{\barcal}{\mbox{$ \rule[.036in]{.11in}{.007in}\kern-.128in\int $}}


\def\mvint_#1{\mathchoice
          {\mathop{\vrule width 6pt height 3 pt depth -2.5pt
                  \kern -8pt \intop}\nolimits_{\kern -3pt #1}}%
          {\mathop{\vrule width 5pt height 3 pt depth -2.6pt
                  \kern -6pt \intop}\nolimits_{#1}}%
          {\mathop{\vrule width 5pt height 3 pt depth -2.6pt
                  \kern -6pt \intop}\nolimits_{#1}}%
          {\mathop{\vrule width 5pt height 3 pt depth -2.6pt
                  \kern -6pt \intop}\nolimits_{#1}}}


\numberwithin{theorem}{section} \numberwithin{equation}{section}

\newcommand{\lap}{\Delta }

\newcommand{\aleq}{\lesssim}
\newcommand{\ageq}{\gtrsim}
\newcommand{\aeq}{\approx}

\newcommand{\laps}[1]{(-\lap) ^{\frac{#1}{2}}}

\newcommand{\lapms}[1]{I^{#1}}

\begin{document}
\begin{abstract}
Given $2\leq p<\infty$, $s\in (0, 1)$ and $t\in (1, 2s)$, we establish interior $W^{t,p}$ Calderon-Zygmund estimates for solutions of nonlocal equations of the form
\[
 \int_{\Omega} \int_{\Omega} K\left (x,|x-y|,\frac{x-y}{|x-y|}\right ) \frac{(u(x)-u(y))(\varphi(x)-\varphi(y))}{|x-y|^{n+2s}} dx dy = g[\varphi], \quad \forall \phi\in C_c^{\infty}(\Omega)
\]
where $\Omega\subset \mathbb{R}^{n}$ is an open set. 
 Here we assume $K$ is bounded, nonnegative and continuous in the first entry -- and ellipticity is ensured by assuming that $K$ is strictly positive in a cone.

The setup is chosen so that it is applicable for nonlocal equations on manifolds, but the structure of the equation is general enough that it also applies to the certain fractional $p$-Laplace equations around points where $u \in C^1$ and $|\nabla u| \neq 0$.
\end{abstract}

\everymath{\displaystyle}
\maketitle
\tableofcontents
\section{Introduction and main results}
Let $s \in (0,1)$  and $\Omega \subset \R^n$ be an open set. In this work we study the interior $W^{t,p}$-regularity theory for functions $u\in W^{s, 2}(\Omega)$ that satisfy the nonlocal equation
{\small \begin{equation}\label{eq:ourpde}
 \int_{\Omega} \int_{\Omega}  K\brac{x,|x-y|,\frac{x-y}{|x-y|}} \frac{{(u(x)-u(y))(\varphi(x)-\varphi(y))}}{|x-y|^{n+2s}}\, dx\, dy = f[\varphi], \quad \forall \varphi \in C_c^\infty(\Omega).
\end{equation}}
\noindent where $f[\varphi]$ represents the action of the distribution $f$ on the smooth function $\varphi$. 
We take $K(\cdot,\cdot,\cdot): \R^n \times [0,\infty) \times \S^{n-1} \to [0,\Lambda]$ as our coefficient kernel and is a nonnegative bounded and measurable function which is strictly positive on a substantial subset of $\S^{n-1}$. The model case is $K(x,r,h) = \frac{|\phi(x)-\phi(x+rh)|}{|r|}$ for a map $\phi$ with $|\nabla \phi(x)| > 0$. In particular, if $\phi$ is a diffeomorphism that parameterizes an $n$-dimension manifold $\mathcal{M}$, then \eqref{eq:ourpde} could be a nonlocal differential equation on $\mathcal{M}$. Other choices of naturally appearing kernels will be studied later. 

For $t \geq s$, local $W^{t,p}$-Calderon Zygmund theory has been studied in \cite{MSYHoelder20} for a different type of kernels by three of the authors. On the other hand the first author recently studied H\"older regularity theory for \eqref{eq:ourpde} in \cite{Fall18}. To some extent, with respect to motivation, techniques, and results, this paper is a combination of the ideas in \cite{Fall18} and \cite{MSYHoelder20}, and we refer to those papers for a background and motivation of equations of the type we are considering.

The following theorem states the main result of the paper. 
\begin{theorem}\label{th:main}
Suppose that $ \Omega \subseteq \R^n$ is open and $\Omega_1 \subsubset \Omega$ is bounded.  Assume $\lambda > 1$, $\eta>0$, $s_1+s_2 = 2s$ and $s_1,s_2,s\in(0,1)$, $p \in (1,\infty)$.

Let $K:\R^n \times \R_+ \times \S^{n-1} \to \R$ be in all entries bounded and measurable, assume that the kernel is nondegenerate in the following sense
\[
 0 \leq K(x,r,h) \leq \Lambda \quad \forall x \in \R^n, r \geq 0,\ h \in \S^{n-1} 
\]
and there exists    $\Sigma \subset \S^{n-1}$ with $|\Sigma|>0$ such that for every $x\in \mathbb{R}^{n}$
\begin{equation}\label{eq:lambdaangleass}
  \eta\leq  \inf_{h \in \Sigma} K(x,0,h).
\end{equation}
Assume also the following continuity around points on $\Omega_1$: for every $x_0\in \Omega_1 $,  and some $\mathfrak{R}>0$ such that $B(x_0, 5\mathfrak{R})\subsubset \Omega$, and each $\eps > 0$,  there exists $\lambda> 0$ such that
\begin{equation}\label{eq:smallness}
 \sup_{r <\lambda \mathfrak{R}} \sup_{x \in B(x_0,\lambda \mathfrak{R})} \sup_{|h| = 1} |K(x,r,h)-K(x_0,0,h)| \leq \eps.
\end{equation}
Now, for a given a given distribution $g$, let $u \in W^{s,2}(\Omega)$ satisfies
\[
 \int_{\Omega}\int_{\Omega} K\brac{x,|x-y|,\frac{x-y}{|x-y|}} \frac{(u(x)-u(y))(\varphi(x)-\varphi(y))}{|x-y|^{n+2s}}\, dx\, dy = g[\varphi], \quad \forall \varphi \in C_c^\infty(\Omega_1). 
\]
Then, if for some $\Lambda > 0$,  we have the estimate 
\[
 |g[\varphi]| \leq \Lambda\, \left(\|\varphi\|_{L^{p'}(\R^n)}+[\varphi]_{W^{s_2,p'}(\R^n)}\right), \]
for all $\varphi \in C_c^\infty(\Omega_1)$,    
then  $u \in W^{s_1,p}_{loc}(\Omega_1)$. \end{theorem}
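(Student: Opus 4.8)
The plan is to prove the $W^{s_1,p}_{loc}$ regularity by a perturbation/freezing argument combined with a Calderón–Zygmund-type iteration on dyadic scales, in the spirit of the translation-invariant theory of \cite{MSYHoelder20} adapted via the localization ideas of \cite{Fall18}. The main point is that near each $x_0 \in \Omega_1$ the operator with kernel $K(x,|x-y|, (x-y)/|x-y|)$ is, by the continuity hypothesis \eqref{eq:smallness}, a small $L^\infty$-perturbation of the translation-invariant operator $\mathcal{L}_0$ with kernel $K(x_0,0,h)$, which is precisely of the type for which interior $W^{t,p}$ estimates are already available (and for which the nondegeneracy \eqref{eq:lambdaangleass} gives the required ellipticity in a cone). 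So the proof is essentially a quantitative "the solution locally looks like a solution of the constant-coefficient equation'' argument.

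First I would localize: fix $x_0 \in \Omega_1$, choose $\mathfrak{R}$ as in the hypothesis, and a cutoff $\tau \in C_c^\infty(B(x_0,2\rho))$ with $\tau \equiv 1$ on $B(x_0,\rho)$ for $\rho$ comparable to $\lambda \mathfrak{R}$. Testing \eqref{eq:ourpde} against $\varphi$ supported in $B(x_0,\rho)$ and splitting the double integral into the near-diagonal part on $B(x_0,2\rho)^2$ and the tail interactions with $\R^n \setminus B(x_0,2\rho)$, one rewrites the equation for the localized function as an equation on all of $\R^n$ of the same form with kernel $K(x_0,0,\cdot)$ plus (i) an error coming from replacing $K(x,|x-y|,\cdot)$ by $K(x_0,0,\cdot)$, controlled by $\varepsilon$ via \eqref{eq:smallness}, (ii) commutator terms involving $\nabla \tau$, and (iii) a tail term, both of which are as regular as we like (they only see $u$ at fixed positive distance, where $u \in W^{s,2}$ suffices) and can be absorbed into the right-hand side, increasing $\Lambda$ but preserving the structural estimate $|g[\varphi]| \leq \Lambda(\|\varphi\|_{L^{p'}} + [\varphi]_{W^{s_2,p'}})$. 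This produces a function $v := \tau u$ solving a constant-coefficient equation on $\R^n$ with a right-hand side in the dual of $W^{s_2,p'}$, up to a term with small operator norm.

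Next I would bootstrap. The constant-coefficient $W^{t,p}$ estimate of \cite{MSYHoelder20} applied to $v$ gives $v \in W^{s_1,p}$ on a slightly smaller ball, provided the small-perturbation error term can be moved to the other side: since its contribution to the tested equation is bounded by $C\varepsilon [v]_{W^{s,?}}\|\varphi\|_{W^{s_2,p'}}$-type quantities, choosing $\varepsilon$ (hence $\lambda$) small enough makes the corresponding operator a contraction on the relevant function space, and a fixed-point / absorption argument closes the estimate. Because the gain in regularity from $W^{s,2}$ to $W^{s_1,p}$ may be larger than one step allows (one must also upgrade integrability from $2$ to $p$), I expect to iterate finitely many times, at each stage using Sobolev embedding to increase the exponent and the perturbation estimate to increase the differentiability order, until reaching $W^{s_1,p}$; a covering argument over $x_0 \in \Omega_1$ then gives the local statement on $\Omega_1$.

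The main obstacle, and the technical heart of the argument, is step two: controlling the perturbation error term $\int\int (K(x,|x-y|,\tfrac{x-y}{|x-y|}) - K(x_0,0,\tfrac{x-y}{|x-y|})) \tfrac{(v(x)-v(y))(\varphi(x)-\varphi(y))}{|x-y|^{n+2s}}\,dx\,dy$ in a norm that is \emph{dual} to the space $W^{s_2,p'}$ in which $\varphi$ lives, while only knowing smallness of $K-K(x_0,0,\cdot)$ in $L^\infty$ and only a priori regularity $v \in W^{s,2}$. This requires a careful Hölder-in-the-bilinear-form estimate splitting $2s = s_1 + s_2$ unevenly between the two factors (so that the $u$-factor is measured in $W^{s_1,p}$ — the very norm we are trying to bound — and the $\varphi$-factor in $W^{s_2,p'}$), which is exactly why the absorption/contraction structure, rather than a direct estimate, is needed; getting the tail and commutator terms to genuinely land in the stated dual norm, uniformly as the scale shrinks, is the other delicate bookkeeping point.
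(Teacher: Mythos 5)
Your high‑level plan is the same as the paper's: localize near $x_0$, freeze the coefficient at $K(x_0,0,\cdot)$, view the equation as a small perturbation of a translation‑invariant one, absorb the small error term in a dual‑norm estimate with the split $2s=s_1+s_2$, and then iterate with Sobolev embedding to simultaneously raise differentiability and integrability before covering $\Omega_1$. That skeleton is exactly what happens in the paper (Proposition~\ref{pr:rewrite}, Corollary~\ref{apEst}, Theorem~\ref{th:fixedpoint}, then the covering iteration in the proof of Theorem~\ref{th:main}).

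There are, however, two places where your route diverges from the paper's and where you would hit real obstructions if you tried to carry it out exactly as written. First, your base estimate: you invoke the constant‑coefficient $W^{t,p}$ theory of \cite{MSYHoelder20} on $\R^n$, but that reference treats a \emph{different} kernel class (the paper says so explicitly), and in particular it is not written for kernels that are merely bounded below on a cone $\Sigma\subset\S^{n-1}$ with $|\Sigma|>0$ rather than on all of $\S^{n-1}$. The paper's workaround is to develop a dedicated translation‑invariant theory on the torus (Theorem~\ref{th:trinv}), following Dong--Kim: after cutting off, they periodize and replace the single‑power kernel by $\mu(h)=\sum_{m}K(\tfrac{h+m}{|h+m|})/|h+m|^{n+2s}$, which makes the far‑field/tail terms uniformly bounded and lets them use Fourier‑series methods; the crucial cone‑ellipticity is then established via Lemma~\ref{la:ftintegralcond-Cone}, which shows $\int (1-\cos(2\xi\cdot h))K(h/|h|)|h|^{-n-2s}\,dh\gtrsim\eta|\xi|^{2s}$ using only positivity on $\Sigma$. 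You acknowledge the cone structure but you cannot simply cite \cite{MSYHoelder20} for the frozen operator; you would need an analogue of Lemma~\ref{la:ftintegralcond-Cone} and a Calder\'on--Zygmund estimate built on it. Second, your phrase ``moved to the other side / fixed‑point absorption'' hides a real issue that the paper treats separately: the absorption step only yields an \emph{a priori} estimate that presupposes $v\in W^{2s-\tilde t,q'}$, which is precisely what one does not yet know. The paper's Theorem~\ref{th:fixedpoint} resolves this by proving uniqueness of the $W^{s,2}$ solution and then constructing, by a genuine contraction iteration on the linearized problem with mollified data, a solution that does lie in the better space; matching by uniqueness then upgrades the given $u$. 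If you omit this step, the argument is circular. With these two points repaired, your proposal aligns with the paper's proof.
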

In the statement above, $p'$ is the H\"older conjugate of $p$. Notice that the theorem is interesting when $s_2 < s$. 
For $p=2$ the result of the theorem is only slightly better than \cite{cozzi} where H\"older continuity of $K$ is assumed, but for $ s_1\neq s$ and $p \neq 2$ this is a new result. Recently, S. Nowak developed $W^{s,p}$-regularity theory under the weaker vanishing mean oscillation assumptions on the kernel, \cite{Nowak19,nowak2020higher,nowak2021}. It is a natural question if for our main theorem the continuity condition (for $t < 1$) can be relaxed to a VMO-type condition, but we will leave this issue for a future work.


We conjecture that the condition $s_1 < 1$ in \Cref{th:main} to be sharp, although we do not have a counterexample. However, if we additionally assume H\"older continuity of the kernel, the result of \Cref{th:main} holds for $s_1 > 1$ as well. The following theorem states this result. 

\begin{theorem}\label{th:main:above1}
Assume all the conditions of \Cref{th:main} hold. Assume additionally $\alpha$-H\"older continuity of $K$ in the first variable, i.e.
\begin{equation}\label{eq:Khoelder}
 \sup_{x,y \in \R^n} \sup_{|h| = 1, r\geq 0} |K(x,r,h)-K(y,r,h)| \leq \Lambda |x-y|^\alpha.
\end{equation}
Then the conclusion of \Cref{th:main} hold for $s_1 < \min\{2s,1+\alpha\}$ (instead of $s_1 < \min\{2s,1\}$).
\end{theorem}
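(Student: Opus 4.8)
\textbf{Proof proposal for Theorem~\ref{th:main:above1}.}

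The plan is to bootstrap from Theorem~\ref{th:main} by a difference-quotient argument that exploits the extra H\"older regularity of the kernel, mirroring the classical passage from $W^{1,p}$ to $W^{2,p}$ in the local Calder\'on--Zygmund theory. By Theorem~\ref{th:main} applied with $s_1$ replaced by any $s_1' \in (0, \min\{2s,1\})$, we already know $u \in W^{s_1',p}_{loc}(\Omega_1)$; in particular the interesting range is $s_1 \in [1, 1+\alpha)$ with $1 < 2s$, and it suffices to gain a fractional derivative of order $s_1 - 1 < \alpha$ beyond the full derivative. First I would fix a nested pair of balls $B(x_0, 2\rho) \subset B(x_0, 4\mathfrak{R}) \subsubset \Omega$ and, for a small vector $z \in \R^n$, introduce the translated function $u_z(x) := u(x+z)$ and the difference $w_z := u_z - u$. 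Testing the equation against $\varphi(\cdot - z)$ and subtracting, one finds that $w_z$ solves a nonlocal equation of the same structural form with the \emph{same} kernel (up to a translation $x \mapsto x+z$ in the first slot, which preserves all hypotheses uniformly), plus a commutator-type right-hand side
\[
 \mathcal{E}_z[\varphi] \;=\; \int\!\!\int \Big(K\big(x,|x-y|,\tfrac{x-y}{|x-y|}\big) - K\big(x+z,|x-y|,\tfrac{x-y}{|x-y|}\big)\Big)\frac{(u_z(x)-u_z(y))(\varphi(x)-\varphi(y))}{|x-y|^{n+2s}}\, dx\, dy \;+\; \big(g[\varphi(\cdot-z)] - g[\varphi]\big).
\]

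The second key step is to estimate $\mathcal{E}_z$ as a functional on $W^{s_2,p'}$. For the kernel-difference term I would use \eqref{eq:Khoelder} to bound the kernel difference by $\Lambda |z|^\alpha$, and then control the resulting bilinear form by $\Lambda |z|^\alpha \, [u_z]_{W^{s,2}} \, [\varphi]_{W^{s,2}}$ in the worst case — but to land in the dual of $W^{s_2,p'}$ I instead need the finer bound using that, by the already-established $W^{s_1',p}$ regularity of $u$ (for $s_1'$ slightly below $1$), the seminorm-type quantity $\big[\,u_z\,\big]$ controlling the interaction can be taken in $L^p$-based spaces, giving $|\mathcal{E}_z[\varphi]| \lesssim \Lambda |z|^\alpha \big(\|u\|_{W^{s_1',p}(B(x_0,4\mathfrak R))} + \text{(tail)}\big)\big(\|\varphi\|_{L^{p'}} + [\varphi]_{W^{s_2,p'}}\big)$; the translation term in $g$ is handled similarly using the hypothesis on $g$ together with an interpolation to absorb the $|z|$-dependence. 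The third step is then to apply Theorem~\ref{th:main} \emph{to $w_z$} with the same exponents, concluding $[w_z]_{W^{s_1',p}(B(x_0,\rho))} \lesssim |z|^\alpha \, C(u,g,\Lambda)$ with $C$ independent of $z$. Finally, one reads off the gain: an estimate of the form $[u(\cdot+z) - u(\cdot)]_{W^{s_1',p}} \lesssim |z|^\alpha$ uniformly for small $|z|$ upgrades $u$ from $W^{s_1',p}_{loc}$ to $W^{s_1'+\alpha,p}_{loc}$ by the standard characterization of fractional Sobolev spaces via $L^p$-moduli of continuity of translates (iterating the translation estimate, or using a Besov-space embedding), and since $s_1' + \alpha$ can be taken arbitrarily close to $\min\{2s, 1\} + \alpha \geq \min\{2s, 1+\alpha\}$, a finite iteration covers the whole asserted range $s_1 < \min\{2s, 1+\alpha\}$.

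I expect the main obstacle to be the \emph{uniformity of the tail and lower-order terms} when differencing the equation: the nonlocal operator sees all of $\Omega$ (indeed the $W^{s,2}(\Omega)$-tail of $u$), so testing against a cutoff of $\varphi(\cdot-z)$ produces not only the clean commutator above but also boundary/cutoff error terms coming from the fact that $\varphi(\cdot - z)$ need not be supported in $\Omega_1$, and from the interaction of $w_z$ with the region near $\partial\Omega_1$. Controlling these requires a careful localization — choosing the support of $\varphi$ and the size of $|z|$ comparable to a fixed fraction of $\operatorname{dist}(x_0, \partial\Omega_1)$ — and checking that the resulting error is again $O(|z|^\alpha)$ times a norm of $u$ that has \emph{already} been bounded at the previous stage of the iteration, so that no circularity arises; making the constants in \eqref{eq:smallness} and the hidden constants in Theorem~\ref{th:main} track through the translation uniformly in $z$ is the delicate bookkeeping. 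A secondary subtlety is that the first iteration starts from $s_1' < 1$, where $u$ has less than one derivative, so the very first difference-quotient estimate must be set up so that the right-hand side only involves $[u]_{W^{s_1',p}}$ and not a full gradient; once $s_1' \geq \alpha$-close to $1$ is reached the subsequent steps are routine.
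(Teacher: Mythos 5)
Your proposal is \emph{correct in spirit but takes a genuinely different route} from the paper. The paper does not difference-quotient at all: instead it proves Theorem~\ref{th:diffeq}, which applies the continuous fractional derivative $\laps{r}$ to the equation ($r<\min\{t,\alpha\}$) and keeps track of the commutator $\mathcal{H}(u,\varphi)$ produced by moving $\laps{r}$ past the kernel, via the Leibniz-type formula $\laps{r}(ab)(x)-a(x)\laps{r}b(x)=c\int\frac{(a(x)-a(z))b(z)}{|x-z|^{n+r}}\,dz$. The $\alpha$-H\"older continuity of $K$ enters exactly where it does in your argument — to show that the commutator kernel $\kappa$ decays, so that $\mathcal{H}(u,\varphi)$ is again a distribution of the form required by Theorem~\ref{th:main}. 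The proof of Theorem~\ref{th:main:above1} then simply applies Theorem~\ref{th:main} (for $t<1$, $t\approx 1$), passes to the equation for $\laps{r}u$ via Theorem~\ref{th:diffeq}, and applies Theorem~\ref{th:main} again to $\laps{r}u$, iterating as needed.

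Your discrete-translation argument is the Nirenberg-style analogue: you subtract the equation at $x$ and $x+z$, bound the kernel-difference bilinear form by $\Lambda|z|^\alpha$ times previously-controlled norms, apply Theorem~\ref{th:main} to $w_z=u(\cdot+z)-u$ to obtain $[w_z]_{W^{s_1',p}}\lesssim|z|^\alpha$, and then upgrade via the Besov characterization of $L^p$-moduli of continuity of translates. Both strategies hinge on the same commutator idea; the differences in practice are: (i) the paper's route avoids the Besov-to-Sobolev conversion (your last step passes through $B^{s_1'+\alpha}_{p,\infty}\hookrightarrow W^{s_1'+\alpha-\eps,p}$, harmless since the exponent bound is strict, but an extra layer); (ii) the paper's $\laps{r}$-commutator is estimated pointwise via the maximal-function inequality $|f(z)-f(z+h)|\lesssim|h|^\sigma(\mathcal{M}\laps{\sigma}f(z)+\mathcal{M}\laps{\sigma}f(z+h))$, which is a somewhat different flavor of harmonic analysis than the straightforward $L^p\times L^{p'}$ pairing you use; (iii) your approach must contend with the domain shifting under $x\mapsto x+z$, whereas $\laps{r}$ is translation-invariant so the domain issue manifests instead as a need for a global cutoff (the paper's Theorem~\ref{th:diffeq} is stated for $u\in W^{t,p}\cap W^{s,2}(\R^n)$ and is used locally through the localization machinery built into the proof of Theorem~\ref{th:main}). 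You correctly identified the two delicate points — tail/localization uniformity in $z$, and that the first step must run with $s_1'<1$ — and those would have to be resolved just as the paper resolves the corresponding issues for $\laps{r}u$. As a plan, your argument is sound.
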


%
%
We refer to \cite{Fall18} for applicability of our result to the fractional mean curvature, cf. \cite{Fall18nonlocmcvsurvey}. The structure of the kernel $K$ appears naturally in applications that leads to linear nonlocal differential equations on $n$-manifolds $\mathcal{M}$, say 
\[
 \mathcal{L}_{\mathcal{M},s}(u,\varphi) := \int_{\mathcal{M}} \int_{\mathcal{M}} \frac{\brac{u(x)-u(y)}\brac{\varphi(x)-\varphi(y)}}{d_{\mathcal{M}}(x,y)^{n+2s}}\, dx\, dy.
\]
for smooth $\phi \in C_{0}^{\infty}(\mathcal{M}).$
For simplicity, we may assume that all of $\mathcal{M}$ is parametrized by a bi-lipschitz diffeomorphism $\Phi: \Omega \to \mathcal{M}$ (otherwise one can work on a coordinate patch with the help of a decomposition of unity). Then for any $\psi \in C_c^\infty(\Omega)$, by change of variables we have 
\[
\mathcal{L}_{\mathcal{M},s}(u,\psi \circ \Phi^{-1}) = \int_{\Omega} \int_{\Omega} \frac{\brac{\tilde{u}(x)-\tilde{u}(y)}\brac{\psi(x)-\psi(y)}} {d_{\mathcal{M}}(\Phi(x),\Phi(y))^{n+2s}}\, {\rm Jac}(D\Phi(x))\, {\rm Jac}(D\Phi(y))\, dx\, dy 
\]
where $\tilde{u} := u \circ \Phi$. 
Thus,  if we set 
\[
 K(x,r,h) := \brac{\frac{r}{d_{\mathcal{M}} (\Phi(x),\Phi(x+rh))}}^{n+2s} {\rm Jac}(D\Phi(x))\, {\rm Jac}(D\Phi(x+rh))
\]
we have 
\[
 \mathcal{L}_{K,s}(\tilde{u},\psi) = \mathcal{L}_{\mathcal{M},s}(u,\psi \circ \Phi^{-1}).
\]
Observe that 
\[
  K(x,0,h) \aeq |D\Phi(x) h|^{-n-2s} 
\]
which is uniformly bounded from below because $D\Phi$ has maximal rank as a diffeomorphism. In particular we have the following

\begin{corollary}\label{co:main:diffeo}
The conclusion of \Cref{th:main} is true for the equation 
\[
 \int_{\Omega}\int_{\Omega} \frac{(u(x)-u(y))(\varphi(x)-\varphi(y))}{|\phi(x)-\phi(y)|^{n+2s}}\, dx\, dy = g[\varphi] \quad \forall \varphi \in C_c^\infty(\Omega)
\]
where $\phi: \Omega \subset \R^n \to \R^m$ is bi-Lipschitz and $C^{1,\alpha}$-differentiable.
\end{corollary}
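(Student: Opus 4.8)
The strategy is to exhibit the equation in \Cref{co:main:diffeo} as an instance of \eqref{eq:ourpde} and then to check, for the resulting kernel, all the hypotheses of \Cref{th:main} (and, using the extra $C^{1,\alpha}$ regularity, of \Cref{th:main:above1}). I would set
\[
 K(x,r,h) := \brac{\frac{r}{|\phi(x)-\phi(x+rh)|}}^{n+2s} \quad (r>0), \qquad K(x,0,h) := |D\phi(x)h|^{-(n+2s)},
\]
so that $K\brac{x,|x-y|,\tfrac{x-y}{|x-y|}}\,|x-y|^{-n-2s} = |\phi(x)-\phi(y)|^{-n-2s}$ and the equation of the corollary becomes precisely \eqref{eq:ourpde} for this $K$, with the distribution $g$ in the role of $f$. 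Since every assertion is interior and local near $\Omega_1$, I would work on a fixed compact neighborhood of $\overline{\Omega_1}$ inside $\Omega$ on which $\phi$ is bi-Lipschitz with constants $0<\ell\le L$ and is $C^{1,\alpha}$; the values of $K$ far from $\Omega_1$ play no role and may be truncated so that any globally phrased bound in the theorems holds.

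\emph{Boundedness and nondegeneracy.} The bi-Lipschitz bounds give $\ell \le r^{-1}|\phi(x+rh)-\phi(x)| \le L$ for all relevant $x$, all $r>0$ and all $|h|=1$; letting $r\to0$, via $r^{-1}(\phi(x+rh)-\phi(x)) = \int_0^1 D\phi(x+\tau rh)\,h\,d\tau \to D\phi(x)h$, also $\ell \le |D\phi(x)h| \le L$. Hence $L^{-(n+2s)}\le K\le \ell^{-(n+2s)}$ throughout, which furnishes the upper bound $\Lambda:=\ell^{-(n+2s)}$ and, with $\Sigma:=\S^{n-1}$ (so $|\Sigma|>0$), the ellipticity constant $\eta:=L^{-(n+2s)}$ in \eqref{eq:lambdaangleass}. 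Measurability is clear since $K$ is in fact continuous wherever $\phi\in C^1$.

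\emph{Continuity of $K$ in the first slot.} This is the only delicate step. From the identity $r^{-1}(\phi(x+rh)-\phi(x))=\int_0^1 D\phi(x+\tau rh)\,h\,d\tau$ one obtains, for $x,y$ and $r$ in the relevant compact range and $|h|=1$,
\[
 \abs{\frac{\phi(x+rh)-\phi(x)}{r}-\frac{\phi(y+rh)-\phi(y)}{r}} \le \sup_{0\le\tau\le1}|D\phi(x+\tau rh)-D\phi(y+\tau rh)|,
\]
and, comparing instead to $D\phi(x_0)h$, the analogous bound with $\sup_{0\le\tau\le1}|D\phi(x+\tau rh)-D\phi(x_0)|$ on the right. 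Combining with the reverse triangle inequality for $|\cdot|$ and the fact that $t\mapsto t^{-(n+2s)}$ is Lipschitz on $[\ell,L]$, uniform continuity of $D\phi$ on a compact neighborhood yields \eqref{eq:smallness} (uniformly in $h$ and in $r$), while $\alpha$-Hölder continuity of $D\phi$ yields $|K(x,r,h)-K(y,r,h)|\le C|x-y|^\alpha$, that is, \eqref{eq:Khoelder}.

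All hypotheses being in place, \Cref{th:main} applies and gives $u\in W^{s_1,p}_{loc}(\Omega_1)$; if one additionally feeds in \eqref{eq:Khoelder} and invokes \Cref{th:main:above1}, the range improves to $s_1<\min\{2s,1+\alpha\}$. I expect the genuine obstacle to be exactly the uniform (in $h\in\S^{n-1}$, and down to $r=0$) continuity estimate for the difference quotient of $\phi$, together with the bookkeeping that upgrades the regularity of $\phi$ available only in a neighborhood of $\Omega_1$ into the globally stated kernel bounds; the reduction itself and the remaining estimates are routine.
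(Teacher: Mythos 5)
Your proof is correct and follows essentially the same route as the paper: define $K(x,r,h)=\bigl(r/|\phi(x)-\phi(x+rh)|\bigr)^{n+2s}$, derive two-sided bounds from the bi-Lipschitz property (taking $\Sigma=\S^{n-1}$), use the fundamental theorem of calculus together with $C^{1,\alpha}$-regularity of $\phi$ to control the variation of $K$, and invoke Theorems~\ref{th:main} and~\ref{th:main:above1}. If anything, your write-up is slightly more careful than the paper's in explicitly verifying the $r\to 0$ part of the continuity hypothesis \eqref{eq:smallness} by comparing difference quotients directly with $D\phi(x_0)h$, whereas the paper only records the H\"older estimate in $x$.
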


Lastly, let us illustrate a consequence of our arguments for the higher regularity theory of the fractional $p$-Laplacian in the regime where $|\nabla u(x_0)| > 0$ for $s \approx 1$.
\begin{corollary}\label{co:plap}
Let $s \in (0,1)$, $p \geq 2$ such that $s > \frac{p-2}{p}$. For a given $\Omega \subseteq \R^n$ open set, let $u \in W^{s,p}\cap L^\infty (\Omega)$ be a solution to
{\small \begin{equation}\label{eq:plapeq}
 \int_{\Omega} \int_{\Omega} \frac{|u(x)-u(y)|^{p-2} (u(x)-u(y))\, (\varphi(x)-\varphi(y))}{|x-y|^{n+sp}}\, dx\, dy = \int_{\Omega} f \varphi dx \quad \forall \varphi \in C_c^\infty(\Omega). 
\end{equation}}
If $f \in C_c^\infty(\R^n)$,  $u$ is continuously differentiable around some point $x_0 \in \Omega$, and $|\nabla u(x_0)| > 0$,  then there exists a small neighborhood $B(x_0,r)$ such that $u \in C^{2}(B(x_0,r))$.
\end{corollary}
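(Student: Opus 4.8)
First I would recast \eqref{eq:plapeq} in the form \eqref{eq:ourpde}. Put $2s':=p(s-1)+2$, so that $n+sp-(p-2)=n+2s'$ and the hypothesis $\tfrac{p-2}{p}<s<1$ is precisely $s'\in(0,1)$ (note also $s'\le s$). Defining $K$ through $K\big(x,|x-y|,\tfrac{x-y}{|x-y|}\big):=\tfrac{|u(x)-u(y)|^{p-2}}{|x-y|^{p-2}}$, equation \eqref{eq:plapeq} becomes $\mathcal L_{K,s'}(u,\varphi)=\int f\varphi\,dx$ in the notation of \eqref{eq:ourpde}. Fix a ball $\Omega_1:=B(x_0,r_0)\subsubset\Omega$ on which $u\in C^1$ and $|\nabla u|\ge c>0$. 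Writing difference quotients of $u$ as averages of $\nabla u$ and using that $t\mapsto|t|^{p-2}$ is uniformly continuous on bounded sets ($p\ge2$), one checks that $K$ is bounded on the relevant range, satisfies the continuity hypothesis \eqref{eq:smallness} at every point of $\Omega_1$ with $K(x_0',0,h)=|\nabla u(x_0')\cdot h|^{p-2}$, and — after choosing $\Sigma:=\{h\in\S^{n-1}:|h\cdot\nabla u(x_0)|\ge\tfrac34|\nabla u(x_0)|\}$ and shrinking $r_0$ — obeys $K(x,0,h)\ge(c/2)^{p-2}$ for $x\in\Omega_1$, $h\in\Sigma$.

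Since $u$ is only $C^1$ near $x_0$ and merely $L^\infty$ away from it, $K$ need not be bounded or elliptic globally, so next I would localise, replacing $K$ by a kernel $\widetilde K$ that agrees with $K$ on a small cylinder around $(x_0,0)$ and equals a harmless bounded elliptic kernel outside of it — e.g.\ $\widetilde K:=\max\{\psi(x)\chi(r)\min\{K,\Lambda_0\},\ (c/2)^{p-2}\mathbf 1_\Sigma(h)\}$ with cutoffs $\psi,\chi$ equal to $1$ near $x_0$ resp.\ $r=0$ and $\Lambda_0>\sup_{\Omega_1}|\nabla u|^{p-2}$. Then $\widetilde K$ satisfies all hypotheses of \Cref{th:main}, and for $\varphi\in C_c^\infty(B(x_0,\rho))$ with $\rho$ small one has $\mathcal L_{\widetilde K,s'}(u,\varphi)=\int f\varphi\,dx+E[\varphi]$, where the error $E[\varphi]=\int\!\!\int(\widetilde K-K)(\cdots)\tfrac{(u(x)-u(y))(\varphi(x)-\varphi(y))}{|x-y|^{n+2s'}}\,dx\,dy$ is supported where $|x-y|\gtrsim\rho$ and hence, using $u\in L^\infty$, satisfies $|E[\varphi]|\lesssim_\rho\|\varphi\|_{L^1}\lesssim\|\varphi\|_{L^{p'}}$. (A further cutoff $u\mapsto\zeta u$ with $\zeta\in C_c^\infty$, $\zeta\equiv1$ near $x_0$, reduces matters to $u\in W^{s',2}(\R^n)$, the far contribution again absorbed into $E$.) Thus the right-hand side $g:=f+E$ obeys $|g[\varphi]|\le\Lambda\|\varphi\|_{L^{p'}}$, and \Cref{th:main} (with $s_2<2s'$ arbitrary) yields $u\in W^{s_1,p}_{\mathrm{loc}}$ near $x_0$ for every $s_1<\min\{2s',1\}$.

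The required smoothness is then reached by a difference-quotient bootstrap. For small $z\in\R^n$ set $D_zu:=|z|^{-1}(u(\cdot+z)-u)$; subtracting the equation for $u(\cdot+z)$ from that for $u$ and using $|a|^{p-2}a-|b|^{p-2}b=(p-1)\big(\int_0^1|b+t(a-b)|^{p-2}dt\big)(a-b)$ shows that $D_zu$ solves $\mathcal L_{\widetilde K_z,s'}(D_zu,\varphi)=g_z[\varphi]$ with $\widetilde K_z$ a truncation of $(p-1)\big(\int_0^1|\cdots|^{p-2}dt\big)r^{-(p-2)}$; the same computation gives $\widetilde K_z(x,r,h)\to(p-1)|\nabla u(x_0)\cdot h|^{p-2}$ uniformly in $h$, uniformly for $|z|$ small, so $\widetilde K_z$ meets the hypotheses of \Cref{th:main} uniformly in $z$, while $|g_z[\varphi]|\le\|D_zf\|_{L^\infty}\|\varphi\|_{L^1}+|E_z[\varphi]|\lesssim\|\varphi\|_{L^{p'}}$ uniformly in $z$ (here $f\in C_c^\infty$). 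Using \Cref{th:main} in quantitative form — the interior $W^{s_1,p}$-seminorm controlled by the $L^p$-norm of the solution on a larger ball plus the data, so that $\|D_zu\|_{L^p}\le\|\nabla u\|_{L^\infty}$ gives a bound uniform in $z$ — one obtains $\sup_{0<|z|<z_0}[D_zu]_{W^{\sigma,p}(B')}<\infty$, hence $u\in W^{1+\sigma,p}_{\mathrm{loc}}$ near $x_0$, for every $\sigma<\min\{2s',1\}$. Iterating with second, third, $\dots$ difference quotients (each further linearisation has, to leading order, the same kernel $(p-1)|\nabla u(x_0)\cdot h|^{p-2}r^{-(p-2)}$) gives, for every fixed $m\in\N$, that $u\in W^{m+\sigma,p}_{\mathrm{loc}}$ near $x_0$ for all $\sigma<\min\{2s',1\}$; taking $m$ with $m+\min\{2s',1\}>2+\tfrac np$, the Sobolev embedding $W^{m+\sigma,p}\hookrightarrow C^2$ gives $u\in C^2(B(x_0,r))$.

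I expect the bootstrap of the third step to be the main obstacle: one must first deploy the Calder\'on--Zygmund estimate of \Cref{th:main} in a form uniform in the difference-quotient step $z$ (so that the limit $z\to0$ is legitimate), and — more seriously — control the lower-order terms created by the \emph{second} and higher difference quotients, namely those obtained by difference-quotienting the coefficient of the once-linearised equation rather than its solution. For $p\ge3$ that coefficient is Lipschitz and these terms are immediately harmless; for $2<p<3$ it is only H\"older continuous, so the corresponding difference quotient blows up and behaves like a nonlocal operator of order $>2s'$, and one has to first upgrade $\nabla u$ to a H\"older exponent $\gamma$ large enough — via Morrey's embedding together with \Cref{th:main:above1}, which applies once $\widetilde K$ is $\alpha$-H\"older with $\alpha=\gamma\min\{1,p-2\}$ — that these terms drop below order $2s'$ in the new difference quotient. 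It is exactly the hypothesis $s>\tfrac{p-2}{p}$, which forces $s$ (hence the order $2s'=p(s-1)+2$) close to the ``second-order'' value $2$, that supplies the room for this. (For $p=2$ the kernel $K$ is constant, \eqref{eq:plapeq} is the linear fractional Laplace equation, and the scheme reduces to classical bootstrapping.)
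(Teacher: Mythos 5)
Your opening moves are exactly the paper's: rewrite \eqref{eq:plapeq} as $\mathcal{L}_{K,s'}(u,\varphi)=\int f\varphi$ with $2s'=sp-p+2$ and $K$ built from the difference quotients of $u$, verify the ellipticity condition \eqref{eq:lambdaangleass} on a cone $\Sigma$ around $\nabla u(x_0)/|\nabla u(x_0)|$ and the continuity \eqref{eq:smallness} from $u\in C^1$, and truncate to a globally admissible kernel. The linearisation identity
$|a|^{p-2}a-|b|^{p-2}b=(p-1)\bigl(\int_0^1|b+t(a-b)|^{p-2}\,dt\bigr)(a-b)$
is precisely the paper's $\widetilde{K}_\tau$ in Proposition~\ref{pr:plaprew}.

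Where you diverge is in the bootstrap, and that is where the gap sits. The paper applies Theorem~\ref{th:main} to the equation for $\delta_\tau u$ at a \emph{fixed} sub-$1$ differentiability order $\alpha-\eps$, $\alpha=\min\{sp-p+2,1\}$, and iterates on the integrability exponent $q_i$ via Sobolev embedding until $q_N=\infty$; it then divides by $|\tau|^s$ to get $u\in C^{s+\alpha-\eps}$, and from there climbs above order $1$ by Theorem~\ref{th:main:above1} — whose proof (Theorem~\ref{th:diffeq}) supplies the commutator estimate for $\laps{r}$ applied to the equation, requiring only H\"older continuity of $K_\tau$, which is available once $u\in C^{1,\gamma}$. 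You instead take second, third, $\dots$ difference quotients, aiming at $W^{m+\sigma,p}_{\mathrm{loc}}$ for arbitrary $m$ and hence $C^\infty$. That is more than the corollary asks, and — as you correctly flag — each further difference quotient hits the coefficient; for $2<p<3$ the map $t\mapsto|t|^{p-2}$ is only $(p-2)$-H\"older near the zero set of $\nabla u(x_0)\cdot h$, so the resulting ``commutator'' is not of lower order and the iteration does not close without an extra input. The fix you sketch (upgrade $\nabla u$ via Morrey, then invoke Theorem~\ref{th:main:above1}) is in fact exactly the route the paper takes, but the paper avoids higher difference quotients altogether and channels all derivative gain above order $1$ through the $\laps{r}$-commutator of Theorem~\ref{th:diffeq}, which is the step your proposal leaves unproved. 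A smaller point: your ``step 5'' (applying Theorem~\ref{th:main} to $u$ itself before any difference quotient) is redundant, since $u\in C^1$ near $x_0$ already gives $u\in W^{1,p}_{\mathrm{loc}}$ there, which is at least as strong as $W^{s_1,p}_{\mathrm{loc}}$ for $s_1<\min\{2s',1\}\le1$.
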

Let us stress a subtlety of \Cref{co:plap}. The idea here is to take the solution $u$ as part of the kernel $ K(x, r, h) = \frac{|u(x)-u(x+rh)|^{p-2}}{|h|^{s(p-2)}}$.  Even if we assume that $|\nabla u(x_0)| \neq 0$, we still might have \[
\frac{|u(x_0)-u(x_0+rh)|}{r} \approx |\nabla u(x_0)\cdot h|\approx 0, 
\]
that for sufficiently small $r>0$, for example, whenever $h$ is perpendicular to $\nabla u(x_0)$. However, we know that for a substantial subset $\Sigma \subset \S^{n-1}$, that depends on $x_0$, we have for all $h \in \Sigma$ that $|\nabla u(x_0) h| \ageq |\nabla u(x_0)|$, namely this is true for all $h \in S^{n-1}$ with $|\angle (h,\nabla u(x_0))| > \lambda > 0$. Condition \eqref{eq:lambdaangleass} on the kernel in \Cref{th:main} will allow us to apply the theorem to handle such cases. The details of \Cref{co:plap} are explained in \Cref{s:co:plap}.

Let us stress that \Cref{co:plap} is an expected result but likely not optimal. The a priori $L^\infty$-condition on $u$ can be waived, indeed in view of \cite{CastroKuusiPalatucciJFA14} we can assume that $u$ is even (H\"older-) continuous. 
Also, it may be possible to remove the conditions $p \geq 2$ and $s>1-\frac{2}{p}$ in \Cref{co:plap} and as well obtain differentiability for the solution above $C^2$. The crucial condition $|\nabla u(x_0)| > 0$ makes the equation ``somewhat uniformly elliptic'' with coefficients which regularize with $u$. All these are the crucial ingredients to bootstrap towards $C^\infty$-regularity -- however, there is some subtlety here that we do not address in this paper: differentiating the equation even for $p>2$ the fractional $p$-Laplacian becomes singular elliptic (even around points where $|\nabla u(x_0)| > 0$). 

Another point to keep in mind with respect to \Cref{co:plap} is that the more challenging case is around points $x_0$ with vanishing derivative, $|\nabla u(x_0)| = 0$ and the (here a priori assumed) $C^1$-regularity cannot be obtained by \Cref{th:main}. Perhaps other techniques such as those developed by Brasco--Lindgren \cite{BL17,BLS18} may be needed. 

The statement of \Cref{co:plap} is also somewhat motivated by the recent extention formula for the (sublinear, $p<2$) $p$-Laplacian obtained in \cite{dTGCV20} -- where they assumed $u \in C^2$ and $|\nabla u(x_0)| > 0$. While our arguments do not immediately apply for $p < 2$ we hope that our techniques have some use for this equation to remove the (very strong) $C^2$-assumption.

{We conclude the introduction by describing the organization of the paper as well as the approach we follow to prove the main result. We use a perturbation argument to obtain the interior regularity estimate stated in  \Cref{th:main}. We compare the nonlocal equation of interest locally with a nonlocal equation on the $n$-torus whose solutions are periodic functions with better regularity. Results pertaining to regularity theory to solutions to nonlocal equation on the $n$-torus are presented in \Cref{s:hongjidong}.}

\subsection*{Acknowledgments}
M.M. F. is funded  by the Alexander von Humboldt foundation. T.M. is funded by National Science Foundation (NSF), grant no 1910180. A.S. is funded by the NSF Career award DMS-2044898 and Simons foundation grant no 1910180.

\section{The torus and periodic maps - Dong-Kim's regularity result}\label{s:hongjidong}

{To facilitate the perturbation argument we use to obtain the main result,  we develop 
regularity theory for nonlocal PDE with convolution-type kernel on the periodic domain the $n$-torus which is the cube $\bigg[{-\frac{1}{2}},{\frac{1}{2}}\bigg]^n$ with opposite sides identified. The regularity results we obtain in this section are a reformulation of the results obtained in \cite{DONG20121166, Mikulevicius2014}  where a regularity theory is developed for nonlocal equations over $\R^n$. }


First we fix some notations. The $n$-torus $\T^n=\bigg[{-\frac{1}{2}},{\frac{1}{2}}\bigg]^n$ is given as the set of equivalence class $\R^n / \sim$ where the equivalence relation $\sim$ on $\R^n$ defined as follows: we say that, for $x,y \in \R^n$, $x \sim y$ if $x-y \in \Z^n$. With this identification, periodic functions on $\T^n$ are characterized as $f$ defined on $\R^n$ and satisfy $f(x+m) = f(x)$ for all $x \in \R^n$ and $m \in \Z^n$. The space of infinitely differentiable functions on $\T^n$ denoted by $C^\infty(\T^n)$. For $1\leq p \leq \infty$, $f$ is in $L^{p}(\T^n)$ if $\|f\|_{L^{p}(\T^n)} < \infty. $ 
For a complex-valued function $f$ in $L^1(\T^n)$, we define the Fourier transform of $f$, $\mathcal{F}(f)$,  and for $m \in \Z^n$  as 
\[
\mathcal{F}(f)(m) = \int_{\T^n} f(x) e^{-2\pi \imath\langle m, x\rangle}\, dx, 
\]
where $\langle m, x\rangle = \sum_{i=1}^{n}m_ix_i$ is the dot product of $m$ and $x$. 
The inverse Fourier transform (also known as Fourier series of $f$) at $x \in \T^n$ is the series
\[
 \sum_{m \in \Z^n} \mathcal{F}(f)(m) e^{2\pi \imath\langle m, x\rangle}.
\] 
We can define other operators on function spaces of $\T^n$-periodic functions. 
For example, for $s>0$, we define the fractional Laplacian on $\T^n$, $\Delta_{\T^n}^{s/2}$,   via the Fourier series as 
\[
\Delta_{\T^n}^{{\frac{s}{2}}}f(x) = \sum_{m\in\Z^n}|m|^{s} \mathcal{F}(f)(m) e^{2\pi \imath\langle m,x \rangle}. 
\]
See \cite{RS16} for more on the fractional Laplacian on $\T^n$. 

For $1<p<\infty$ and $s>0$ we define the inhomogeneous Sobolev and Bessel potential spaces on the torus are defined, respectively, as 
\[
W^{s,p}(\T^n)=\left\{u\in L^{p}(\T^n): |u|_{W^{s,p}(\T^n)}^{p}=\int_{\T^n}\int_{\T^n} {\frac{|u(x)-u(y)|}{|x-y|^{d+ps}}} dxdy < \infty\right\}
\]
and
\[
H^{s,p}(\T^n)=\left\{u\in L^{p}(\T^n):  |u|_{\dot{H}^{s,p}(\T^n)}= \|\Delta_{\T^n}^{{\frac{s}{2}}}u\|_{L^{p}(\T^n)} < \infty \right\}.
\]
For $p=2$, $W^{s,2}(\T^n) = H^{s,2}(\T^n)$, which can be shown via Plancheral Theorem. We denote $H^{s,2}(\T^n)$ just simply by $H^s(\T^n)$. The homogeneous Bessel potential space $\dot{H}^{s,p}(\T^n)$ on the torus collects all periodic distributions $u$ such that $|u|_{\dot{H}^{s,p}(\T^n)}= \|\Delta_{\T^n}^{{\frac{s}{2}}}u\|_{L^{p}(\T^n)} < \infty$.   
We should mention the Sobolev inequality for potential spaces: for $s>0$ and $1<p<q<\infty$ such that ${\frac{s}{n}} \geq  {\frac{1}{p}} - {\frac{1}{q}}$, we have 
\[
\|u-(u)_{\T^n}\|_{L^{q}(\T^n)} \leq C | u|_{\dot{H}^{s, p}(\T^n)},
\]
where $C= C(s, n, p, q)>0$ and $(\cdot)_{\T^n}$ represents the average operator, see \cite{Benyi-Oh2013}. 
%
%
%
%


The main result of this section is stated in the following theorem which obtains a Calderon-Zygmond type result for nonlocal equations on the torus with kernels that are homogeneous of degree zero.  We denote by $\S^{n-1} \subset \R^n$ the $n-1$ sphere. 
\begin{theorem}\label{th:trinv}
Let  $\eta>0$, $s_1+s_2 = 2s$, $s_2 < s$, and $s_1,s_2,s\in(0,1)$. Suppose that $p \in (1,\infty)$ and $\Sigma \subset\S^{n-1}$ such that $|\Sigma|>0$. 
Let $K : \S^{n-1} \to [0,\Lambda]$ be a nonnegative, bounded and measurable function, such that 
\begin{equation}\label{eq:anglecond}
0< \eta := \inf_{\Sigma} K.
\end{equation}
Then there exists a constant $C = C(\eta,s_1,s_2,s,\Sigma)>0$ such that the following holds.

Then for any  $g$ with $\|g\|_{W^{-s_2,p}(\R^n)} \equiv \|g\|_{\brac{W^{s_2,p'}(\R^n)}^\ast}  \aleq \Lambda$, i.e.
$
 |g[\varphi]| \leq \Lambda\, [\varphi]_{W^{s_2,p'}(\R^n)},
$
and a solution $u \in W^{s,2}(\T^n)$ to the equation
\[
 \int_{\T^n}\int_{\T^n} \mu\brac{x-y} (u(x)-u(y))(\varphi(x)-\varphi(y))\, dx\, dy = g[\varphi] \quad \forall \varphi \in C^\infty(\T^n)
\]
with 
\[
 \mu(h) := \sum_{m \in \Z^n} \frac{K(\frac{h+m}{|h+m|})}{|h+m|^{n+2s}}, 
\]
we have  $u\in W^{s_1,p}(\T^n)$. Moreover,  we have the estimate
\[
 [u]_{W^{s_1,p}(\T^n)} \leq C\, \Lambda+\|u\|_{L^2(\T^n)}.
\]
\end{theorem}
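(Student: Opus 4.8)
The plan is to recast the periodic equation as an equation on $\R^n$ and then invoke the Calderon--Zygmund theory of Dong--Kim \cite{DONG20121166} (and Mikulevicius--Pragarauskas \cite{Mikulevicius2014}) for nonlocal operators with homogeneous, $x$-independent kernels. First I would observe that the kernel $\mu(h)$ is exactly the periodization of $k(z) := K(z/|z|)|z|^{-n-2s}$, i.e. $\mu(h) = \sum_{m\in\Z^n} k(h+m)$, so that the bilinear form on $\T^n$ coincides with the bilinear form on $\R^n$ attached to the kernel $k$ when tested against periodic functions: for $\varphi \in C^\infty(\T^n)$,
\[
 \int_{\T^n}\int_{\T^n} \mu(x-y)(u(x)-u(y))(\varphi(x)-\varphi(y))\,dx\,dy = \int_{\R^n}\int_{\R^n} k(x-y)(u(x)-u(y))(\varphi(x)-\varphi(y))\,dx\,dy,
\]
where on the right $u$ is viewed as a $\Z^n$-periodic function. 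The kernel $k$ is homogeneous of degree $-(n+2s)$, even (since we may symmetrize $K$ without changing the form), satisfies the upper bound $k(z)\le \Lambda|z|^{-n-2s}$, and — crucially — is elliptic in the sense required by Dong--Kim because of the cone positivity \eqref{eq:anglecond}: the lower bound $\inf_\Sigma K \ge \eta > 0$ guarantees that the associated Dirichlet form controls the full $\dot H^{s,2}$-seminorm, since $\int_{\S^{n-1}} |\xi\cdot\theta|^{2s}\,K(\theta)\,d\theta \gtrsim_{\eta,\Sigma} |\xi|^{2s}$ uniformly in $\xi$ (any direction $\xi$ makes a nondegenerate angle with a positive-measure subset of $\Sigma$). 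This is the step that lets us absorb the "degenerate in some directions" feature into a genuinely elliptic estimate.

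Next I would set up the right scaling of the $W^{s,p}$-estimate. Writing $s_1 + s_2 = 2s$ with $s_2 < s < s_1$, the operator $L_k$ maps $\dot H^{s_1,p}(\R^n)$ (equivalently, by the homogeneity of the kernel, functions of that smoothness) into $\dot W^{-s_2,p}(\R^n) = (\dot W^{s_2,p'})^*$; the Dong--Kim theorem (after the reformulation on the torus, which removes the lower-order/zeroth-order terms appearing in their $\R^n$ statement and handles the compactness of $\T^n$) yields precisely the a priori estimate
\[
 [u]_{W^{s_1,p}(\T^n)} \lesssim \|g\|_{W^{-s_2,p}(\R^n)} + \|u\|_{L^2(\T^n)},
\]
with the constant depending only on $\eta, s_1, s_2, s, \Sigma, p, n$. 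Here the $L^2$-term on the right is the usual lower-order term one keeps because the homogeneous seminorm does not see constants and because one first needs to know $u\in L^p$; on the torus, $W^{s,2}\hookrightarrow L^2$ and a bootstrap (interpolating the a priori higher integrability coming from the $g$-bound against the given $W^{s,2}$-regularity) upgrades $u$ from $W^{s,2}$ to $W^{s_1,p}$. The passage from "a priori estimate for smooth solutions" to "the estimate holds for the given $W^{s,2}$-solution" is done by the standard regularization/approximation on the torus (mollify, solve, pass to the limit), using uniqueness of finite-energy solutions up to additive constants which follows from the coercivity established above.

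The main obstacle I expect is not the elliptic estimate itself — that is essentially the content of \cite{DONG20121166, Mikulevicius2014} — but rather the faithful translation between the $\R^n$-results in those references and the periodic setting: one must check that periodization of the kernel is compatible with their structural hypotheses (the regularity-in-the-angular-variable and nondegeneracy conditions, stated for kernels on $\R^n$, must be verified for $k$, and the fact that $\mu$ is only the periodized kernel — not homogeneous — must not matter because we only ever test against periodic functions), and one must account for the fact that $L^p$-multiplier theory on $\T^n$ versus $\R^n$ requires either a transference argument or a direct Fourier-series estimate. A secondary point is bookkeeping the dependence of the constant: we claim it depends on $\Sigma$ only through $\eta$ and the lower angular bound, which requires the uniform coercivity inequality above to be stated with a constant depending on $|\Sigma|$ and $\eta$ alone. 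Once these translation issues are handled, the $W^{s_1,p}$-conclusion and its estimate follow directly.
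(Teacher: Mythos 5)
Your plan identifies the right ingredients at the level of ``ellipticity from the cone condition'' and ``invoke Dong--Kim,'' but it skips the actual bridge that the theorem requires, and this is where the paper's argument lives.

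What Dong--Kim (and the paper's Proposition~\ref{pr:hd}, proved by ``almost verbatim arguments'') actually give is the estimate $\|\laps{2s}u\|_{L^p(\T^n)} \lesssim \|g\|_{L^p(\T^n)}$, i.e.\ the solution operator $T$ maps $L^p \to \dot{H}^{2s,p}$. The statement you need is that $T$ maps $W^{-s_2,p} = B^{-s_2}_{p,p}$ into $W^{s_1,p} = B^{s_1}_{p,p}$. You assert that Dong--Kim ``yields precisely'' this, but it does not, and the gap between the two statements is genuine: for $p\neq 2$ the Sobolev--Slobodeckij scale $W^{\sigma,p}=B^{\sigma}_{p,p}$ is not the Bessel scale $H^{\sigma,p}=F^{\sigma}_{p,2}$, so you cannot obtain the $B_{p,p}$ conclusion from a single $H^{2s,p}$ estimate by Sobolev embedding or ``bootstrap against the $W^{s,2}$ regularity'' (this last step, as you phrase it, does not correspond to a standard interpolation of spaces). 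The paper closes the gap in two steps you do not supply: first, Lemma~\ref{la:differentiating} exploits that the frozen kernel is translation-invariant to show $\laps{\sigma}$ commutes with the equation, turning the single $L^p\to\dot{H}^{2s,p}$ bound into the one-parameter family $T\colon \dot{H}^{\sigma,p}\to\dot{H}^{2s+\sigma,p}$ for all $\sigma\in\R$; second, real interpolation between two such endpoints (e.g.\ $\sigma_0 = -s$ and $\sigma_1 = 0$) produces boundedness on the Besov scale $B^{\sigma}_{p,p}\to B^{2s+\sigma}_{p,p}$ for $\sigma$ strictly between the endpoints, which after setting $\sigma=-s_2$ is exactly the claimed $W^{-s_2,p}\to W^{s_1,p}$ bound. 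Without the commutation lemma you only have one point on the $\sigma$-axis and nothing to interpolate; without the $(F_{p,2},B_{p,p})$ real-interpolation step you land in the wrong scale. These are the new ingredients of the proof, not bookkeeping.

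A secondary issue: the identity you write in the first display, equating the torus bilinear form to the $\R^n\times\R^n$ bilinear form with kernel $k$ and both $u,\varphi$ periodic, is false --- the right-hand side diverges, since a nonzero periodic integrand integrated over $\R^n\times\R^n$ is infinite. The correct periodization identity unfolds only one of the two variables: $\int_{\T^n}\int_{\T^n}\mu(x-y)\cdots\,dx\,dy = \int_{\T^n}\int_{\R^n}k(x-y)\cdots\,dx\,dy$. This slip is fixable, but the symbol-lower-bound computation that you want to extract from it must still be carried out on the torus via Fourier series (as in Lemma~\ref{la:ftintegralcond-Cone} and the subsequent Plancherel argument), not by naive transference.
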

Our proof of the theorem follows that of  a similar result proved by H. Dong and D. Kim in \cite{DONG20121166} for nonlocal equations posed on $\mathbb{R}^{n}$. There are some distinctions however with our setup no matter how minor they are. First, the nonlocal equation we study is posed over the torus. Second, we have a relaxed ellipticity condition, namely,  ellipticity of $K$ is assumed over subset of  $\mathbb{S}^{d}$ with positive measure as opposed to the whole of  $\mathbb{S}^{d}$. As the following lemma demonstrates, this is enough to establish the ellipticity of the associated operator. 
\begin{lemma}\label{la:ftintegralcond-Cone}
Let $K: \S^{n-1} \to [0,1]$ be measurable and satisfy \eqref{eq:anglecond} with $\Sigma \subset \S^{n-1}$ and $|\Sigma|>0$.
Then, for $R,R'>0$, with $RR'\geq \overline r>0$ there exists $c=c(n,s,\Sigma,\overline r)>0$ such that  for all $\xi \in \R^N\setminus B (0, R')$, 
\begin{equation} \label{eq:low-Cone-BR}
\int_{B(0, R)}\frac{1-\cos(2\xi\cdot h)}{|h|^{n+2s}}K(h/|h|)\, dh\geq c\, \eta |\xi|^{2s}.
\end{equation}
If $R=+\infty$ and $R'=0$, then \eqref{eq:low-Cone-BR} holds with $\overline r=1$.
In particular, for any $k \in \Z^n$, we have
$$
 \int_{\T^n} (1-\cos(2\pi \langle k, h\rangle )) \sum_{m\in \Z^n}\frac{K\left(\frac{h+m}{|h+m|}\right)}{|h+m|^{n+2s}}\, dh \geq  c(n,s,\Sigma) \eta |k|^{2s}.
$$
\end{lemma}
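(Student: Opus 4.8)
\emph{Proof plan.} The plan is to discard all of the kernel except its restriction to the cone over $\Sigma$, pass to polar coordinates, and reduce \eqref{eq:low-Cone-BR} to a one-dimensional scaling estimate; the periodic assertion will then follow by unfolding the sum over $\Z^n$.

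First I would use that $0\le K\le 1$ and $K\ge\eta$ on $\Sigma$ (by \eqref{eq:anglecond}) to get $K(h/|h|)\ge \eta\,\bbbone_{\{h/|h|\in\Sigma\}}$; writing $h=r\omega$ with $r>0$, $\omega\in\S^{n-1}$ and using that $1-\cos$ is even, the left-hand side of \eqref{eq:low-Cone-BR} is then at least
\[
\eta\int_{\Sigma}\int_0^R\frac{1-\cos\!\big(2r\,\xi\cdot\omega\big)}{r^{1+2s}}\,dr\,d\sigma(\omega).
\]
The hard part is a uniform-in-direction control of how much of $\Sigma$ is \emph{not} almost orthogonal to $\xi$: for a unit vector $e$ the band $\{\omega\in\S^{n-1}:|\omega\cdot e|<\delta\}$ has surface measure $\beta(\delta)$ independent of $e$ (by rotational invariance) with $\beta(\delta)\to 0$ as $\delta\to 0$, so I would fix $\delta_0=\delta_0(n,\Sigma)\in(0,1)$ with $\beta(\delta_0)<\tfrac12|\Sigma|$ and set $\Sigma_e:=\{\omega\in\Sigma:|\omega\cdot e|\ge\delta_0\}$, which satisfies $|\Sigma_e|\ge\tfrac12|\Sigma|$ for every unit $e$. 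This is precisely the step that makes the \emph{positive-measure} ellipticity \eqref{eq:anglecond}, rather than full-measure positivity, still produce the $|\xi|^{2s}$ growth.

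Then, taking $e=\xi/|\xi|$ (note $R'>0$ forces $\xi\ne0$), discarding $\Sigma\setminus\Sigma_e$, and writing $a:=|\xi\cdot\omega|\ge\delta_0|\xi|$ for $\omega\in\Sigma_e$, the substitution $\rho=ra$ gives
\[
\int_0^R\frac{1-\cos(2ra)}{r^{1+2s}}\,dr=a^{2s}\int_0^{Ra}\frac{1-\cos(2\rho)}{\rho^{1+2s}}\,d\rho,
\]
where the integrand is nonnegative, $\sim 2\rho^{1-2s}$ near $0$ (integrable since $2s<2$) and $O(\rho^{-1-2s})$ at infinity. Since $|\xi|\ge R'$ and $RR'\ge\bar r$, I have $Ra\ge\delta_0 RR'\ge\delta_0\bar r$, so by monotonicity of $A\mapsto\int_0^A$ the last integral is at least $c_1:=\int_0^{\delta_0\bar r}\tfrac{1-\cos2\rho}{\rho^{1+2s}}\,d\rho>0$ (depending only on $n,s,\Sigma,\bar r$); combining with $a^{2s}\ge\delta_0^{2s}|\xi|^{2s}$ and $|\Sigma_e|\ge\tfrac12|\Sigma|$ yields \eqref{eq:low-Cone-BR} with $c=\tfrac12|\Sigma|\,\delta_0^{2s}c_1$. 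For $R=+\infty$, $R'=0$ the same computation works with $\int_0^{Ra}$ replaced by $\int_0^\infty\tfrac{1-\cos2\rho}{\rho^{1+2s}}\,d\rho=:c_0(s)>0$, which needs no lower bound on $Ra$ at all, giving the statement with $\bar r=1$ and constant $c(n,s,\Sigma)$.

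Finally, for $k,m\in\Z^n$ one has $\langle k,m\rangle\in\Z$, hence $1-\cos(2\pi\langle k,h\rangle)=1-\cos(2\pi\langle k,h+m\rangle)$; since $\{h+m:h\in\T^n,\ m\in\Z^n\}$ tiles $\R^n$, Tonelli (the integrand is nonnegative) lets me unfold
\[
\int_{\T^n}\big(1-\cos(2\pi\langle k,h\rangle)\big)\sum_{m\in\Z^n}\frac{K\!\big(\tfrac{h+m}{|h+m|}\big)}{|h+m|^{n+2s}}\,dh=\int_{\R^n}\big(1-\cos(2\langle\pi k,y\rangle)\big)\frac{K(y/|y|)}{|y|^{n+2s}}\,dy,
\]
and applying the case $R=+\infty$, $R'=0$ with $\xi=\pi k$ bounds the right-hand side below by $c\,\eta\,|\pi k|^{2s}=c(n,s,\Sigma)\,\eta\,|k|^{2s}$, which is the last claim.
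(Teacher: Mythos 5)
Your proof is correct and reaches the same conclusion, but via a genuinely different mechanism in the key step. After the identical initial reductions (throw away $K$ outside $\Sigma$, go to polar coordinates), the paper bounds $1-\cos(2r\,\sigma\cdot\theta)$ from below by a Taylor expansion ($\geq \tfrac{r^2}{4}|\sigma\cdot\theta|^2$ for $r<r_0$) and then shows that $f(\sigma):=\int_\Sigma|\sigma\cdot\theta|^2\,d\theta$ has a strictly positive minimum on $\S^{n-1}$ by compactness and the fact that $f(\sigma_0)=0$ would force $\Sigma\subset\sigma_0^\perp$, contradicting $|\Sigma|>0$. You instead never touch the cosine itself: you carve out the uniformly non-orthogonal subset $\Sigma_e=\{\omega\in\Sigma:|\omega\cdot e|\ge\delta_0\}$ using rotational invariance of the band measure $\beta(\delta)$, then perform the exact one-dimensional scaling $\rho=r|\xi\cdot\omega|$ and exploit monotonicity of $A\mapsto\int_0^A(1-\cos2\rho)\rho^{-1-2s}\,d\rho$. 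The trade-off is mild: the paper's route never needs to discard part of $\Sigma$ and is a bit shorter, while yours produces a fully explicit constant $c=\tfrac12|\Sigma|\delta_0^{2s}c_1$ without appealing to extremization on the sphere and makes the $R=+\infty$, $R'=0$ case an immediate byproduct rather than a separate remark. The periodic assertion is handled identically in both, by unfolding the $\Z^n$-sum over $\T^n$ to an integral over $\R^n$ with $\xi=\pi k$ via Tonelli.
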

\begin{proof}
By \eqref{eq:anglecond}, for $\xi \in \R^N\setminus B (0, R')$, we have 
\begin{align}
\int_{B(0, R)}\frac{1-\cos(2\xi\cdot h)}{|h|^{n+2s}}K(h/|h|)\, dh&= \int_{\S^{N-1}}\int_0^R\frac{1-\cos(2\xi\cdot t\theta)}{t^{1+2s}}K(\theta)\, d\theta dt \nonumber\\
&= |\xi|^{2s}\int_{\S^{N-1}}\int_0^{|\xi| R}\frac{1-\cos(2\xi/|\xi|\cdot r \theta)}{r^{1+2s}}K(\theta)\, d\theta dr \nonumber\\
&\geq  \eta  |\xi|^{2s}\int_{\Sigma}\int_0^{R'R}\frac{1-\cos(2\xi/|\xi|\cdot r \theta)}{r^{1+2s}} \, d\theta dr \nonumber\\
&\geq  \eta  |\xi|^{2s}\int_{\Sigma}\int_0^{\overline r}\frac{1-\cos(2\xi/|\xi|\cdot r \theta)}{r^{1+2s}} \, d\theta dr. \label{eq:low-Cone}
\end{align}
From the Taylor expansion of cosine, we can find a  constant $r_0\in (0, \overline r)$ such that 
 \begin{equation}\label{eq:1-cos}
1-\cos(2\sigma \cdot r \theta)\geq r^2 \frac{|\sigma \cdot  \theta|^2}{4} \qquad\textrm{ for all $\sigma,\theta\in \S^{n-1}$ and $r\in (0, r_0)$}.
\end{equation}
Moreover, it is clear that   the map 
$$
 \S^{n-1}\to \R, \qquad \sigma \mapsto f(\sigma):= \int_{\Sigma}  {|\sigma \cdot  \theta|^2}  \, d\theta 
$$
attains its  minimum at some $\sigma_0\in \S^{n-1}$. It is plain that $f(\sigma_0)>0$, because otherwise  $\sigma \cdot  \theta=0$ for all $\theta\in \Sigma$, which contradicts that $|\Sigma|>0$.
Using this and \eqref{eq:1-cos}  in \eqref{eq:low-Cone}, we obtain
\begin{align*}
\int_{B(0,R)}\frac{1-\cos(2\xi\cdot h)}{|h|^{n+2s}}K(h/|h|)\, dh 
&\geq  \eta  \frac{|\xi|^{2s}}{4}\int_{\Sigma}\int_0^{r_0}\frac{|\xi/|\xi|\cdot  \theta|^2}{r^{-1+2s}} \, d\theta dr  \geq \frac{ \eta r_0^{2-2s}f(\sigma_0)}{8(1-s)} |\xi|^{2s}.
\end{align*}
This gives \eqref{eq:low-Cone-BR}.
\end{proof}

\begin{lemma}
With $K$ and $\mu$ as in \Cref{th:trinv}, let \[
	Lu(x) := \int_{\T^n} (u(x) - u(y))\,\mu(x-y)\, dy.
	\]
	Let $u \in W^{s,2}(\T^n)$ satisfing $Lu = g$ in the distributional sense in $\T^n$, where $g \in L^2(\T^n)$. Then 
	\[
	\|u\|_{H^{2s}(\T^n)} \aleq \|g\|_{L^2(\T^n)}.
	\]
\end{lemma}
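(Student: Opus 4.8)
The plan is to work entirely on the Fourier side, exploiting that $L$ is a Fourier multiplier on $\T^n$. First I would compute the symbol of $L$: for $u(x) = e^{2\pi\imath\langle k,x\rangle}$ one has
\[
Lu(x) = \left(\int_{\T^n} (1-e^{-2\pi\imath\langle k,h\rangle})\,\mu(h)\,dh\right) e^{2\pi\imath\langle k,x\rangle},
\]
and symmetrizing in $h \mapsto -h$ (using that $\mu$ is even, which follows since $K$ is evaluated on $\S^{n-1}$ through $\pm$ and the lattice sum is symmetric) the multiplier is the real number
\[
m_L(k) = \int_{\T^n} (1-\cos(2\pi\langle k,h\rangle))\,\mu(h)\,dh.
\]
By the last displayed inequality in \Cref{la:ftintegralcond-Cone}, $m_L(k) \geq c(n,s,\Sigma)\,\eta\,|k|^{2s}$ for all $k \in \Z^n$, and of course $m_L(k) \leq C\Lambda |k|^{2s}$ from the upper bound $K \leq \Lambda$ (splitting the $h$-integral into $|h| \lesssim 1/|k|$ and the complement, or just using $1-\cos \leq \min\{2,(2\pi\langle k,h\rangle)^2\}$ against $\mu(h) \lesssim |h|^{-n-2s}$ near $0$). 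So $m_L(k) \approx |k|^{2s}$ with constants depending only on $n,s,\Sigma,\eta,\Lambda$.

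Next I would read off the equation $Lu = g$ on the Fourier side: for each $k \neq 0$, $m_L(k)\,\mathcal{F}(u)(k) = \mathcal{F}(g)(k)$, hence $|k|^{2s}|\mathcal{F}(u)(k)| \lesssim |\mathcal{F}(g)(k)|$. Squaring, multiplying by $|k|^{2s}$ again — wait, more directly: $\||k|^{2s}\mathcal{F}(u)\|_{\ell^2} \lesssim \|\mathcal{F}(g)\|_{\ell^2}$, which by Plancherel on $\T^n$ says $\|\Delta_{\T^n}^{s}u\|_{L^2} = |u|_{\dot H^{2s}(\T^n)} \lesssim \|g\|_{L^2(\T^n)}$. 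For the zero mode, note $m_L(0)=0$, so the equation forces $\mathcal{F}(g)(0) = 0$ (the solvability/compatibility condition); one then controls $\mathcal{F}(u)(0) = (u)_{\T^n}$ either by hypothesis or by subtracting the mean, and combines with the homogeneous estimate to get the full inhomogeneous norm $\|u\|_{H^{2s}(\T^n)} \lesssim \|g\|_{L^2(\T^n)}$ (absorbing $\|u\|_{L^2}$, or noting the statement's $\aleq$ tolerates the lower-order term — here I would just state $\|u\|_{H^{2s}} \aleq \|g\|_{L^2} + \|u\|_{L^2}$ if the mean is not otherwise normalized, matching how such lemmas are typically applied).

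The one genuine point requiring care — and the main (mild) obstacle — is justifying that $u \in W^{s,2}(\T^n)$ together with the \emph{distributional} identity $Lu = g$ actually yields the pointwise Fourier relation $m_L(k)\mathcal{F}(u)(k) = \mathcal{F}(g)(k)$, i.e. that $Lu$ is well-defined and its Fourier coefficients are $m_L(k)\mathcal{F}(u)(k)$. This is where the a priori regularity $s \in (0,1)$ (so that $u \in W^{s,2}$ makes $Lu$ a genuine distribution acting on $C^\infty(\T^n)$ via the bilinear form) and the growth $m_L(k) \lesssim |k|^{2s} \lesssim |k|^2$ on $\ell^2$-sequences of finite $\dot H^s$ norm come in: testing the bilinear form against $\varphi = e^{2\pi\imath\langle k,\cdot\rangle}$ (approximated by trigonometric polynomials), Fubini is legitimate because $\mu(h)(u(x)-u(y))(\varphi(x)-\varphi(y))$ is absolutely integrable when $u \in W^{s,2}$ and $\varphi$ smooth — this is the standard computation identifying the quadratic form with the multiplier. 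Once that identification is in hand, everything else is a two-line Plancherel argument. I would keep this paragraph short, citing the boundedness $K \leq \Lambda$ and \Cref{la:ftintegralcond-Cone} as the only inputs.
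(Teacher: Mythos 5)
Your proof takes essentially the same route as the paper: identify $L$ as a Fourier multiplier, invoke \Cref{la:ftintegralcond-Cone} for the lower bound on the symbol, and conclude by Plancherel. The only point to tighten is your justification that $\mu$ is even --- this does not follow from the lattice sum alone (it would require $K(\theta)=K(-\theta)$), though it holds WLOG since the bilinear form is symmetric in $x\leftrightarrow y$ and hence only sees $\tfrac12\bigl(K(\theta)+K(-\theta)\bigr)$; the paper sidesteps the issue entirely by using $|m_L(k)|\geq|\Re\, m_L(k)|$ before invoking the cosine lower bound.
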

\begin{proof}
	First, we consider the Fourier transform of $Lu$ on torus. Observe that for $u \in W^{s,2}(\T^n)$ the expression $Lu$ is a distibution on $\T^n$ - in particular $\F\, Lu(k)$ is well-defined since $e^{-2\pi \imath \langle k,x \rangle} \in C^\infty(\T^n)$. 
	Then
	{\small \[
	\begin{split}
	\F(Lu)(k) &= \left \langle Lu(x),e^{-2\pi \imath \langle k,x \rangle}\right \rangle= \frac{1}{2} \int_{\T^n} \int_{\T^n} (u(x) - u(y))\,\mu(x-y) \brac{e^{-2\pi \imath \langle k,x \rangle}-e^{-2\pi \imath \langle k,y \rangle}}\, dy\,dx.
	\end{split}
	\]}
	Changing variable $x-y = h$, we obtain
	\[
	\begin{split}
	\F(Lu)(k) &= \frac{1}{2}\int_{\T^n} \int_{\T^n} (u(x) - u(x-h))\,\mu(h)\, \brac{e^{-2\pi \imath \langle k,x \rangle}-e^{-2\pi \imath \langle k,x-h \rangle}}\, dx\,dh\\
&=	\int_{\T^n} \int_{\T^n} (u(x) - u(x-h))\,\mu(h)\, e^{-2\pi \imath \langle k,x \rangle}\, dx\,dh\\
	&=\F(u)(k) \int_{\T^n} (1-e^{2\pi \imath \langle k,h \rangle})\,\mu(h)\,dh.
	\end{split}
	\]
	By Plancherel's identity, we get
	\[
	\begin{split}
	\|Lu\|_{L^2(\R^n)}^2 &= \sum_{k \in \Z^n} |\F(Lu)(k)|^2\\
	&= \sum_{k \in \Z^n} |\F(u)(k)|^2 \left|\int_{\T^n} (1-e^{2\pi \imath \langle k,h \rangle})\,\mu(h)\,dh\right|^2 \\
	&\ge  \sum_{k \in \Z^n} |\F(u)(k)|^2 \left|\Re\int_{\T^n} (1-e^{2\pi \imath \langle k,h \rangle})\,\mu(h)\,dh\right|^2 \\
	&= \sum_{k \in \Z^n} |\F(u)(k)|^2 \left(\int_{\T^n} (1-\cos (2\pi \langle k,h \rangle))\,\mu(h)\,dh\right)^2 \\
	&= \sum_{k \in \Z^n} |\F(u)(k)|^2 \left(\int_{\T^n} (1-\cos (2\pi \langle k,h \rangle))\,\sum_{m \in \Z^n} \frac{K({\frac{h+m}{|h+m|}})}{|h+m|^{n+2s}}\,dh\right)^2 .
	%
	\end{split}
	\]
	In view of \Cref{la:ftintegralcond-Cone}, we find that 
	\[
	\|g\|_{L^2(\T^n)}^2 =  
	 \|Lu\|_{L^2(\T^n)}^2 \ageq \sum_k ||k|^{2s} \F(u)(k)|^2 = \|u\|_{H^{2s}(\T^n)}^2.
	\]
%
%

%
\end{proof}

With these above lemmata we obtain by almost verbatim arguments as in \cite{DONG20121166} the following result. As for the H\"older regularity result, we sketch in \Cref{s:hoeldercont} an alternative argument.
\begin{proposition} \label{pr:hd} Let $u,\mu,g$ be as in \Cref{th:trinv}. Then,
	\[
	\|\laps{2s}_{\T^n} u\|_{L^p(\T^n)} \aleq \|g\|_{L^p(\T^n)}.
	\]
\end{proposition}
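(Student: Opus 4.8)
The plan is to establish the $L^p$ Calderon-Zygmund estimate $\|\laps{2s}_{\T^n} u\|_{L^p(\T^n)} \aleq \|g\|_{L^p(\T^n)}$ by following the scheme of Dong--Kim in \cite{DONG20121166}, which is itself a Fefferman--Stein / Calderon-Zygmund perturbation argument adapted to the convolution operator $L$. The starting point is the $L^2$-result proven in the preceding lemma: $\|u\|_{H^{2s}(\T^n)} \aleq \|g\|_{L^2(\T^n)}$, which is exactly the endpoint case $p=2$. The key observation is that by periodicity the operator $L$ is a Fourier multiplier on $\T^n$ with symbol $\sigma(k) = \int_{\T^n}(1-\cos(2\pi\langle k,h\rangle))\mu(h)\,dh$, and \Cref{la:ftintegralcond-Cone} gives the two-sided bound $c\,\eta |k|^{2s} \le \sigma(k) \le C\Lambda|k|^{2s}$ (the upper bound coming from $K\le\Lambda$ and the standard estimate $1-\cos(2\pi\langle k,h\rangle)\aleq |k|^2|h|^2$ near the singularity together with boundedness away from it). Thus $L$ behaves like $\laps{2s}_{\T^n}$ up to comparable multipliers, and the quantity $\laps{2s}_{\T^n} u = \laps{2s}_{\T^n} L^{-1} g$ is the image of $g$ under a multiplier operator of order zero.

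First I would set up the local oscillation estimate that is the heart of the Calderon-Zygmund argument: for a solution of $Lu = g$ on a ball, with $g$ supported (or small) outside a smaller ball, one decomposes $u = v + w$ where $v$ solves $Lv = 0$ with the same exterior data and $w = u - v$. Using translation-invariance of $L$ one differentiates the homogeneous equation (or uses the $L^2$ Caccioppoli-type estimate plus the multiplier structure) to obtain higher interior integrability/regularity of $v$ — specifically an $L^\infty$ or oscillation decay bound for $\laps{2s}_{\T^n} v$ on the inner ball controlled by its $L^2$-average on the outer ball. The remainder $w$ is controlled in $L^2$ by the local $L^2$-theory applied to the inhomogeneity. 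Combining, one shows that the sharp maximal function $(\laps{2s}_{\T^n} u)^\#$ is pointwise bounded by $M(|g|^2)^{1/2}$ plus a small multiple of $M(|\laps{2s}_{\T^n} u|^2)^{1/2}$ — this is the standard "good-$\lambda$" / Fefferman--Stein inequality input. Then the Fefferman--Stein theorem $\|F\|_{L^p} \aleq \|F^\#\|_{L^p}$ (valid on $\T^n$, or after lifting to $\R^n$ periodically with the usual care), the $L^p$-boundedness of the Hardy--Littlewood maximal function for $p>2$, and absorption of the small term, yield the claimed estimate for $p \ge 2$. For $1 < p < 2$ one uses a duality argument, exploiting that the (formal) adjoint of $L$ has the same structure since $\mu$ is even.

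The main obstacle I expect is the interior regularity estimate for the homogeneous equation $Lv=0$: one needs that $\laps{2s}_{\T^n} v$ (equivalently $v$ in a higher Sobolev or $C^\gamma$ space) enjoys local bounds by lower-order norms, despite the coefficient $K$ being merely bounded, nonnegative and elliptic only on a cone $\Sigma$. This is where the degenerate ellipticity from \eqref{eq:anglecond} must be fully exploited; \Cref{la:ftintegralcond-Cone} is precisely the tool that says the directional degeneracy of $K$ does not destroy the $|k|^{2s}$ lower bound on the symbol, so the multiplier $\sigma(k)/|k|^{2s}$ and its inverse are bounded and — crucially for Calderon-Zygmund — their discrete "derivatives" in $k$ decay like a classical symbol, giving the needed Hörmander--Mikhlin-type kernel bounds on the torus. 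Verifying these symbol estimates (uniformity in the relevant parameters, and the passage from continuous $\R^n$ estimates of Dong--Kim to the periodized multiplier $\mu$) is the technical crux; once that is in place the rest is the standard machinery. Since the statement explicitly says this follows "by almost verbatim arguments as in \cite{DONG20121166}," I would present the adaptation at the level of indicating which ellipticity inputs are replaced by \Cref{la:ftintegralcond-Cone} and the preceding $L^2$-lemma, rather than reproducing the entire iteration.
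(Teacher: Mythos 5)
Your overall scheme matches the paper's intent: the paper simply cites \cite{DONG20121166} and states that the result follows by ``almost verbatim arguments,'' and you correctly reproduce the Fefferman--Stein / sharp-maximal-function architecture that Dong--Kim use (start from the $L^2$ estimate, decompose $u = v + w$ with $v$ solving the homogeneous equation locally, exploit interior regularity of $v$ plus local $L^2$ control for $w$, bound the sharp maximal function $(\laps{2s}_{\T^n}u)^\#$ by $M(|g|^2)^{1/2}$ plus a small term, apply Fefferman--Stein, and handle $1<p<2$ by duality). That part of your proposal is aligned with the paper.

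The final paragraph, however, misidentifies the technical crux. You claim that \Cref{la:ftintegralcond-Cone} shows that the discrete derivatives of $\sigma(k)/|k|^{2s}$ ``decay like a classical symbol,'' giving ``Hörmander--Mikhlin-type kernel bounds on the torus,'' and you present verifying those symbol-derivative estimates as the main obstacle. That lemma only establishes the lower bound $\int_{\T^n}(1-\cos(2\pi\langle k,h\rangle))\mu(h)\,dh \gtrsim \eta |k|^{2s}$; it gives no derivative control whatsoever. For merely bounded measurable $K$ on $\S^{n-1}$ (especially for $2s<1$) the angular part of the symbol, $\theta \mapsto \int_{\S^{n-1}}K(\omega)|\langle\theta,\omega\rangle|^{2s}d\omega$, need not have the regularity a Mikhlin-type theorem demands, and circumventing exactly this obstruction is the point of Dong--Kim's method. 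The mechanism that actually yields interior regularity for homogeneous solutions is the \emph{translation invariance} of $\mu(x-y)$, which lets one commute $\laps{\sigma}$ through the equation --- precisely the content of \Cref{la:differentiating}. Combined with the $L^2$ a priori bound this bootstraps to local Sobolev estimates of arbitrary order for solutions of $Lv=0$ in a ball, replacing the missing multiplier smoothness. You do mention ``differentiating the homogeneous equation'' in passing, so your outline is reparable: reorganize it so that \Cref{la:differentiating} (not symbol-derivative bounds) is the key ingredient, and drop the Hörmander--Mikhlin claim, which as stated does not follow from the lemma you invoke and would not hold in this generality.
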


We also observe that we can differentiate the convolution type equation. Namely, we have

\begin{lemma}\label{la:differentiating} Let $u,\mu,g$ be as in \Cref{th:trinv}. 
	Assume that $u$ satisfies
	\[
	\int_{\T^n}\int_{\T^n} (u(x)-u(y))(\varphi(x)-\varphi(y))\, \mu\brac{x-y}dx\, dy = g[\varphi] \quad \forall \varphi \in C_{per}^\infty(\T^n).
	\]
	Then $v := \laps{\sigma} u$, for $\sigma \in \R$, satisfies
	\[
	\int_{\T^n}\int_{\T^n} (v(x)-v(y))(\varphi(x)-\varphi(y))\, \mu\brac{x-y}dx\, dy = g[\laps{\sigma}\varphi] \quad \forall \varphi \in C_{per}^\infty(\T^n).
	\]
\end{lemma}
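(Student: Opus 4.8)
The plan is to prove \Cref{la:differentiating} by a direct Fourier-side computation, exploiting that the equation is of convolution type on the torus so that $\laps{\sigma}_{\T^n}$ commutes with the bilinear form. First I would rewrite the weak formulation in terms of the single operator $Lu(x) = \int_{\T^n}(u(x)-u(y))\mu(x-y)\,dy$ as in the preceding lemma: testing against $\varphi \in C^\infty_{per}(\T^n)$, the hypothesis says $\langle Lu, \varphi\rangle = g[\varphi]$ as distributions on $\T^n$. Since $\mu$ is even (its defining series is symmetric under $h \mapsto -h$ because $K$ is evaluated on $\pm(h+m)/|h+m|$ and the lattice $\Z^n$ is symmetric), the bilinear form is symmetric, and the identity to be shown is equivalently $\langle L(\laps{\sigma}_{\T^n} u), \varphi\rangle = g[\laps{\sigma}_{\T^n}\varphi]$ for all test functions.

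The key computation is that $L$ is a Fourier multiplier on $\T^n$: from the calculation in the previous lemma, $\F(Lw)(k) = \F(w)(k)\, m(k)$ where $m(k) := \int_{\T^n}(1-e^{2\pi\imath\langle k,h\rangle})\mu(h)\,dh$, which is real (equal to $\int_{\T^n}(1-\cos(2\pi\langle k,h\rangle))\mu(h)\,dh$) and, by \Cref{la:ftintegralcond-Cone}, satisfies $m(k) \gtrsim |k|^{2s}$, so in particular $m$ is a genuine symbol and all manipulations below are justified for $u \in W^{s,2}(\T^n)$. Meanwhile $\laps{\sigma}_{\T^n}$ is by definition the multiplier $|k|^\sigma$. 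Hence
\[
\F(L\laps{\sigma}_{\T^n} u)(k) = |k|^\sigma\, m(k)\, \F(u)(k) = m(k)\, \F(\laps{\sigma}_{\T^n} u)(k),
\]
and on the other hand, for $\varphi \in C^\infty_{per}(\T^n)$,
\[
g[\laps{\sigma}_{\T^n}\varphi] = \langle Lu, \laps{\sigma}_{\T^n}\varphi\rangle = \sum_{k} m(k)\,\F(u)(k)\,\overline{\F(\laps{\sigma}_{\T^n}\varphi)(k)} = \sum_k m(k)\,|k|^\sigma\,\F(u)(k)\,\overline{\F(\varphi)(k)},
\]
which matches $\langle L(\laps{\sigma}_{\T^n} u), \varphi\rangle$ term by term. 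Unwinding $L$ back into the symmetric double-integral form (using that $\mu$ is even) gives exactly the asserted weak equation for $v = \laps{\sigma}_{\T^n} u$.

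The main obstacle is not conceptual but a matter of justifying the formal Fourier manipulations: one must check that $\laps{\sigma}_{\T^n}\varphi \in C^\infty_{per}(\T^n)$ (true, since $\varphi$ smooth forces rapid decay of $\F(\varphi)(k)$, hence $|k|^\sigma \F(\varphi)(k)$ still decays rapidly, so the Fourier series of $\laps{\sigma}_{\T^n}\varphi$ converges in $C^\infty$) so that it is a legitimate test function; that $\laps{\sigma}_{\T^n} u$ is at least a well-defined periodic distribution and the pairing $\langle L\laps{\sigma}_{\T^n} u, \varphi\rangle$ makes sense (again fine, since $m(k)|k|^\sigma \F(u)(k)$ is polynomially bounded against the rapidly decaying $\F(\varphi)(k)$); and the interchange of $\sum_k$ with the pairing, which is Plancherel/Parseval on $\T^n$ and is absolutely convergent by the decay of $\F(\varphi)$. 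I would also remark that when $\sigma$ is such that $\laps{\sigma}_{\T^n} u \in W^{s,2}(\T^n)$ (e.g. $\sigma \le 0$, or $\sigma$ small once higher regularity of $u$ is known from \Cref{pr:hd}), the resulting equation is again in the class covered by \Cref{th:trinv}, which is how the lemma is used downstream — but the identity itself holds at the level of distributions regardless.
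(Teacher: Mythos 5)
Your proposal is correct and follows essentially the same route as the paper: both pass to the Fourier side and exploit that $\laps{\sigma}$ and the convolution operator $L$ are Fourier multipliers on $\T^n$ that commute. The paper carries out the computation directly inside the double integral via a change of variables and Parseval, whereas you first abstract to $L$ as a multiplier with symbol $m(k)$; the underlying identity (moving $|k|^\sigma$ from $\hat u$ onto $\hat\varphi$) is the same.
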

\begin{proof}
	For every fixed $y \in \T^n$, by changing of variable choosing $h=x-y$, we get
	\[
	\begin{split}
	&\int_{\T^n}\int_{\T^n} (v(x)-v(y))(\varphi(x)-\varphi(y))\, \mu\brac{x-y}dx\, dy\\
	&= \int_{\T^n}\int_{\T^n} (\laps{\sigma} u(x)-\laps{\sigma} u(y))(\varphi(x)-\varphi(y))\, \mu\brac{x-y}dx\, dy\\
	&= \int_{\T^n}\int_{\T^n} (\laps{\sigma} u(y+h)-\laps{\sigma} u(y))(\varphi(y+h)-\varphi(y))\, \mu\brac{h}dh\, dy.
	\end{split}
	\]
	Then, using Parseval’s relation, we get
	\[
	\begin{split}
	&= \int_{\T^n} \sum_{m \in \Z^n} |e^{2\pi\imath m\cdot h}-1|^2 |m|^\sigma\hat{u}(m)\hat{\varphi}(m) \,\mu\brac{h} \,dh\\
	&= \int_{\T^n} \sum_{m \in \Z^n} |e^{2\pi\imath m\cdot h}-1|^2 \hat{u}(m)\widehat{\laps{\sigma}\varphi}(m) \,\mu\brac{h} \,dh\\
	&= \int_{\T^n}\int_{\T^n} (u(y+h)-u(y))(\laps{\sigma} \varphi(y+h)-\laps{\sigma} \varphi(y))\, \mu\brac{h}dh\, dy\\
	&= \int_{\T^n}\int_{\T^n}  (u(x)-u(y))(\laps{\sigma} \varphi(x)-\laps{\sigma} \varphi(y))\, \mu\brac{x-y}dx\, dy\\
	&= g[\laps{\sigma}\varphi].
	\end{split}
	\]
\end{proof}

Now we obtain the full result of \Cref{th:trinv} from interpolation.

\begin{proof}[Proof of \Cref{th:trinv}]
Let $T$ be the solution operator, i.e., for given $g$, $Tg$ is the solution of
\[
 \int_{\T^n}\int_{\T^n} (Tg(x)-Tg(y))(\varphi(x)-\varphi(y))\, \mu\brac{x-y}dx\, dy = g[\varphi] \quad \forall \varphi \in C^\infty(\T^n).
\]
 Observe that $Tg = u$ where such a solution can be shown to exist via variational methods.
From \Cref{pr:hd}, we obtain
\[
\|\laps{2s}Tg\|_{L^p(\T^n)} \aleq \|g\|_{L^p(\T^n)}.
\]
In view of \Cref{la:differentiating} the above estimate implies that for any $\sigma \in \R$ 
%
%
\[
\|\laps{2s+\sigma}Tg\|_{L^p(\T^n)} \aleq \|\laps{\sigma}g\|_{L^p(\T^n)}.
\]
That is for any $\sigma \in \R$, 
\[
T: \dot{H}^{\sigma,p}(\T^n) \mapsto \dot{H}^{2s+\sigma,p}(\T^n)
\]
is a linear, bounded operator. 
Take $\sigma_0 := -s$ and $\sigma_1 := 0$. Observe that $H^{\sigma,p} = F^{\sigma}_{p,2}$.
Then we have
\[
T: \dot{H}^{-s,p}(\T^n) \mapsto \dot{H}^{s,p}(\T^n),
\]
\[
T: \dot{H}^{s,p}(\T^n) \mapsto \dot{H}^{2s,p}(\T^n)
\]
are both bounded operators.

By \cite[2.5.1,p.86, Proposition and Remark]{RS96} we obtain by interpolation 
\[
T: B^{\sigma}_{p,p}(\T^n) \mapsto B^{2s+\sigma}_{p,p}(\T^n)
\]
is continuous and bounded for any $\sigma \in (-s,0)$, that is
\[
\|Tg\|_{B^{2s+\sigma}_{p,p}(\T^n)} \aleq \|g\|_{B^{\sigma}_{p,p}(\T^n)}.
\]
Since $B^{\sigma}_{p,p} = W^{\sigma,p}$ we conclude for $\sigma := -s_2$, 
\[
\|Tg\|_{W^{s_1,p}(\T^n)}  = \|Tg\|_{B^{s_1}_{p,p}(\T^n)} \aleq \|g\|_{B^{-s_2}_{p,p}(\T^n)} \equiv \|f\|_{\brac{W^{s_2,p'}(\T^n)}^\ast}.
\]
Since $u=Tg$ we conclude.
\end{proof}

\section{Local estimates}
Our proof of \Cref{th:main} uses a perturbative argument where we view the nonlocal equation of interest as a small perturbation of a convolution-type equation on the torrus. This is achieved by a freezing of coefficient method facilitated by the uniform H\"older assumption on the kernel $K$ given by  \eqref{eq:smallness}. This leads to a convolution-type equation as treated in \cite{DONG20121166}, see \Cref{th:trinv}. 
Applying \Cref{th:trinv} on the convolution-type equation leads to local priori estimates (because of the term \eqref{eq:Fest}). We convert these a priori estimates to regularity estimates with the help of a fixed point argument in \Cref{s:fixedpoint}. 

To that end, in this section for a given $\Omega\subset \mathbb{R}^{n}$ and $\Omega'\subset\subset \Omega$, we fix $u \in W^{s,2}(\R^n) \cap W^{t,p}(\Omega')$ that satisfies the equation 
{\small \begin{equation}\label{eq:freezepde}
 \int_{\Omega}\int_{\Omega} K\brac{x,|x-y|,\frac{x-y}{|x-y|}}\, \frac{\brac{u(x)-u(y)}\brac{\varphi(x)-\varphi(y)}}{|x-y|^{n+2s}} dx\, dy = g[\varphi] \quad \forall \varphi \in C_c^\infty(\Omega'), 
\end{equation} }
for a given distribution $g.$ 
Let also $x_0 \in \Omega'$ and $R>0$ such that $B(x_0,60\sqrt{n}R) \subset \Omega'$. Set the $n$-cube $\tilde{\T} := x_0+30R [-\frac{1}{2},\frac{1}{2}]^n$ which is a subset of $B(x_0,60\sqrt{n}R)$.  
Without loss of generality we may assume that $x_0=0$ and $R=\frac{1}{30}$ so that $0\in \Omega'\subset\subset \Omega$, $\tilde{\T} $ becomes the cube $[-\frac{1}{2},\frac{1}{2}]^n$, and $B(0, 2\sqrt{n})$ contains a number of copies of $\tilde{\T}.$
Define the periodic function 
\[
 \mu(h) := \sum_{k \in \Z^n} \frac{K(0,0,\frac{h+k}{|h+k|})}{|h+k|^{n+2s}}, \quad  \text{for $h\in \tilde{\T}$}. 
 \]
Let $\eta \in C_c^\infty(B(0,{\frac{1}{5}}))$ with $\eta \equiv 1$ in $B(0,{\frac{1}{6}})$. 
\subsection{Local {\em a priori} estimates via freezing coefficients argument }

\begin{proposition}[Freezing the coefficient]\label{pr:rewrite}

Let $s \in (0,1)$, $t \in [s,1)$ and $p \in [2,\infty)$. Let $\Omega \subset \R^n$ and $\Omega' \subset \subset \Omega$. 
Let $u \in W^{s,2}(\R^n) \cap W^{t,p}(\Omega')$ be a solution to
{\small \[\tag{\ref{eq:freezepde}}
 \int_{\Omega}\int_{\Omega} K\brac{x,|x-y|,\frac{x-y}{|x-y|}}\, \frac{\brac{u(x)-u(y)}\brac{\varphi(x)-\varphi(y)}}{|x-y|^{n+2s}} dx\, dy = g[\varphi] \quad \forall \varphi \in C_c^\infty(\Omega').
\] }
where $K$ is bounded, nondegenerate, and satisfies \Cref{eq:lambdaangleass}. 
Let $x_0 \in \Omega'$ and $R>0$ such that $B(x_0,60\sqrt{n}R) \subset \Omega'$, and $\eta \in C_c^\infty(B(x_0,6R))$ with $\eta \equiv 1$ in $B(x_0,5R)$.  Set $\tilde{\T} := x_0+30R \bigg[\frac{-1}{2},\frac{1}{2}\bigg]^n\subset B(x_0,60\sqrt{n}R)$ and define the $\tilde{\T}$-periodic function 
\[
 \mu(h) := \sum_{k \in 30R\Z^n} \frac{K(x_0,0,\frac{h+k}{|h+k|})}{|h+k|^{n+2s}}, \quad  \text{for $h\in \tilde{\T}$}. 
 \]
Denote by $v := \eta u$. Then
\begin{equation}\label{eq:newpde}
 \int_{\tilde{\T}}\int_{\tilde{\T}} \mu(x-y)\, \brac{v(x)-v(y)}\brac{\psi(x)-\psi(y)} dx\, dy = g[\eta\psi] + \mathcal{H}(v,\psi) + \mathcal{G}(u,\psi), 
\end{equation}
for all $\psi \in C^\infty_{per}\brac{\overline{\tilde{\T}}},$ where $\mathcal{H}$ and $\mathcal{G}$ are bilinear forms with the property that 

for any $s_1,s_2 \in (0,1)$ with $s_1+s_2 = 2s$ and any $p_1,p_2 \in (1,\infty)$ with $\frac{1}{p_1}+\frac{1}{p_2}=1$ we have 
\begin{equation}\label{eq:Fest}
  |\mathcal{H}(a,b)| \leq \sup_{x \in B(x_0,10R),|h| = 1, r \in [0,10R]} \abs{K(x,r,h)-K(x_0,0,h)}\  [a]_{W^{s_1,p_1}_{per}(\tilde{\T})}\, [b]_{W^{s_2,p_2}_{per}(\tilde{\T})},
\end{equation}
Moreover for any $\max\{0,2s-1\}< \tilde{t} < 2s-t$ and $q \leq \frac{p}{p-1}$ such that 
\begin{equation}\label{eq:rewrite:tpq}
 \min\left \{t-\frac{n}{q}, \tilde{t}-\frac{n}{q}\right \} \geq -\frac{n}{p'}
\end{equation}

we have 
{\small $
 |\mathcal{G}(a,b)| \leq C(R,\eta,\|K\|_{L^\infty})\, \brac{\|b\|_{L^q(\T^n)} + [b]_{W^{\tilde{t},q}(\T)}} \brac{\|a\|_{L^2(\tilde{\T})} + [a]_{W^{s,2}(\tilde{\T})} + [a]_{W^{t,p}(\tilde{\T})}}.
$}
\end{proposition}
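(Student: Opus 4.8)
\textbf{Proof proposal for Proposition \ref{pr:rewrite}.}

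The plan is to substitute $\psi$ with the extension $\eta\psi$ (or rather a suitable globally defined test function) into the original equation \eqref{eq:freezepde} and reorganize all the resulting terms into the three groups announced in the statement: the ``torus'' bilinear form with frozen kernel $\mu$, the perturbation term $\mathcal{H}$ measuring the deviation $K(x,r,h)-K(x_0,0,h)$, and the remainder $\mathcal{G}$ collecting all ``nonlocal tail'' contributions coming from the cutoff $\eta$ and from the difference between integrating over $\Omega\times\Omega$ versus over the periodic cell $\tilde{\T}\times\tilde{\T}$. First I would write $v=\eta u$ and, for $\psi\in C^\infty_{per}(\overline{\tilde{\T}})$, test \eqref{eq:freezepde} with $\eta\psi$ (noting $\eta\psi\in C_c^\infty(\Omega')$ since $\eta$ is supported well inside $\Omega'$). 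On the left-hand side, the key algebraic identity is the product rule for differences,
\[
 (\eta u)(x)-(\eta u)(y) = \eta(x)\,(u(x)-u(y)) + u(y)\,(\eta(x)-\eta(y)),
\]
and similarly for $\eta\psi$; expanding $(u(x)-u(y))(\eta(x)\psi(x)-\eta(y)\psi(y))$ produces the ``main'' term $\eta(x)\eta(y)(u(x)-u(y))(\psi(x)-\psi(y))$ (up to symmetrization) plus cross terms involving $u(y)(\eta(x)-\eta(y))$ or $\psi(y)(\eta(x)-\eta(y))$, each of which will be absorbed into $\mathcal{G}$.

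Next I would perform the periodization. The main term currently has the kernel $K\big(x,|x-y|,\tfrac{x-y}{|x-y|}\big)|x-y|^{-n-2s}\eta(x)\eta(y)$ integrated over $\Omega\times\Omega$; since $\eta(x)\eta(y)$ is supported in $B(x_0,6R)\times B(x_0,6R)\subset\tilde{\T}\times\tilde{\T}$, I can freely replace the domain by $\tilde{\T}\times\tilde{\T}$ and even extend the $\tilde{\T}$-periodic integration. Then I add and subtract $\mu(x-y)$: the difference
\[
 K\Big(x,|x-y|,\tfrac{x-y}{|x-y|}\Big)\frac{1}{|x-y|^{n+2s}} - \sum_{k\in 30R\Z^n}\frac{K(x_0,0,\frac{(x-y)+k}{|(x-y)+k|})}{|(x-y)+k|^{n+2s}}
\]
splits into (i) the ``diagonal'' term $k=0$, on which the difference is exactly $[K(x,|x-y|,h)-K(x_0,0,h)]|x-y|^{-n-2s}$ with $h=\tfrac{x-y}{|x-y|}$ — this, multiplied by $\eta(x)\eta(y)$ and paired with $(u(x)-u(y))(\psi(x)-\psi(y))$, is what becomes $\mathcal{H}(v,\psi)$ after replacing $\eta u$ by $v$ using again the difference product rule (the extra $\eta$-difference terms going into $\mathcal{G}$); and (ii) the tail $k\neq 0$ terms, which are bounded kernels on the support of $\eta(x)\eta(y)$ (since $|x-y+k|\gtrsim R$ there) and hence contribute a smooth, bounded bilinear form that goes into $\mathcal{G}$. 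Collecting: $\mathcal{H}$ carries the factor $\sup|K(x,r,h)-K(x_0,0,h)|$ taken over the relevant ranges, and the Hölder-type estimate \eqref{eq:Fest} follows by the standard duality bound for the bilinear form $\iint |a(x)-a(y)||b(x)-b(y)||x-y|^{-n-2s}$, which is controlled by $[a]_{W^{s_1,p_1}}[b]_{W^{s_2,p_2}}$ whenever $s_1+s_2=2s$ and $1/p_1+1/p_2=1$ — this is essentially a fractional Leibniz / Hölder estimate on the difference quotients.

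For $\mathcal{G}$ I would estimate each remainder piece: the $\eta$-commutator terms of the form $\iint K\,|x-y|^{-n-2s}\,u(y)(\eta(x)-\eta(y))(\psi(x)-\psi(y))$ and $\iint K\,|x-y|^{-n-2s}\,(u(x)-u(y))\,\psi(y)(\eta(x)-\eta(y))$, plus the $k\neq0$ tail term and the difference between $g[\eta\psi]$ written directly versus the torus pairing. Each of these has one ``extra'' regular factor ($\eta$ is smooth, compactly supported), so after distributing the $|x-y|^{-n-2s}$ weight and using $|\eta(x)-\eta(y)|\lesssim\min\{1,|x-y|\}$ one is left with off-diagonal integrals whose kernels are integrable near the diagonal; these are estimated by Hölder's inequality in the appropriate fractional spaces, producing the claimed bound
\[
 |\mathcal{G}(a,b)| \le C(R,\eta,\|K\|_{L^\infty})\big(\|b\|_{L^q(\T^n)}+[b]_{W^{\tilde t,q}(\T)}\big)\big(\|a\|_{L^2(\tilde\T)}+[a]_{W^{s,2}(\tilde\T)}+[a]_{W^{t,p}(\tilde\T)}\big),
\]
where the constraints $\max\{0,2s-1\}<\tilde t<2s-t$ and \eqref{eq:rewrite:tpq} are exactly what is needed to make the diagonal singularity integrable and to apply a fractional Sobolev embedding to land $a,b$ in dual spaces. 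I expect the main obstacle to be precisely this bookkeeping for $\mathcal{G}$: there are several structurally different remainder terms (the two $\eta$-commutators have different homogeneities because in one case the $u$-difference is intact while in the other the $\psi$-difference is), and for each one has to choose how to split the kernel weight $|x-y|^{-n-2s}$ between the two difference factors and the $\eta$-difference so that the resulting exponents satisfy $s_1+s_2=2s$-type balances while still respecting the a priori regularity $u\in W^{s,2}\cap W^{t,p}$ one is allowed to use; verifying that \eqref{eq:rewrite:tpq} can indeed be met for some admissible $(\tilde t,q)$ under the hypothesis $t<2s$ is the crux. The $\mathcal{H}$ term and the freezing identity itself are, by contrast, routine.
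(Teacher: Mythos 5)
Your plan is the paper's decomposition run in reverse: the paper starts from the torus form $\iint_{\tilde{\T}\times\tilde{\T}}\mu(x-y)(v(x)-v(y))(\psi(x)-\psi(y))$ and peels off corrections until the equation \eqref{eq:freezepde} appears, while you start from \eqref{eq:freezepde} tested with $\eta\psi$ and build toward the torus form. Both directions produce the same error pieces --- periodization tail, freezing error $K(x,|x-y|,\cdot)-K(x_0,0,\cdot)$, off-cell integrals from $\Omega\times\Omega$ versus $\tilde{\T}\times\tilde{\T}$, and $\eta$-commutators --- so the strategy is sound. A small algebraic point: the product rule you invoke gives
\[
 (u(x)-u(y))(\eta\psi(x)-\eta\psi(y))=\eta(x)(u(x)-u(y))(\psi(x)-\psi(y))+\psi(y)(u(x)-u(y))(\eta(x)-\eta(y)),
\]
not a term with the factor $\eta(x)\eta(y)$; the quantity actually wanted in \eqref{eq:newpde} is $(v(x)-v(y))(\psi(x)-\psi(y))$, which the paper reaches directly via the identity $(a-b)(cd-ef)+(c-e)(bd-af)=(ac-be)(d-f)$, i.e.
\[
 (u(x)-u(y))(\eta\psi(x)-\eta\psi(y))=(v(x)-v(y))(\psi(x)-\psi(y))-(\eta(x)-\eta(y))\bigl(u(y)\psi(x)-u(x)\psi(y)\bigr),
\]
with the second piece becoming $\mathcal{G}_5$. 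Your $\eta(x)\eta(y)$ version would need yet another commutator to reconcile with $v(x)-v(y)$, adding bookkeeping that the identity avoids.

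The more substantive gap is that the quantitative estimate for $\mathcal{G}$ --- which you yourself flag as the crux --- is left at the level of ``H\"older plus Sobolev embedding.'' In the paper $\mathcal{G}$ is five structurally distinct pieces: the periodization tail $\mathcal{G}_1$ has a bounded kernel (via \eqref{eq:Leftover-mu}); the off-cell terms $\mathcal{G}_2,\mathcal{G}_3,\mathcal{G}_4$ have $|x-y|\gtrsim R$ on their domains so the kernel is nonsingular and $L^p$-duality suffices; and the $\eta$-commutator $\mathcal{G}_5$ is the genuinely singular one, where $|\eta(x)-\eta(y)|\lesssim|x-y|$ lowers the order to $|x-y|^{-(n+2s-1)}$ and the weight is split as $|x-y|^{\tilde{t}-(2s-1)}\cdot|x-y|^{-n-\tilde{t}}$, which is exactly where the hypotheses $\tilde{t}>\max\{0,2s-1\}$ and \eqref{eq:rewrite:tpq} are used. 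Until those five computations are carried out the proposal is a correct plan but not yet a proof.
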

\begin{proof}
Without loss of generality (otherwise we rescale and translate) we may assume that $x_0=0$ and $R=\frac{1}{30}$ so that $0\in \Omega'\subset\subset \Omega$, $\tilde{\T} $ becomes the unit torus $\T^n=\bigg[\frac{-1}{2},\frac{1}{2}\bigg]^n$, and $B(0, 2\sqrt{n})$ contains a number of copies of $\T^n.$ The function $\mu$ now becomes 
\[
 \mu(h) = \sum_{k \in \Z^n} \frac{K(0,0,\frac{h+k}{|h+k|})}{|h+k|^{n+2s}}, \quad  \text{for $h\in \T^n$}, 
 \]
and  $\eta \in C_c^\infty(B(0,{\frac{1}{5}}))$ with $\eta \equiv 1$ in $B(0,{\frac{1}{6}})$. 

For $\psi \in C^\infty_{per}(\overline{\T^n})$, by a slight abuse of notation, we can assume that $\psi$ is equal to its periodic extension $\psi \in C^\infty(\R^n)$.

After writing $\mu$ as 
\[
 \mu(x-y) = \frac{K(0,0,\frac{x-y}{|x-y|})}{|x-y|^{n+2s}} + {\frac{\tilde{K}(x, y)}{|x-y|^{n+2s} }}
\]
with $ \tilde{K}(x, y)= \sum_{k \in \Z^n\backslash \{0\}} K\brac{0,0,\frac{x-y+k}{|x-y+k|}}\frac{|x-y|^{n+2s}}{|x-y+k|^{n+2s}},$
we view it as a perturbation of $\frac{K(0,0,\frac{x-y}{|x-y|})}{|x-y|^{n+2s}}$. This is motivated by the fact that 
{\small\begin{equation}\label{eq:Leftover-mu}
 \sup_{x \in B(0,\frac{1}{5}), y\in  \T^n} \sum_{k \in  \Z^n\backslash \{0\}} \frac{K\brac{0,0,\frac{x-y+k}{|x-y+k|}}}{|x-y+k|^{n+2s}} + \sup_{y \in B(0,\frac{1}{5}), x\in  \T^n} \sum_{k \in  \Z^n\backslash \{0\}} \frac{K\brac{0,0,\frac{x-y+k}{|x-y+k|}}}{|x-y+k|^{n+2s}}  \aleq 1.
\end{equation}}
and thus for $(x,y) \in \T^n \times \T^n$ and either $x\in \supp v$ or $y \in \supp v$ (or both) then
\[
 \abs{\mu(x-y) - \frac{K(0,0,{\frac{x-y}{|x-y|}})}{|x-y|^{n+2s}}} \aleq 1.
\]
Consequently we obtain the decomposition 
\begin{equation}\label{eq:involving-eta}
\begin{split}
  &\int_{\T^n}\int_{\T^n} \mu(x-y)\, \brac{v(x)-v(y)}\brac{\psi(x)-\psi(y)} dx\, dy \\
  =&\int_{\T^n}\int_{\T^n} K\brac{0,0,\frac{x-y}{|x-y|}}\, \frac{\brac{v(x)-v(y)}\brac{\psi(x)-\psi(y)}}{|x-y|^{n+2s}} dx\, dy + \mathcal{G}_{1}(u,\psi) \\
  \end{split}
\end{equation}
where
\[
 \mathcal{G}_{1}(u,\psi) =\int_{\T^n}\int_{\T^n} \tilde{K}(x, y)\, \frac{\brac{v(x)-v(y)}\brac{\psi(x)-\psi(y)}}{|x-y|^{n+2s}} dx\, dy. 
\]

We are going to view the first term in the right hand side of \eqref{eq:involving-eta} as a  'frozen coefficient' of the original nonlocal equation. To make proper comparison,  we set $\tilde{\psi}:=\tilde{\eta} (\psi - (\psi)_{\T^n})$ for a reasonable cutoff function $\tilde{\eta} \in C_c^\infty(2\T^n)$ and $\tilde{\eta} \equiv 1$ in $\T^n$. Notice that $2\T^n\subset\Omega.$
It then follows that 
\[
\begin{split}
 &\int_{\T^n}\int_{\T^n} K\brac{0,0,\frac{x-y}{|x-y|}}\, \frac{\brac{v(x)-v(y)}\brac{\psi(x)-\psi(y)}}{|x-y|^{n+2s}} dx\, dy \\
 =&\int_{\T^n}\int_{\T^n} K\brac{0,0,\frac{x-y}{|x-y|}}\, \frac{\brac{v(x)-v(y)}\brac{\tilde{\psi}(x)-\tilde{\psi}(y)}}{|x-y|^{n+2s}} dx\, dy \\
 =&\int_{\T^n}\int_{\T^n} K\brac{x,|x-y|,\frac{x-y}{|x-y|}}\, \frac{\brac{v(x)-v(y)}\brac{\tilde{\psi}(x)-\tilde{\psi}(y)}}{|x-y|^{n+2s}} dx\, dy \\
 &+\int_{\T^n}\int_{\T^n} E_{K}(x,y)\, \frac{\brac{v(x)-v(y)}\brac{\tilde{\psi}(x)-\tilde{\psi}(y)}}{|x-y|^{n+2s}} dx\, dy ,
 %
 \end{split}
 \]
 where $E_{K}(x, y) = {K\brac{0,0,\frac{x-y}{|x-y|}}-K\brac{x,|x-y|,\frac{x-y}{|x-y|}}}$ is the measure of the pointwise deviation of the main kernel from $K\brac{0,0,\frac{x-y}{|x-y|}}$. 
 We may thus write  
 \[
 \begin{split}
 \int_{\T^n}\int_{\T^n} K\brac{0,0,\frac{x-y}{|x-y|}}&\, \frac{\brac{v(x)-v(y)} \brac{\psi(x)-\psi(y)}}{|x-y|^{n+2s}} dx\, dy \\
 =&\mathcal{H}_1(u,\tilde{\psi}) + \mathcal{H}_2(v,\psi)   + \mathcal{G}_2(u,\psi) + \mathcal{G}_3(u,\tilde{\psi}) + \mathcal{G}_4(u,\tilde{\psi}), 
 \end{split}
 \]
 where $\mathcal{H}_1(u,\tilde{\psi})$ 
 \[
 \mathcal{H}_1(u,\tilde{\psi}) :=\int_{\Omega}\int_{\Omega} K\brac{x,|x-y|,\frac{x-y}{|x-y|}}\, \frac{\brac{v(x)-v(y)}\brac{\tilde{\psi}(x)-\tilde{\psi}(y)}}{|x-y|^{n+2s}} dx\, dy
 \]
and the remaining terms comes from decomposing  the deviation from the frozen term and are given by 
{ \[
 \mathcal{H}_2(v,\psi) := \int_{B(x_0, {\frac{1}{3}})}\int_{B(x_0, {\frac{1}{3}})} E_{K}(x,y)
 \frac{\brac{v(x)-v(y)}\brac{\tilde{\psi}(x)-\tilde{\psi}(y)}}{|x-y|^{n+2s}} dx\, dy,
 \]}
{\small  \[
 \mathcal{G}_2(u,\psi):=\int_{B(x_0, {\frac{1}{3}})}\int_{\Omega \backslash \T^n} K\brac{x, |x-y|,\frac{x-y}{|x-y|}}\, \frac{\brac{\eta(x)u(x)-\eta(y)u(y)}\brac{\tilde{\psi}(x)-\tilde{\psi}(y)}}{|x-y|^{n+2s}} dx\, dy, 
 \]}
{\small  \[
 \mathcal{G}_3(u,\psi) := \int_{\Omega \backslash \T^n} \int_{B(x_0, {\frac{1}{3}})}K\brac{x, |x-y|,\frac{x-y}{|x-y|}}\, \frac{\brac{\eta(x)u(x)-\eta(y)u(y)}\brac{\tilde{\psi}(x)-\tilde{\psi}(y)}}{|x-y|^{n+2s}} dx\, dy,  
 \]}
 and after recalling that the support of $\eta$ is in $B(0, {\frac{1}{5}})$)
{  \[
 \begin{split}
 \mathcal{G}_4(u,\psi)
& :=\int_{{B(0, {\frac{1}{5}})}}\int_{\T^n \backslash B(0,{\frac{1}{3}})} E_{K}(x, y)
  \frac{\brac{\eta(x)u(x)-\eta(y)u(y)}\brac{\tilde{\psi}(x)-\tilde{\psi}(y)}}{|x-y|^{n+2s}} dx\, dy \\
 &+\int_{\T^n \backslash B(0,{\frac{1}{3}})} \int_{B(x_0,{\frac{1}{5}})} E_{K}(x, y)
  \frac{\brac{\eta(x)u(x)-\eta(y)u(y)}\brac{\tilde{\psi}(x)-\tilde{\psi}(y)}}{|x-y|^{n+2s}} dx\, dy \\
 \end{split}
\]}


Using the identity $(a-b)(cd-ef)+(c-e)(bd-af) = (ac-be)(d-f)$ we may rewrite the expression in the integrand of $\mathcal{H}_1$ as 
\[
\begin{split}
 &(v(x)-v(y))(\tilde{\psi}(x)-\tilde{\psi}(y))\\
 &=(u(x)-u(y))(\eta(x)\tilde{\psi}(x)-\eta(y)\tilde{\psi}(y))
 +(\eta(x)-\eta(y))  \brac{u(y)\tilde{\psi}(x)-u(x) \tilde{\psi}(y)}. 
 \end{split}
\]
Consequently, using the fact that $u$ solves the nonlocal equation \eqref{eq:freezepde} we have 
\[
 \mathcal{H}_1(u,\tilde{\psi}) = g[\eta \tilde{\psi}] + \mathcal{G}_5(u,\psi), 
\]
where
\[
 \mathcal{G}_5(u,\psi) := \int_{\Omega}\int_{\Omega} K\brac{x,|x-y|,\frac{x-y}{|x-y|}}\frac{\brac{\eta(x)-\eta(y)} \brac{u(y)\tilde{\psi}(x)-u(x) \tilde{\psi}(y)} }{|x-y|^{n+2s}} dx\, dy. 
\]
We may now summarize the above discussion to rewrite \eqref{eq:involving-eta} as   
\[
\int_{\T^n}\int_{\T^n} \mu(x-y)\, \brac{v(x)-v(y)}\brac{\psi(x)-\psi(y)} dx\, dy=g[\eta \tilde{\psi}]  + \mathcal{H}(v, \psi) + \mathcal{G}(u, \psi),
\] 
where $\mathcal{H}(v, \psi) = \mathcal{H}_2(v, \psi)$, and  $\mathcal{G}(u, \psi) = \sum_{i=1}^{5} \mathcal{G}_{i}(u, \psi)$, which is precisely the expression in \eqref{eq:newpde}.  What remains is to give estimate to each of the terms to conclude the proof of the proposition. To that end, 
the estimate \eqref{eq:Fest} follows from the expression $\mathcal{H}_2(v,\psi)$ and from H\"older's inequality.
For the estimate for $\mathcal{G}$ will estimate each of the terms of $\mathcal{G}$. We begin estimating $\mathcal{G}_1$. First observe that using \eqref{eq:Leftover-mu} and since $v= \eta u\in W^{t, p}_{0}(\T^n)$ applying  Poincar\'e inequality to obtain 
\[
\begin{split}
 \mathcal{G}_{1}(u,\psi) &\aleq \int_{\T^n}\int_{\T^n} |v(x)-v(y)||\psi(x)-\psi(y)|\, dx\, dy\\
& \aleq \|u\|_{W^{t,p}(\T^n)}\, \brac{\int_{\T^n}\int_{\T^n} |\psi(x)-\psi(y)|^{p'}}^{\frac{1}{p'}}. 
\end{split}
\]
By Sobolev-Poincar\`e embedding the last factor can be estimates as 
{ \[
 \brac{\int_{\T^n}\int_{\T^n} |\psi(x)-\psi(y)|^{p'}\, dx\, dy}^{\frac{1}{p'}} \aleq [\psi]_{W^{\tilde{t},q}(\T^n)}, 
\]}
so that we have 
\[
 \mathcal{G}_{1}(u,\psi)  \aleq \|u\|_{W^{t,p}(\T^n)}\, [\psi]_{W^{\tilde{t}, q}(\T^n)}.
\]

To estimate of $\mathcal{G}_2$, we notice that for $x \in \Omega \backslash \T^n$ and $y \in B(0,{\frac{1}{3}})$ we have $|x-y| \aeq |x-y|+1$. 
Thus,
\[
\begin{split}
 |\mathcal{G}_2(u,\psi)|\aleq &\int_{B(0, {\frac{1}{3}})}\int_{\Omega \backslash \T^n} \frac{1}{|x-y|^{n+2s}+1} \abs{u(y)} \abs{\tilde{\psi}(x)-\tilde{\psi}(y)} dx\, dy\\
 \aleq& \|\tilde{\psi}\|_{L^1(\Omega)}\, \|u\|_{L^1(B(0, {\frac{1}{3}})} + \|u\tilde{\psi}\|_{L^1(B(0, {\frac{1}{3}}))}.
 \end{split}
 \]
Observe that from periodicity and Sobolev embedding we obtain that 
\[
 \|\tilde{\psi}\|_{L^{1}(\R^n)} + \|\tilde{\psi}\|_{L^{p'}(\R^n)} \aleq [\psi]_{W^{\tilde{t},q}(\T^n)}, 
\]
and 
\[
 \|u\|_{L^{1}(B(0, {\frac{1}{3}}))}+\|u\|_{L^{p}(B(0, {\frac{1}{3}}))} \aleq \brac{\|u\|_{L^2(\T)} + [u]_{W^{s,2}(\T)} + [u]_{W^{t,p}(\T)}}. 
\]
Thus we obtain the estimate 
\[
 \mathcal{G}_2(u,\psi) \aleq [\psi]_{W^{\tilde{t},\tilde{q}}(\T^n)}\, \brac{\|u\|_{L^2(\T)} + [u]_{W^{s,2}(\T)} + [u]_{W^{t,p}(\T)}}. 
\]
The procedure to estimate $\mathcal{G}_3$ is exactly the same as that of $\mathcal{G}_2$ and so we have 
\[
\begin{split}
 \mathcal{G}_3(u,\psi) \aleq &\int_{\Omega \backslash \T^n} \int_{B(0, {\frac{1}{3}})} \frac{1}{|x-y|^{n+2s}+1}\ |u(x)|\abs{\tilde{\psi}(x)-\tilde{\psi}(y)} dx\, dy \\
 \aleq&[\psi]_{W^{\tilde{t},\tilde{q}}(\T^n)}\, \brac{\|u\|_{L^2(\T)} + [u]_{W^{s,2}(\T)} + [u]_{W^{t,p}(\T)}}.
 \end{split}
 \]
To estimate of $\mathcal{G}_4$ first notice that since the support of $\eta$ is contained in $B(0, {\frac{1}{5}})$, 
\[
 |\mathcal{G}_4(u,\psi)|=\abs{\int_{B(0, {\frac{1}{5}})}\int_{\T^n \backslash B(0, {\frac{1}{3}})} \frac{\abs{-\eta(y)u(y)\, \brac{\tilde{\psi}(x)-\tilde{\psi}(y)}}}{|x-y|^{n+2s}} dx\, dy }\]
Now arguing in exactly similar way as the estimate for $\mathcal{G}_2$ we obtain 
\[
\begin{split}
 |\mathcal{G}_4(u,\psi)|
 \aleq &\int_{B(0, {\frac{1}{5}}))}\int_{\T^n \backslash B(0, {\frac{1}{3}})} \frac{1}{|x-y|^{n+2s}+1} |u(y)|\, \abs{\tilde{\psi}(x)-\tilde{\psi}(y)} dx\, dy \\
 \aleq&[\psi]_{W^{\tilde{t},q}(\T^n)}\, \brac{\|u\|_{L^2(\T)} + [u]_{W^{s,2}(\T)} + [u]_{W^{t,p}(\T)}}.
 \end{split}
\]

Finally, we estimate $\mathcal{G}_5$ as follows: using $|\eta(x)-\eta(y)|\aleq |x-y|$ for $x, y\in \Omega'$, 
\[
\begin{split}
 |\mathcal{G}_5(u,\tilde{\psi})| \aleq& \int_{\Omega'}\int_{\Omega'} \frac{|u(x)| |\tilde{\psi}(x)-\tilde{\psi}(y)| + |\tilde{\psi}(x)| |u(x)-u(y)|}{|x-y|^{n+2s-1}} dx\, dy\\
 &+\int_{\Omega \backslash \Omega'} \int_{\Omega'} \chi_{|x-y| \ageq R}\frac{|u(x)| |\tilde{\psi}(y)|+ |\tilde{\psi}(x)| |u(y)|}{|x-y|^{n+2s}}\, dx\, dy. 
\end{split}
\]
For the first term, we observe that 
\[
\begin{split}
\int_{\Omega'}\int_{\Omega'} \frac{|u(x)| |\tilde{\psi}(x)-\tilde{\psi}(y)| }{|x-y|^{n+2s-1}} dx\, dy
=&\int_{\Omega'}\int_{\Omega'} |x-y|^{\tilde{t}-(2s-1)} |u(x)|  \frac{|\tilde{\psi}(x)-\tilde{\psi}(y)| 
}{|x-y|^{n+\tilde{t}}} dx\, dy\\
\aleq &[\tilde{\psi}]_{W^{\tilde{t},q}}\, \brac{\int_{\Omega'}\int_{\Omega'} |x-y|^{\tilde{t}q'-(2s-1)q'-n} |u(x)|^{q'}  \, dx\,dy}^{\frac{1}{q'}}\\
\aleq&[\tilde{\psi}]_{W^{\tilde{t},q}}\, \|u\|_{L^{q'}(\Omega')}\\
\aleq&[\tilde{\psi}]_{W^{\tilde{t},q}}\, \brac{\|u\|_{L^{2}(\Omega')}+[u]_{W^{t,p}(\Omega')}}.
\end{split}
\]
The the second inequality follows from the fact that $\tilde{t}>2s-1$ and 
\[
\sup_{x\in \Omega'} \int_{\Omega'} |x-y|^{\tilde{t}q'-(2s-1)q'-n}dy \leq C(\text{diam}(\Omega'), q',n,s,\tilde{t} ).
\]

Also since $t > s$ and so $t> 2s-1$ and $s \in (0,1)$, similarly as above we have 
\[
\begin{split}
\int_{\Omega'}\int_{\Omega'} \frac{|\tilde{\psi}(x)| |u(x)-u(y)|}{|x-y|^{n+2s-1}} dx\, dy
\aleq [u]_{W^{t,p}(\Omega')}\, \|\tilde{\psi}\|_{L^{p'}(\Omega')}
\aleq [u]_{W^{t,p}(\Omega')}\, [\psi]_{W^{\tilde{t},p'}(\T^n)}.
\end{split}
\]

As for the second term: as in \cite[Lemma 5.2.]{MSYHoelder20} for any $r \in [1,\infty]$,
\[
 \sup_{x \in \Omega'} \int_{\Omega \backslash \Omega'}  \chi_{|x-y| \ageq R} \frac{|g(y)|}{|x-y|^{n+2s}}\, dy \aleq \|g\|_{L^r(\Omega)}
\]
 
and thus
\[
 \begin{split}
\left |\int_{\Omega \backslash \Omega'} \int_{\Omega'} \chi_{|x-y| \ageq R}\frac{|u(x)| |\tilde{\psi}(y)|+ |\tilde{\psi}(x)| |u(y)|}{|x-y|^{n+2s}}\, dx\, dy \right | 
\aleq& \|\tilde{\psi}\|_{L^1(\Omega')}\, \|u\|_{L^{2}(\Omega)} + \|u\|_{L^1(\Omega')}\, \|\tilde{\psi}\|_{L^{q}(\Omega)}\\
\aleq& \|\tilde{\psi}\|_{L^q(\Omega')}\, \|u\|_{L^{2}(\Omega)}\\
\aleq&[\psi]_{W^{\tilde{t},q}(\T^n)}\, \|u\|_{L^{2}(\Omega)}.
\end{split}
\]
Collecting the estimates for $\mathcal{G}_{i}$ for $i=1, 2\cdots, 5$ completes the proof of the proposition.  
\end{proof}
\begin{corollary}\label{apEst}
Fix $u \in W^{t,p}\cap W^{s,2}(\Omega')$ under the assumptions for $K, t, \tilde{t}$, $s$, $p$, and $q$ as well as $R$ and $\tilde{\T}$ as in \Cref{pr:rewrite}. 
For a given distribution $g$ such that there is some $\Lambda > 0$ with the 
\[
 |g[\varphi]| \leq \Lambda\, \left(\|\varphi\|_{L^{q}(\R^n)}+[\varphi]_{W^{\tilde{t},q}(\R^n)}\right), \]
for all $\varphi \in C_c^\infty(B(x_0,30R))$,    let $v \in W^{s,2}(\tilde{\T})$ be a solution to  \eqref{eq:newpde}. 
If in addition $v \in W^{2s-\tilde{t},q'}(\tilde{\T})$,  then there exists a small enough $\eps>0$ (depending on $x_0, $$t$, $s$, $p$) such that for any $K$ satisfying \eqref{eq:smallness} we have 
\begin{equation}\label{eq:fix:vest}
[v]_{W^{2s-\tilde{t},q'}(\tilde{\T})} \aleq \|u\|_{L^2(\tilde{\T})} + [u]_{W^{s,2}(\tilde{\T})} + [u]_{W^{t,p}(\tilde{\T})} + \Lambda.@
\end{equation}

\end{corollary}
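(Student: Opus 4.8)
\textbf{Proof proposal for Corollary \ref{apEst}.}

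The plan is to apply the torus regularity theorem \Cref{th:trinv} to the convolution-type equation \eqref{eq:newpde} that $v$ satisfies, treating the right-hand side $g[\eta\psi] + \mathcal{H}(v,\psi) + \mathcal{G}(u,\psi)$ as a distribution acting on test functions $\psi$, and verifying that this distribution has the required $W^{-\tilde t, q'}$-bound (i.e. it is bounded by $[\psi]_{W^{\tilde t, q'}}$, or rather the inhomogeneous version with an $L^{q'}$-term as in the theorem statement applied with $s_2 = \tilde t$, $p' = q'$, $p = q$, and then $s_1 = 2s - \tilde t$). Concretely, the first step is to bound each of the three pieces of the right-hand side of \eqref{eq:newpde} as a functional of $\psi \in C^\infty_{per}(\overline{\tilde\T})$:
\begin{itemize}
\item For $g[\eta\psi]$: use the hypothesis $|g[\varphi]| \leq \Lambda(\|\varphi\|_{L^q} + [\varphi]_{W^{\tilde t,q}})$ together with the fact that multiplication by the fixed cutoff $\eta \in C_c^\infty$ is bounded on $W^{\tilde t, q}$ (and on $L^q$), yielding a bound $\lesssim \Lambda(\|\psi\|_{L^q(\tilde\T)} + [\psi]_{W^{\tilde t,q}(\tilde\T)})$.
\item For $\mathcal{G}(u,\psi)$: this is exactly the content of the last displayed estimate in \Cref{pr:rewrite}, which gives $|\mathcal{G}(u,\psi)| \lesssim (\|\psi\|_{L^q(\T^n)} + [\psi]_{W^{\tilde t,q}(\T)})\,(\|u\|_{L^2(\tilde\T)} + [u]_{W^{s,2}(\tilde\T)} + [u]_{W^{t,p}(\tilde\T)})$ — here we use that $t, \tilde t, q$ satisfy \eqref{eq:rewrite:tpq}, which is part of the standing hypotheses.
\item For $\mathcal{H}(v,\psi) = \mathcal{H}_2(v,\psi)$: use \eqref{eq:Fest} with the splitting $2s = (2s-\tilde t) + \tilde t$, i.e. $s_1 = 2s - \tilde t$, $s_2 = \tilde t$ (which is admissible since $\max\{0,2s-1\} < \tilde t < 2s - t < 1$, so both exponents lie in $(0,1)$), and the dual pair $p_1 = q'$, $p_2 = q$. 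This produces the bound $|\mathcal{H}(v,\psi)| \leq \omega(\eps)\, [v]_{W^{2s-\tilde t, q'}(\tilde\T)}\, [\psi]_{W^{\tilde t, q}(\tilde\T)}$, where $\omega(\eps) := \sup_{x \in B(x_0,10R), |h|=1, r \in [0,10R]} |K(x,r,h) - K(x_0,0,h)|$ can be made as small as we like by \eqref{eq:smallness} (choosing $\lambda$ there so that $\lambda\mathfrak{R} \geq 30R$, or shrinking $R$ accordingly).
\end{itemize}
Combining these, the right-hand side of \eqref{eq:newpde}, viewed as a linear functional of $\psi$, is bounded by
\[
\left(\Lambda + \|u\|_{L^2(\tilde\T)} + [u]_{W^{s,2}(\tilde\T)} + [u]_{W^{t,p}(\tilde\T)} + \omega(\eps)[v]_{W^{2s-\tilde t,q'}(\tilde\T)}\right)\left(\|\psi\|_{L^q(\tilde\T)} + [\psi]_{W^{\tilde t,q}(\tilde\T)}\right).
\]

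The second step is to invoke \Cref{th:trinv} with $s_2 := \tilde t$, $s_1 := 2s - \tilde t$, the exponent pair $(q, q')$ in place of $(p', p)$ — note $2 \leq p < \infty$ forces $q = p' \leq 2 \leq p = q'$, and the theorem requires $p \in (1,\infty)$ for its exponent, here $q' \in (1,\infty)$, fine; it also requires $s_2 < s$, i.e. $\tilde t < s$, which holds because $\tilde t < 2s - t \leq 2s - s = s$ using $t \geq s$. The kernel $K(h/|h|) := K(x_0, 0, h)$ restricted to $\S^{n-1}$ is nonnegative, bounded by $\Lambda$, and satisfies the cone nondegeneracy \eqref{eq:anglecond} with the same $\eta$ by hypothesis \eqref{eq:lambdaangleass}, and $\mu$ as defined in \Cref{pr:rewrite} is precisely the periodization appearing in \Cref{th:trinv} (after the normalization $R = 1/30$). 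The conclusion of \Cref{th:trinv} then gives
\[
[v]_{W^{2s-\tilde t, q'}(\tilde\T)} \lesssim \Lambda + \|u\|_{L^2(\tilde\T)} + [u]_{W^{s,2}(\tilde\T)} + [u]_{W^{t,p}(\tilde\T)} + \omega(\eps)\,[v]_{W^{2s-\tilde t, q'}(\tilde\T)}.
\]

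The third and final step is the absorption argument: here is where the a priori assumption $v \in W^{2s-\tilde t, q'}(\tilde\T)$ is essential — it guarantees $[v]_{W^{2s-\tilde t,q'}(\tilde\T)} < \infty$, so that the last term can legitimately be moved to the left-hand side. Choosing $\eps$ (equivalently $\lambda$, equivalently $R$) small enough that the implicit constant times $\omega(\eps)$ is $\leq \tfrac12$, we absorb and obtain exactly \eqref{eq:fix:vest}.

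\textbf{Main obstacle.} The genuinely delicate point is the bookkeeping of admissible exponents: one must check simultaneously that $s_1 = 2s-\tilde t$ and $s_2 = \tilde t$ both lie in $(0,1)$ (needs $\max\{0,2s-1\} < \tilde t$ and $\tilde t < 2s$, the latter following from $\tilde t < 2s - t < 2s$), that $s_2 < s$ (needs $\tilde t < s$, via $t \geq s$), that the Sobolev-embedding condition \eqref{eq:rewrite:tpq} relating $t,\tilde t,q,p$ is met so that the $\mathcal{G}$-estimate of \Cref{pr:rewrite} is available, and that $\mathcal{H}_2$ can be split across the conjugate pair $(q',q)$. Everything else — the multiplier bound on $\eta$, the application of \Cref{th:trinv}, and the absorption — is routine once the a priori finiteness of $[v]_{W^{2s-\tilde t,q'}}$ is in hand. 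The smallness of $\omega(\eps)$ is not an obstacle per se but is the reason the statement carries the qualifier ``there exists a small enough $\eps$''; it must be chosen after the implicit constants from \Cref{th:trinv} and \Cref{pr:rewrite} are fixed.
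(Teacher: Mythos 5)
Your proposal is correct and follows essentially the same route as the paper: collect the right-hand side of \eqref{eq:newpde} into a single functional $\Upsilon[\psi]$, bound it by $\big(\varepsilon[v]_{W^{2s-\tilde t,q'}}+\|u\|_{L^2}+[u]_{W^{s,2}}+[u]_{W^{t,p}}+\Lambda\big)[\psi]_{W^{\tilde t,q}}$ via \eqref{eq:Fest} (with $s_1=2s-\tilde t$, $s_2=\tilde t$, $p_1=q'$, $p_2=q$) and the $\mathcal G$-estimate of \Cref{pr:rewrite}, apply \Cref{th:trinv}, and absorb using the a priori finiteness of $[v]_{W^{2s-\tilde t,q'}}$. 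Your explicit check that the exponent choices $s_1,s_2$ are admissible for both \eqref{eq:Fest} and \Cref{th:trinv} (in particular $\tilde t<s$ via $t\geq s$, and $2s-\tilde t<1$ via $\tilde t>\max\{0,2s-1\}$) is a useful amplification of what the paper leaves implicit; the only minor imprecision is writing $q=p'$ where \Cref{pr:rewrite} only requires $q\leq p'$, but this does not affect the argument.
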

\begin{proof}
We notice that \eqref{eq:newpde} can be rewritten as 
\[
 \int_{\tilde{\T}}\int_{\tilde{\T}} \mu(x-y)\, \brac{v(x)-v(y)}\brac{\psi(x)-\psi(y)} dx\, dy =\Upsilon[\psi] ,
\]
where $\Upsilon[\psi]:=g[\eta\psi] + \mathcal{H}(v, \psi) + \mathcal{G}(u, \psi)$ for any $\psi \in C^{\infty}_{per}(\tilde{\T}). $ 
Next, under all the assumptions of \Cref{pr:rewrite}, the  estimates for the bilinear forms $\mathcal{H}$ and $\mathcal{G}$ combined with \eqref{eq:smallness}  we have the estimate for 
\[
|\Upsilon[\psi]| \aleq [\psi]_{W^{\tilde{t},q}(\tilde{\T})}\left( \eps [v]_{W^{2s-\tilde{t},q'}(\tilde{\T})}+\|u\|_{L^2(\tilde{\T})} + [u]_{W^{s,2}(\tilde{\T})} + [u]_{W^{t,p}(\tilde{\T})} + \Lambda\right).
\]
We can now apply \Cref{th:trinv}  and obtain
\[
 [v]_{W^{2s-\tilde{t},q'}(\tilde{\T})}  \aleq \eps [v]_{W^{2s-\tilde{t},q'}(\tilde{\T})} + \|u\|_{L^2(\tilde{\T})} + [u]_{W^{s,2}(\tilde{\T})} + [u]_{W^{t,p}(\tilde{\T})}  + \Lambda.
\]
Since by assumption $[v]_{W^{2s-\tilde{t},\tilde{q}'}(\tilde{\T})} < \infty$ we obtain \eqref{eq:fix:vest} by absorption of the $\eps [v]_{W^{2s-\tilde{t},\tilde{q}'}(\T^n)}$ term after choosing $\eps$ small enough.
\end{proof}
\subsection{Local improved regularity  via a fixed point theorem}\label{s:fixedpoint}
We remark that the estimate given in \Cref{apEst} is an {\em a priori} estimate because it relies on the regularity assumption $v\in W^{2s-\tilde{t},{q}'}(\tilde{\T})$. In this subsection we prove this local regularity result  under the assumption of \Cref{pr:rewrite}
\begin{theorem}\label{th:fixedpoint}
Fix $u \in W^{t,p}\cap W^{s,2}(\Omega')$ under the assumptions for $t, \tilde{t}$, $s$, $p$, and $q$ as in \Cref{pr:rewrite}. Assume that $g$ is a distribution such that there is some $\Lambda > 0$ with the 
\[
 |g[\varphi]| \leq \Lambda\, \left(\|\varphi\|_{L^{q}(\R^n)}+[\varphi]_{W^{\tilde{t},q}(\R^n)}\right), \]
for all $\varphi \in C_c^\infty(B(x_0,30R))$. 
Then there exists a small enough $\eps>$ (depending on $x_0, $$t$, $s$, $p$ such that for any $K$ satisfying \eqref{eq:smallness}, 
any solution $v \in W^{s,2}(\tilde{\T})$ to \eqref{eq:newpde} belongs to $W^{2s-\tilde{t},q'}(\tilde{\T})$.
\end{theorem}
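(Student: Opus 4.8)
The plan is a contraction-mapping argument in $W^{2s-\tilde{t},q'}(\tilde{\T})$: push the fixed, already-controlled part of the right-hand side of \eqref{eq:newpde} through the solution operator of \Cref{th:trinv}, treat the $v$-dependent term $\mathcal H(v,\cdot)$ as a genuinely small perturbation (its smallness is exactly \eqref{eq:smallness} via \eqref{eq:Fest}), obtain by Banach's fixed point theorem a solution $w_\ast\in W^{2s-\tilde t,q'}(\tilde{\T})$, and finally identify $w_\ast$ with the given $W^{s,2}$-solution $v$ via an energy comparison. Together with \Cref{apEst} this gives both the regularity and the quantitative estimate.

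\emph{Construction of the map and the contraction.} Let $T$ be the solution operator on $\tilde{\T}$ for the convolution kernel $\mu$, as in the proof of \Cref{th:trinv}, normalized to produce mean-zero solutions; \Cref{th:trinv} applies because $\mu$ is built from $h\mapsto K(x_0,0,h)$, which inherits the cone ellipticity \eqref{eq:anglecond} (with the same $\eta,\Sigma$) from \eqref{eq:lambdaangleass}, and because $\tilde t<2s-t\le s$ (using $t\ge s$). Applied with the exponents $p=q'$, $s_2=\tilde t$, $s_1=2s-\tilde t$ (admissible, since $\max\{0,2s-1\}<\tilde t<2s-t<1$ forces $s_1,s_2\in(0,1)$), the bound established in the proof of \Cref{th:trinv} reads $[Tg]_{W^{2s-\tilde t,q'}(\tilde{\T})}\aleq C_T\,\|g\|_{(W^{\tilde t,q}(\tilde{\T}))^\ast}$, with $C_T=C_T(n,s,\tilde t,\eta,\Sigma)$ \emph{independent of $\eps$ and $\Lambda$}. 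Writing $\Upsilon_0:=g[\eta\,\cdot\,]+\mathcal G(u,\cdot)$ for the $v$-free part of the right-hand side of \eqref{eq:newpde}, set
\[
\Phi(w):=T\big(\Upsilon_0+\mathcal H(w,\cdot)\big),\qquad w\in W^{2s-\tilde t,q'}(\tilde{\T}).
\]
Both pieces of $\Upsilon_0$ define elements of $(W^{\tilde t,q}(\tilde{\T}))^\ast$: the $g$-term by the hypothesis on $g$ (note $\eta\psi\in C_c^\infty(B(x_0,30R))$ and $\|\eta\psi\|_{W^{\tilde t,q}}\aleq\|\psi\|_{W^{\tilde t,q}}$), and $\mathcal G(u,\cdot)$ by \Cref{pr:rewrite}, because $u\in W^{s,2}(\R^n)\cap W^{t,p}(\Omega')$ makes $\|u\|_{L^2(\tilde{\T})}+[u]_{W^{s,2}(\tilde{\T})}+[u]_{W^{t,p}(\tilde{\T})}$ finite; and \eqref{eq:Fest} with $s_1=2s-\tilde t$, $s_2=\tilde t$, $p_1=q'$, $p_2=q$, together with \eqref{eq:smallness}, gives $|\mathcal H(w,\psi)|\le\eps\,[w]_{W^{2s-\tilde t,q'}(\tilde{\T})}[\psi]_{W^{\tilde t,q}(\tilde{\T})}$. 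Hence $\Phi$ maps the mean-zero subspace of $W^{2s-\tilde t,q'}(\tilde{\T})$ into itself, and, by bilinearity of $\mathcal H$, $\Phi(w_1)-\Phi(w_2)=T(\mathcal H(w_1-w_2,\cdot))$, so $[\Phi(w_1)-\Phi(w_2)]_{W^{2s-\tilde t,q'}}\le C_T\eps\,[w_1-w_2]_{W^{2s-\tilde t,q'}}$. Choosing $\eps$ small enough that $C_T\eps\le\tfrac12$, Banach's theorem produces a unique fixed point $w_\ast\in W^{2s-\tilde t,q'}(\tilde{\T})$, which by construction solves \eqref{eq:newpde}.

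\emph{Identification of $w_\ast$ with $v$.} Since \eqref{eq:newpde} involves only the differences $v(x)-v(y)$ and $\mathcal H(c,\cdot)\equiv 0$ for constants $c$, the function $v-(v)_{\tilde{\T}}$ also solves \eqref{eq:newpde}. Put $d:=v-(v)_{\tilde{\T}}-w_\ast$, so $(d)_{\tilde{\T}}=0$. As $2s-\tilde t>s$ (from $\tilde t<s$) and $q'\ge p\ge 2$, Sobolev embedding gives $w_\ast\in W^{s,2}(\tilde{\T})$, hence $d\in W^{s,2}(\tilde{\T})$; subtracting the two instances of \eqref{eq:newpde} and using bilinearity, $d$ solves the homogeneous equation
\[
\int_{\tilde{\T}}\int_{\tilde{\T}}\mu(x-y)\,(d(x)-d(y))(\psi(x)-\psi(y))\,dx\,dy=\mathcal H(d,\psi)\qquad\forall\,\psi\in C^\infty_{per}(\tilde{\T}).
\]
Testing with $\psi=d$ (legitimate by density of $C^\infty_{per}(\tilde{\T})$ in $W^{s,2}(\tilde{\T})$), the left-hand side is $\ge c\,[d]_{W^{s,2}(\tilde{\T})}^2$ by \Cref{la:ftintegralcond-Cone} via Plancherel (as in the lemma preceding \Cref{pr:hd}), while \eqref{eq:Fest} with $s_1=s_2=s$, $p_1=p_2=2$, combined with \eqref{eq:smallness} and the Poincaré inequality $\|d\|_{L^2(\tilde{\T})}\aleq[d]_{W^{s,2}(\tilde{\T})}$, bounds the right-hand side by $C\eps\,[d]_{W^{s,2}(\tilde{\T})}^2$. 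Shrinking $\eps$ once more so that $C\eps<c$ forces $[d]_{W^{s,2}(\tilde{\T})}=0$, i.e. $d\equiv 0$; therefore $v=w_\ast+(v)_{\tilde{\T}}\in W^{2s-\tilde t,q'}(\tilde{\T})$.

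The main obstacle is this last step. The delicate point is to see that the a priori hypothesis $v\in W^{s,2}(\tilde{\T})$ — and nothing stronger — already suffices to run the comparison: one must know that the difference $d$ lands in the energy space $W^{s,2}$ (this is exactly where $2s-\tilde t>s$ and $q'\ge 2$ enter, through the embedding $W^{2s-\tilde t,q'}(\tilde{\T})\hookrightarrow W^{s,2}(\tilde{\T})$) and can legitimately serve as a test function, and that the perturbation $\mathcal H(d,\cdot)$ is controlled in that same norm by a constant proportional to $\eps$ rather than to $\|K\|_{L^\infty}$. It is equally essential, and should be checked with care, that the operator norm $C_T$ of $T$ in the first step depends only on the ellipticity data $(\eta,\Sigma,n,s,\tilde t)$ and not on $\eps$ or $\Lambda$, since the admissible threshold for $\eps$ is fixed through $C_T$.
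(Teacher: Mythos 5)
Your proof is correct, and its skeleton --- produce a solution of \eqref{eq:newpde} living in $W^{2s-\tilde{t},q'}(\tilde{\T})$, then identify it with the given $W^{s,2}$ solution by an energy uniqueness argument --- is exactly the paper's. The identification step (testing the homogeneous equation with the mean-zero difference $d$, coercivity via \Cref{la:ftintegralcond-Cone}, the smallness of $\mathcal H$ from \eqref{eq:Fest}--\eqref{eq:smallness}) is essentially word-for-word the paper's Step 1. Where you genuinely diverge is in the existence step: you run a single Banach fixed-point argument for $\Phi(w)=T(\Upsilon_0+\mathcal H(w,\cdot))$, leaning on the fact that the solution operator $T$ from \Cref{th:trinv} is a bounded map $(W^{\tilde{t},q}(\tilde{\T}))^\ast\to W^{2s-\tilde t,q'}(\tilde{\T})$ whose norm depends only on $(n,s,\tilde t,\eta,\Sigma)$. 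The paper instead runs a Picard-type iteration of Dirichlet minimizers $v_{k+1}$ and inserts a mollification $(v_{k+1})_\tau$ precisely because it funnels each iterate through \Cref{apEst}, which is stated as an \emph{a priori} estimate requiring $v\in W^{2s-\tilde t,q'}$ to begin with; the mollifier supplies that qualitative regularity before passing to the limit and proving the sequence is Cauchy. Your route bypasses the mollification by invoking \Cref{th:trinv} as a genuine regularity theorem for arbitrary data in the dual space rather than merely an a priori bound; this is justified by the interpolation argument in the proof of \Cref{th:trinv} (boundedness of $T:\dot H^{\sigma,p}\to\dot H^{2s+\sigma,p}$ for a range of $\sigma$), and is the cleaner presentation. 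The one thing worth making fully explicit, which you flag but could spell out, is that $T$ is well-defined on all of $(W^{\tilde t,q})^\ast$ in the first place: since $q\le p'\le 2$, the embedding $W^{s,2}(\tilde{\T})\hookrightarrow W^{\tilde t,q}(\tilde{\T})$ holds, so any such functional acts on the energy space and the variational solution (mean-zero) exists, after which \Cref{th:trinv} upgrades it. With that noted, your proof and the paper's are equivalent in substance; yours trades the explicit iteration-plus-mollification for the standard abstract contraction principle.
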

%
%
%
\begin{proof}
We prove the theorem in two steps. In the first step we show that any $W^{s,2}(\tilde{\T})$ solution to \eqref{eq:newpde} is unique up a constant. Next, we demonstrate that there is a solution $v \in W^{2s-\tilde{t},q'}\cap W^{s,2}(\tilde{\T})$ to \eqref{eq:newpde}. As a consequence, we conclude that any solution $v \in W^{s,2}(\tilde{\T})$ to \eqref{eq:newpde} belongs to $W^{2s-\tilde{t},q'}(\tilde{\T})$.
%

For the uniqueness, assume there are two solutions $v$ and $\tilde{v}$ in $W^{s,2}(\tilde{\T})$. Set $w := v-\tilde{v}$, then \eqref{eq:newpde} implies (recall that $u$ and $g$ are fixed)
\[
 \int_{\tilde{\T}}\int_{\tilde{\T}} \mu(x-y)\, \brac{w(x)-w(y)}\brac{\psi(x)-\psi(y)} dx\, dy = \mathcal{H}(w,\psi).
\]
Testing (by a density argument) with $\psi=w$ we find in view of \Cref{eq:Fest}
\[
 [w]_{W^{s,2}(\tilde{\T})}^2 \aleq \sup_{x \in B(x_0,10R),|h| = 1, r \leq 20R} \abs{K(x,r,h)-K(x_0,r,h)}\  [w]_{W^{s,2}(\tilde{\T})}^2.
\]
Then choosing $\eps$ sufficiently small, for any $K$ satisfying \eqref{eq:smallness}, it follows that $[w]_{W^{s,2}(\tilde{\T})}^2 = 0$ and thus $w \equiv const$.

Let us now demonstrate that there is a solution $v$ that belongs to $W^{2s-\tilde{t},q'}\cap W^{s,2}(\tilde{\T})$. To that end, let 
\[
           X := \left \{v \in W^{s,2}(\tilde{\T}):\ (v)_{\tilde{\T}}  = 0\right \}.
          \]
By Poincar\'e inequality $X$ endowed with the $[\cdot]_{W^{s,2}}$-seminorm is a Banach space.

Starting from $v_0 := 0$ define $v_{k+1} \in X$  inductively as the minimizer of $\inf_{X} \mathcal{E}_k(\cdot)$ where
\[
\mathcal{E}_k(v) :=  \frac{1}{2} \int_{\tilde{\T}}\int_{\tilde{\T}} \mu(x-y)\abs{v(x)-v(y)}^2 dx\, dy - g[\eta v] - \mathcal{H}(v_k,v) - \mathcal{G}(u,v) .
\]
The minimizer $v_{k+1} \in X$ exists by the direct method in the calculus of variations. The minimizer $v_{k+1}$ satisfies the Euler-Lagrange equation
\[
  \int_{\tilde{\T}}\int_{\tilde{\T}} \mu(x-y) {\brac{v_{k+1}(x)-v_{k+1}(y)}\brac{\psi(x)-\psi(y)}} dx\, dy = g[\eta\psi] + \mathcal{H}(v_k,\psi) + \mathcal{G}(u,\psi), 
\]
for all $\psi \in C^\infty_{per}\brac{\overline{\tilde{\T}}}.$

Denote by $(v_{k+1})_\tau=v_{k+1}\ast \phi_\tau$ the convolution with respect to $\tilde{\T}$ where  $\phi_\tau$ is a approximate identity in $C^{\infty}(\overline{\tilde{\T}})$. 
By the convolution type of the equation for $v_{k+1}$ we have 
\[
\begin{split}
  \int_{\tilde{\T}}\int_{\tilde{\T}}& \mu(x-y) {\brac{(v_{k+1})_{\tau}(x)-(v_{k+1})_\tau (y)}\brac{\psi(x)-\psi(y)}} dx\, dy \\
  &=  \int_{\tilde{\T}}\int_{\tilde{\T}} \mu(x-y) \brac{v_{k+1}(x)-v_{k+1} (y)}\brac{\psi_{\tau}(x)-\psi_{\tau}(y)} dx\, dy\\
  &= g[\eta\psi_\tau] + \mathcal{H}(v_k,\psi_\tau) + \mathcal{G}(u,\psi_\tau) \quad \forall \psi \in C^\infty_{per}\brac{\overline{\tilde{\T}}}.
  \end{split}
\]
We observe that the map
\[
\psi\in C^\infty_{per}\brac{\overline{\tilde{\T}}}\mapsto \Upsilon_{(k,\tau)} (\psi):=g[\eta\psi_\tau] + \mathcal{H}(v_k,\psi_\tau) + \mathcal{G}(u,\psi_\tau) 
\]
satisfies the estimate that 
\[
|\Upsilon_{(k,\tau)} (\psi)|\leq C\|\psi\|_{W^{\tilde{t}, q}} \left( \eps [v_{k}]_{W^{2s-\tilde{t},\tilde{q}'}(\tilde{\T})} + \brac{\|u\|_{L^2(\tilde{\T})} + [u]_{W^{s,2}(\tilde{\T})} + [u]_{W^{t,p}(\tilde{\T})}} +\Lambda\right)
\]
for a given $\epsilon>0$ and $K$ satisfying \eqref{eq:smallness} where the constant $C$ is uniform in $\tau, \epsilon,k$ for any $\epsilon$.
Since $(v_{k+1})_\tau \in C^\infty_{per}(\tilde{\T}) \subset W^{2s-\tilde{t},q'}_{per}(\tilde{\T})$ we can apply \Cref{apEst}. 
and obtain uniform $W^{2s-\tilde{t},q'}_{per}(\tilde{\T})$-estimates for $(v_{k+1})_\tau$, and as $\tau \to 0$ we thus obtain $v_{k+1} \in W^{2s-\tilde{t},q'}_{per}(\tilde{\T})$ with the estimate 
\[
 [v_{k+1}]_{W^{2s-\tilde{t},q'}(\tilde{\T})}  \aleq \eps [v_{k}]_{W^{2s-\tilde{t},q'}(\tilde{\T})} + \brac{\|u\|_{L^2(\tilde{\T})} + [u]_{W^{s,2}(\tilde{\T})} + [u]_{W^{t,p}(\tilde{\T})}} + \Lambda. 
 \]
Repeating this argument for $w_{k+1} := v_{k+1}-v_k$, which satisfies the equation 
\[
  \int_{\tilde{\T}}\int_{\tilde{\T}} \mu(x-y) {\brac{w_{k+1}(x)-w_{k+1} (y)}\brac{\psi(x)-\psi(y)}} dx\, dy = \mathcal{H}(w_k,\psi) \quad \forall \psi \in C^\infty_{per}\brac{\overline{\tilde{\T}}}.
\]
we obtain the estimate (recall \eqref{eq:smallness})
\[
 [w_k]_{W^{2s-\tilde{t},q'}(\tilde{\T})} \leq C\eps [w_{k-1}]_{W^{2s-\tilde{t},q'}(\tilde{\T})}.
\]
Since $(w_k)_{\tilde{\T}} = 0$, this implies that $v_k$ is convergent in $W^{2s-\tilde{t},q'}(\tilde{\T})$, and passing to the limit we find a solution $v \in W^{2s-\tilde{t},q'}(\tilde{\T})$ of \eqref{eq:newpde}.
\item Since $2s-\tilde{t} > s$ and $q'\geq 2$ we have $W^{2s-\tilde{t},q'}(\tilde{\T})\subset W^{s,2}(\tilde{\T})$ and thus by uniqueness (up to a constant) of $v$ for $W^{s,2}$-maps we have that any solution belongs to $W^{2s-\tilde{t},q'}(\tilde{\T})$. That concludes the proof. 

\end{proof}

\section{\texorpdfstring{$W^{t,p}$}{Wtp}-estimates, \texorpdfstring{$t > 1$}{t>1}: differentiating the equation}
Up to now we obtained $W^{t,p}$-estimates for $t < 1$ -- under suitable assumptions on the right-hand side, without assuming anything but continuity of the kernel involved. For $W^{t,p}$-estimates for $t \geq 1$, we need $t-1$-H\"older continuity of the kernel in order to differentiate the equation.

\begin{theorem}\label{th:diffeq}
Let $s \in (0,1)$ and $\alpha \in (0,1)$. Assume $K$ is as in \Cref{th:main}, and additionally we have H\"older continuity in the first variable,
\[
 |K(x,r,h)-K(y,r,h)| \leq \Lambda |x-y|^\alpha \quad \forall x,y \in \R^n, |h| = 1, r \geq 0.
\]
Suppose also that, for $t \in (s,1)$, $p \geq 2$, $u \in W^{t,p}\cap W^{s,2}(\R^n)$ is a solution of
\[
\int_{\R^n} \int_{\R^n}  K\brac{x,|x-y|,\frac{x-y}{|x-y|}} \frac{(u(x)-u(y))(\varphi(x)-\varphi(y))}{|x-y|^{n+2s}}\, dx\, dy = g[\varphi] \quad \forall \varphi \in C_c^\infty(\R^n). 
\]
Then for any $r \in (0,\min\{t, \alpha\})$, $\laps{r} u \in W^{t-r,2}(\R^n)$ satisfies
\begin{equation}\label{eq:highergoal}
\begin{split}
\int_{\R^n} \int_{\R^n}  &K\brac{x,|x-y|,\frac{x-y}{|x-y|}} \frac{(\laps{r} u(x)-\laps{r} u(y))(\varphi(x)-\varphi(y))}{|x-y|^{n+2s}}\, dx\, dh \\
&= g[\laps{r} \varphi] + \mathcal{H}(u,\varphi) \quad \forall \varphi \in C_c^\infty(\R^n), 
\end{split}
\end{equation}
where $\mathcal{H}(u,\varphi)$ is a bilinear form. 
Moreover, we have the following estimate for any for any $x_{0}\in \mathbb{R}^{n}$, $\eps > 0$,  and $R>0$ 
\begin{equation}\label{eq:highergoal:est}
 \mathcal{H}(u,\varphi) \aleq C(\eps, R)\, [u]_{W^{t,p}(\R^n)} (\|\varphi\|_{L^{p'}(\mathbb{R}^n)} + \|\varphi\|_{W^{2s-t+\eps,p'}(\R^n)} 
\end{equation}
for any $\varphi\in C_c^\infty(B(x_0, R))$.

\end{theorem}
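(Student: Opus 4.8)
The plan is to let \eqref{eq:highergoal} \emph{define} $\mathcal H$. First I would test the equation for $u$ against $\laps r\varphi$ in place of $\varphi$: for $\varphi\in C_c^\infty(B(x_0,R))$ and $r\in(0,1)$ the function $\laps r\varphi$ is smooth and decays like $|x|^{-n-r}$, hence lies in $L^{p'}(\R^n)\cap W^{s_2,p'}(\R^n)$, so $g[\laps r\varphi]$ is well defined by the hypothesis on $g$ and $\laps r\varphi$ is approximated in that norm by $C_c^\infty$ functions; this yields $\int\!\int K\brac{x,|x-y|,\tfrac{x-y}{|x-y|}}\tfrac{(u(x)-u(y))(\laps r\varphi(x)-\laps r\varphi(y))}{|x-y|^{n+2s}}=g[\laps r\varphi]$. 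I would then put
\[
 \mathcal H(u,\varphi):=\int\!\!\int K\brac{x,|x-y|,\tfrac{x-y}{|x-y|}}\,\tfrac{(\laps r u(x)-\laps r u(y))(\varphi(x)-\varphi(y))-(u(x)-u(y))(\laps r\varphi(x)-\laps r\varphi(y))}{|x-y|^{n+2s}}\,dx\,dy,
\]
so that \eqref{eq:highergoal} holds by construction; the asserted membership $\laps r u\in W^{t-r,2}(\R^n)$ (more precisely, of $\laps r u$ in the space making the left side of \eqref{eq:highergoal} finite) is immediate from $u\in W^{t,p}\cap W^{s,2}$ and the mapping properties of $\laps r$. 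All the content is then in the estimate \eqref{eq:highergoal:est}.

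The second step is to recognise $\mathcal H$ as a commutator. Swapping $x\leftrightarrow y$ shows that only the symmetrized kernel $k_s(x,y):=\tfrac12\big[K\brac{x,|x-y|,\tfrac{x-y}{|x-y|}}+K\brac{y,|x-y|,\tfrac{y-x}{|x-y|}}\big]|x-y|^{-n-2s}$ enters $\mathcal H$, so with $L_s w(x):=\mathrm{p.v.}\!\int k_s(x,y)(w(x)-w(y))\,dy$ (an operator of order $2s$) one has $\mathcal H(u,\varphi)=2\langle\varphi,[L_s,\laps r]u\rangle$. Freezing the coefficient at $x_0$ — $k^\ast(h):=\tfrac12\big(K(x_0,0,\tfrac h{|h|})+K(x_0,0,-\tfrac h{|h|})\big)|h|^{-n-2s}$, $k^\ast(x,y):=k^\ast(x-y)$, and $k_s=k^\ast+k^{\mathrm{dev}}$ with operators $L^\ast,L^{\mathrm{dev}}$ — and using that $L^\ast$ and $\laps r$ are commuting Fourier multipliers (the $\R^n$ analogue of \Cref{la:differentiating}), so $[L^\ast,\laps r]=0$, one gets
\[
 \mathcal H(u,\varphi)=2\langle\varphi,[L^{\mathrm{dev}},\laps r]u\rangle .
\]
By the Hölder hypothesis \eqref{eq:Khoelder} in the first slot and \eqref{eq:smallness} in the second, the deviation kernel obeys
\[
 |k^{\mathrm{dev}}(x,y)|\aleq \big(|x-x_0|^\alpha+|x-y|^\alpha\big)\,|x-y|^{-n-2s}\ \ (|x-y|\le 1),\qquad |k^{\mathrm{dev}}(x,y)|\aleq |x-y|^{-n-2s}\ \ (|x-y|\ge 1),
\]
i.e. $L^{\mathrm{dev}}$ is an order-$2s$ operator whose coefficient is $\alpha$-Hölder in $x$ and vanishes at $x_0$.

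To estimate $[L^{\mathrm{dev}},\laps r]$ I would insert the increment formula $\laps r w(x)-\laps r w(y)=c_{n,r}\!\int_{\R^n}\!\big(\tfrac{w(x)-w(z)}{|x-z|^{n+r}}-\tfrac{w(y)-w(z)}{|y-z|^{n+r}}\big)dz$ (applied to a mollification $u_\epsilon$ of $u$, with everything below uniform in $\epsilon$), use the algebraic identity
\[
 (a_x-a_z)(b_x-b_y)-(b_x-b_z)(a_x-a_y)=(a_y-a_z)(b_x-b_y)-(b_y-b_z)(a_x-a_y)
\]
to collapse the integrand bracket to $N(x,y,z):=(u(y)-u(z))(\varphi(x)-\varphi(y))-(\varphi(y)-\varphi(z))(u(x)-u(y))$, and symmetrize once more in $x\leftrightarrow y$, arriving at
\[
 \mathcal H(u,\varphi)=c\!\int\!\!\int\!\!\int k^{\mathrm{dev}}(x,y)\Big[(\varphi(x)-\varphi(y))\big(u(x)+u(y)-2u(z)\big)-(u(x)-u(y))\big(\varphi(x)+\varphi(y)-2\varphi(z)\big)\Big]\Big(\tfrac1{|x-z|^{n+r}}-\tfrac1{|y-z|^{n+r}}\Big)dx\,dy\,dz .
\]
This writes $\mathcal H$ as a pairing of second differences of $u$ against second differences of $\varphi$ with a kernel that, thanks to the factors $|x-y|^\alpha$ and $|x-x_0|^\alpha$ in $k^{\mathrm{dev}}$ and the antisymmetry of $|x-z|^{-n-r}-|y-z|^{-n-r}$, produces a net gain of $\alpha-r>0$ derivatives over the naive order $2s+r$ (this is where $r<\alpha$ enters). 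Splitting the $z$-integral into $|x-z|\aleq|x-y|$ and $|x-z|\ageq|x-y|$ — where $|\,|x-z|^{-n-r}-|y-z|^{-n-r}\,|\aleq|x-y|\,|x-z|^{-n-r-1}$ — bounding the $u$-increments by $[u]_{W^{t,p}(\R^n)}$ (using $r<t$) and the $\varphi$-increments by a $W^{2s-t+\eps,p'}$-norm, and absorbing the long-range pieces (where $\varphi$, $\laps r\varphi$ or $u$ interact at distance $\ageq R$) by the decay lemmas used for $\mathcal G_2,\dots,\mathcal G_5$ in the proof of \Cref{pr:rewrite}, I expect to reach
\[
 |\mathcal H(u,\varphi)|\aleq C(\eps,R)\,[u]_{W^{t,p}(\R^n)}\big(\|\varphi\|_{L^{p'}(\R^n)}+\|\varphi\|_{W^{2s-t+\eps,p'}(\R^n)}\big),
\]
which is \eqref{eq:highergoal:est}; the $\eps$-loss is the familiar Sobolev endpoint defect.

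The main obstacle will be this last step: making the triple-integral representation rigorous — the integrand is only conditionally convergent until $k^\ast$ has been frozen out and $r<\alpha$ exploited — and then threading the three scales $|x-y|,|x-z|,|y-z|$ precisely enough to land at the exponent $2s-t+\eps$, no better (because of the endpoint) and no worse. The identity $[L^\ast,\laps r]=0$ removes the genuinely singular part, after which everything is governed by the $\alpha$-Hölder deviation of the kernel; the far-field bookkeeping is routine but must be kept uniform, exactly as in the proof of \Cref{pr:rewrite}.
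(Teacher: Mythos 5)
Your opening steps --- defining $\mathcal{H}$ via $\mathcal{H}=\mathcal{E}(\laps{r}u,\varphi)-\mathcal{E}(u,\laps{r}\varphi)$ and recognizing it as a commutator --- are correct and match the paper. The gap is in the mechanism you invoke for the $\alpha$-gain. The paper does \emph{not} freeze the kernel at $(x_0,0)$: it passes to $(x,h)$-coordinates via $y=x+h$ and moves $\laps{r}$ onto $K(x,|h|,\tfrac{h}{|h|})\varphi(\cdot)-K(x,|h|,\tfrac{h}{|h|})\varphi(\cdot+h)$ in the $x$-variable \emph{with $h$ frozen}. The Leibniz-type commutator that results has kernel $\tfrac{K(x,|h|,h/|h|)-K(z,|h|,h/|h|)}{|x-z|^{n+r}}$; the two $K$-values differ only in the first slot, so \eqref{eq:Khoelder} gives a pointwise bound $\lesssim|x-z|^{\alpha-n-r}$, which is locally integrable near $z=x$ precisely because $r<\alpha$. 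Combined with $|\varphi(z)-\varphi(z+h)|\lesssim|h|^\sigma\big(\mathcal{M}\laps{\sigma}\varphi(z)+\mathcal{M}\laps{\sigma}\varphi(z+h)\big)$, this is what delivers the $\kappa$-estimate and, after Hölder in $(x,y)$ with weight $|x-y|^{-n-(2s-t)p'}$, the $W^{2s-t+\eps,p'}$-norm of $\varphi$.

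Your decomposition $k_s=k^\ast+k^{\mathrm{dev}}$ with $k^\ast(h)=K(x_0,0,h/|h|)|h|^{-n-2s}$ (symmetrized) loses this structure. The claimed bound $|k^{\mathrm{dev}}(x,y)|\lesssim(|x-x_0|^\alpha+|x-y|^\alpha)|x-y|^{-n-2s}$ is unjustified: the $|x-y|^\alpha$ part would require a Hölder rate for $r\mapsto K(x_0,r,h)$, but \eqref{eq:smallness} only supplies a qualitative modulus of continuity in the second slot, with no rate. Moreover the surviving $|x-x_0|^\alpha$ factor is merely $O(R^\alpha)$ on $\supp\varphi$ and does \emph{not} vanish as $z\to x$, so it cannot offset the $|x-z|^{-n-r}$ singularity in your triple integral. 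The $\alpha$-gain must live on the $\laps{r}$-scale $|x-z|$, not on $|x-x_0|$ or $|x-y|$; freezing at a reference point places it on the wrong scale. There is also a structural issue with computing $[L_s,\laps{r}]$ directly in $(x,y)$-variables: changing $x\to z$ with $y$ fixed changes not only the first slot of $K$ but also $|x-y|$ and $\tfrac{x-y}{|x-y|}$, and those are singular changes not controlled by \eqref{eq:Khoelder}. The paper's device of working in $(x,h)$ with $h$ frozen is designed exactly to avoid this.
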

\begin{proof}
Let $r \in (0,t-s)$. Then $\laps{r} u \in W^{s,2}(\R^n)$, and we have for  any $\varphi \in C_c^\infty(\R^n)$, 
\[
\begin{split}
 &\int_{\R^n} \int_{\R^n}  K\brac{x,|x-y|,\frac{x-y}{|x-y|}} \frac{(\laps{r}u(x)-\laps{r}u(y))(\varphi(x)-\varphi(y))}{|x-y|^{n+2s}}\, dx\, dy \\
 =&\int_{\R^n} \int_{\R^n}  K\brac{x,|h|,\frac{h}{|h|}} \frac{(\laps{r}_x u(x)-\laps{r}_xu(x+h))(\varphi(x)-\varphi(x+h))}{|h|^{n+2s}}\, dx\, dh  \\
=& \int_{\R^n} \int_{\R^n}  \frac{(u(x)-u(x+h))(\laps{r}_x \brac{K\brac{x,|h|,\frac{h}{|h|}} \varphi(x)}-\laps{r}_x \brac{K\brac{x,|h|,\frac{h}{|h|}} \varphi(x+h))}}{|h|^{n+2s}}\, dx\, dh  \\
  =&\int_{\R^n} \int_{\R^n}  K\brac{x,|x-y|,\frac{x-y}{|x-y|}} \frac{(u(x)-u(y))(\laps{r}\varphi(x)-\laps{r}\varphi(y))}{|x-y|^{n+2s}}\, dx\, dh \\
 &+\int_{\R^n} \int_{\R^n}  \frac{(u(x)-u(x+h))\, \kappa(x,|h|,\frac{h}{|h|})}{|h|^{n+2s}}\, dx\, dh  ,
 \end{split} 
\]
where 
\[
\kappa(x,\rho,\frac{h}{|h|}) = c\int_{\R^n} \frac{\brac{K\brac{x,\rho,\frac{h}{|h|}} - K\brac{z,\rho,\frac{h}{|h|}}} \brac{\varphi(z)-\varphi(z+h)}}{|x-z|^{n+r}}\, dz.
\]
Here we have applied the elementary formula
\[
 \laps{r} (ab)(x) - a(x) \laps{r} b(x) = c \int_{\R^n}  \frac{(a(x)-a(z))b(z)}{|x-z|^{n+r}}\, dz. 
\]

So we have shown \eqref{eq:highergoal} for
\[
\begin{split}
 \mathcal{H}(u,\varphi)  :=& \int_{\R^n} \int_{\R^n}  \frac{(u(x)-u(y))\, \kappa(x,|x-y|,\frac{x-y}{|x-y|})}{|x-y|^{n+2s}}\, dx\, dy.
 \end{split}
\]
For a given $\epsilon > 0$ and $R>0$,  it remains to prove the estimate \eqref{eq:highergoal:est} for $\mathcal{H}(u,\varphi)$ for any $\varphi\in C_c^\infty(B(x_0, R))$.  To that end, first by H\"older's inequality,
\[
 |\mathcal{H}(u,\varphi)| \leq [u]_{W^{t,p}(\R^n)} \brac{\int_{\R^n} \int_{\R^n} \frac{|\kappa\brac{x,|x-y|,\frac{x-y}{|x-y|}}|^{p'}}{|x-y|^{n+(2s-t)p'}}\, dx\, dy}^{\frac{1}{p'}}.
\]

Recall (cf. \cite[Proposition 6.6.]{S18Arma}) that for any $\sigma \in (0,1)$,
\[
 |f(z)-f(z+h)| \aleq |h|^\sigma \brac{\mathcal{M}\laps{\sigma} f(z)+\mathcal{M}\laps{\sigma} f(z+h)},
\]
where $\mathcal{M}$ is a finite power of the maximal function. Then, by boundedness and the $\alpha$-H\"older continuity of $K$, \eqref{eq:Khoelder}, for any $\beta \in (r,\alpha)$ and  $\sigma\in (0, 1)$ we have 
\begin{equation}\label{eq:kappest23423}
\begin{split}
 &|\kappa(x,|x-y|,\frac{x-y}{|x-y|})| \\
 \aleq&|x-y|^\sigma\, \int_{\R^n} |x-z|^{\beta-r-n}\,  \brac{\mathcal{M}\laps{\sigma}\varphi(z) + \mathcal{M}\laps{\sigma}\varphi(z+y-x)}dz\\
  \aeq&|x-y|^\sigma \brac{\lapms{\beta-r} \mathcal{M} \laps{\sigma} \varphi(x) + \lapms{\beta-r} \mathcal{M} \laps{\sigma} \varphi(y)}.
 \end{split}
\end{equation}
Now writing 
\[
\begin{split}
&\int_{\R^n} \int_{\R^n} \frac{|\kappa\brac{x,|x-y|,\frac{x-y}{|x-y|}}|^{p'}}{|x-y|^{n+(2s-t)p'}}\, dx\, dy\\
 &\leq   \int_{\R^n} \int_{|x-y| \leq 1} \frac{|\kappa\brac{x,|x-y|,\frac{x-y}{|x-y|}}|^{p'}}{|x-y|^{n+(2s-t)p'}}\, dx\, dy + \int_{\R^n} \int_{|x-y| \geq 1} \frac{|\kappa\brac{x,|x-y|,\frac{x-y}{|x-y|}}|^{p'}}{|x-y|^{n+(2s-t)p'}}\, dx\, dy\\
 &:=I_1 + I_2.
\end{split}
\]
We estimate $I_1$ and $I_2$ separately. 
We estimate $I_1$ first. Choose $\epsilon>0$ such that $2s-t+\eps <1$. This choice is possible because $t>s$.
Now for any $\sigma_1 \in (2s-t, 2s-t+\eps)$  
we apply  \eqref{eq:kappest23423} to obtain that 
\[
\begin{split}
I_1&\aleq  \int_{\R^n} \int_{|x-y| \leq 1} {\frac{|\lapms{\beta-r} \mathcal{M} \laps{\sigma_1} \varphi(x) |^{p'}}{|x-y|^{n+(2s-t-\sigma_1)p'}}} dxdy+  \int_{\R^n} \int_{|x-y| \leq 1} {\frac{|\lapms{\beta-r} \mathcal{M} \laps{\sigma_1} \varphi(y) |^{p'}}{|x-y|^{n+(2s-t-\sigma_1)p'}}} dxdy\\
&\aleq \int_{\R^n}  {|\lapms{\beta-r} \mathcal{M} \laps{\sigma_1} \varphi(x) |^{p'}}dx,
\end{split}
\]
where use the fact that $\int_{|h|\leq 1}{\frac{1}{|h|^{n+(2s-t-\sigma_1)p'} }} dh <\infty,$ since $2s-t-\sigma_1 <0.$  Similarly, to estimate $I_2$  we apply  \eqref{eq:kappest23423} for $\sigma_{2}\in (0, 2s-t)$ and use the fact that  $\int_{|h|\geq 1}{\frac{1}{|h|^{n+(2s-t-\sigma_2)p'} }} dh <\infty$  to obtain that  
\[
I_{2} \aleq \int_{\R^n}  {|\lapms{\beta-r} \mathcal{M} \laps{\sigma_2} \varphi(x) |^{p'}}dx. 
\]
 Thus we have shown that 
\[
\brac{\int_{\R^n} \int_{\R^n} \frac{|\kappa\brac{x,|x-y|,\frac{x-y}{|x-y|}}|^{p'}}{|x-y|^{n+(2s-t)p'}}\, dx\, dy}^{1/p'} \\
 \aleq \|\lapms{\beta-r} \mathcal{M} \laps{\sigma_1} \varphi\|_{L^{p'}(\R^n)} + \|\lapms{\beta-r} \mathcal{M} \laps{\sigma_2} \varphi\|_{L^{p'}(\R^n)}.
\]

{Now we use the the continuous embedding $H^{t, p}(\mathbb{R}^{n})\subset H^{s,p}(\mathbb{R}^n)$ for $0<s <t$ and $1<p<\infty$, see  \cite{RS96},  with the inequality 
$[f]_{H^{s,p}} \aleq \|f\|_{L^p} + [f]_{H^{t,p}}$, for any given $\delta>0$ small, we may choose $\beta>r$ sufficiently close to $r$ so that 
\[
\begin{split}
\brac{\int_{\R^n} \int_{\R^n} \frac{|\kappa\brac{x,|x-y|,\frac{x-y}{|x-y|}}|^{p'}}{|x-y|^{n+(2s-t)p'}}\, dx\, dy}^{1/p'} \aleq \|\varphi\|_{L^{p'-\delta}(\R^n)} + \|\laps{\sigma_1} \varphi\|_{L^{p'-\delta}(\R^n)}.
\end{split}
\]
After noting that $\varphi$ is supported on $B(x_0,R)$, we have that $ \|\varphi\|_{L^{p'-\delta}(\R^n)} \aleq  \|\varphi\|_{L^{p'}(\R^n)}$. Also using \cite[Lemma 2.3]{MSYHoelder20} and the usual disjoint support argument we have 
\[
\|\laps{\sigma_1} \varphi\|_{L^{p'-\delta}(\R^n)} \aleq \| \varphi\|_{L^{p'}(\R^n)} + \|\laps{\sigma_1} \varphi\|_{L^{p'}(\R^n)}
\]
Since $\sigma_1 < 2s-t+\eps$ we can use yet again Sobolev embedding to obtain (here the constant depends on $\eps > 0$)
\[
\brac{\int_{\R^n} \int_{\R^n} \frac{|\kappa\brac{x,|x-y|,\frac{x-y}{|x-y|}}|^{p'}}{|x-y|^{n+(2s-t)p'}}\, dx\, dy}^{1/p'} 
 \aleq \|\varphi\|_{L^{p'}(\R^n)} + [\varphi]_{W^{2s-t+\eps,p'}(\R^n)}.
\]}
This proves estimate \eqref{eq:highergoal:est}.
\end{proof}

\section{Proof of main results}
\begin{proof}[Proof of \Cref{th:main}]
The proof uses an  iterative argument similar to the one used in \cite{SYCZ20}. Here we present a sketch of the argument.  Let $k \in \N$, and choose bounded sets $\Omega' \subset \Omega_k \subset \Omega_{k-1} \subset \ldots \Omega_1 \subset \Omega=B(x_0, \mathfrak{R})$. Cover $\Omega_1$ by finitely many balls $B(x_i, 5R)$ where $R$ is chosen such that $B(x_i, 60\sqrt{n}R)\subsubset \Omega$, $i=1, 2, \cdots, l$.  
Applying \Cref{pr:rewrite} and then  \Cref{th:fixedpoint} we obtain that $u \in W^{s_1,p_1}(B(x_i, 5R))$ (since $u = v$ in $B(x_i, 5R)$), and thus $u \in W^{s_1,p_1}(\Omega_1)$, for some $s_1 \geq s$ and $p_1\geq p$. Repeat this procedure on $\Omega_2$ and we find for some $s_2 \geq s_1$ and $p_2 \geq p_1$ that $u \in W^{s_2,p_2}(\Omega_2)$. After $k$ steps we have obtained $u \in W^{s_k,p_k}(\Omega')$. We can choose $s_k$ and $p_k$ so that \Cref{pr:rewrite} and \Cref{th:fixedpoint} are applicable in each step and after $k$ steps we have $p_k = p$ and $s_k = s$, and thus the theorem is proven.
\end{proof}

\begin{proof}[Proof of \Cref{th:main:above1}]
We may assume that $2s > 1$, otherwise there is nothing to show. By the \Cref{th:main} we have already obtained a $W^{t,p}_{loc}$-estimate for any $t < 1$. Let $r : = t-s$ for $t < 1$, $t \approx 1$ then applying \Cref{th:diffeq}, we obtain that $\laps{r} u$ satisfies an equation to which we can apply (locally) \Cref{th:main}. In this way we can keep bootstrapping to the claim.
\end{proof}

\begin{proof}[Proof of \Cref{co:main:diffeo}]
Set 
$
 K(x,r,h) := \brac{\frac{r}{|\phi(x)-\phi(x+rh)|}}^{n+2s}
$
and recall that $\phi$ bi-Lipschitz, that is
\begin{equation}\label{eq:bilipest}
 0 < \inf_{x \in \Omega, r \geq 0, |h| =1} \frac{|\phi(x)-\phi(x+rh)|}{|x-y|} < \sup_{x \in \Omega, r \geq 0, |h| =1} \frac{|\phi(x)-\phi(x+rh)|}{|x-y|}   < \infty.
\end{equation}
We conclude that $K$ is bounded from below (by a positive number) and above. Also $K$ is differentiable in $C^\alpha$ with respect to $x$. Indeed
\[
\begin{split}
 &\abs{\frac{r}{|\phi(x)-\phi(x+rh)|} - \frac{r}{|\phi(y)-\phi(y+rh)|}}\\
 \leq&\frac{r}{|\phi(x)-\phi(x+rh)|\, |\phi(y)-\phi(y+rh)|} \abs{|\phi(y)-\phi(y+rh)|-|\phi(x)-\phi(x+rh)|}\\
 \leq&\frac{r}{|\phi(x)-\phi(x+rh)|\, |\phi(y)-\phi(y+rh)|} \abs{\phi(y)-\phi(y+rh)-(\phi(x)-\phi(x+rh))}.
 \end{split}
\]
By the fundamental theorem of calculus,
\[
\begin{split}
 &\abs{\phi(y)-\phi(y+rh)-\phi(x)-\phi(x+rh)} \\
 \leq &|r| \int_{0}^1 \abs{D\phi(y+trh)-D\phi(x+trh)}\, dt\\
 \aleq &|r| |x-y|^\alpha\\
 \end{split}
\]
Thus we have shown
\[
\begin{split}
 &\abs{\frac{r}{|\phi(x)-\phi(x+rh)|} - \frac{r}{|\phi(y)-\phi(y+rh)|}}\\
 &\aleq\frac{r^2}{|\phi(x)-\phi(x+rh)|\, |\phi(y)-\phi(y+rh)|} |x-y|^\alpha\\
&\aleq |x-y|^\alpha,
\end{split}
\]
where the last inequality follows from  \eqref{eq:bilipest}. 
Since $K(x,r,h)$ is a positive power of this expression and $K(x,r,h)$ is bounded, we also get the $C^\alpha$-H\"older continuity of $K$.

Now the claim follows from \Cref{th:main} and \Cref{th:main:above1} if necessary.
\end{proof}

\section{Applications to the p-Laplacian: Proof of Corollary~\ref{co:plap}}\label{s:co:plap}
The main idea of \Cref{co:plap} is that locally around a point $x_0$,
\[
 |u(x)-u(y)|^{p-2} \aeq |x-y|^{p-2} \abs{\nabla u(x_0) \frac{x-y}{|x-y|}}^{p-2}.
\]
The kernel $K(h) := |\nabla u(x_0) \frac{h}{|h|}|^{p-2}$ satisfies the ellipticity assumptions as in \eqref{eq:lambdaangleass}.

So one expects that we can rewrite \eqref{eq:plapeq} into 
\[
 \int_{\Omega} \int_{\Omega} \frac{K(\frac{x-y}{|x-y|}) (u(x)-u(y))\, (\varphi(x)-\varphi(y))}{|x-y|^{n+sp-p+2}}\, dx\, dy = \mathcal{F}(u,\varphi) + \int_{\Omega} f \varphi
\]
where $\mathcal{F}$ is a small distortion. Since by assumption $sp-p+2 > 0$ this is still a nonlocal differential equation of the type treated in our paper. Differentiating both sides, we expect an equation of the form
\[
 \int_{\Omega} \int_{\Omega} \frac{K(\frac{x-y}{|x-y|}) (\partial_\alpha u(x)-\partial_\alpha u(y))\, (\varphi(x)-\varphi(y))}{|x-y|^{n+sp-p+2}}\, dx\, dy = \mathcal{F}(\partial_\alpha u,\varphi) + \int_{\Omega} \partial_\alpha f \varphi
\]
To make this more precise we apply discrete differentiation. Set
\[
 \delta_\tau f(x) := f(x+\tau) - f(x).
\]
Then we have the following result which obtains an equation for $\delta_\tau u$ to which \Cref{th:main} is applicable.

\begin{proposition}\label{pr:plaprew}
Let $s > 1-\frac{2}{p}$, $p \geq 2$, $s \in (0,1)$, $\Omega \subseteq \R^n$ open. Let $u \in L^\infty(\R^n) \cap W^{s,p}(\R^n)$ be a solution to \eqref{eq:plapeq}.

Assume that moreover $u \in W^{\frac{sp-p+2}{2},2}(B(x_0,60\sqrt{n}R)) \cap C^{1}(B(x_0,60\sqrt{n}R))$ where the ball $B(x_0,60\sqrt{n}R) \subset \Omega$.

For each $\tau  \in B(0,R)$ there exist $K_\tau(x,r,h)$ that satisfies the conditions of \Cref{th:main}, and  for all $\psi \in C_c^\infty\brac{B(x_0,\frac{1}{2} R)}, 
$
\begin{equation}\label{eq:plap:newpde}
 \int_{\Omega}\int_{\Omega} K_\tau(x,y) \brac{\delta_{\tau}u(x)-\delta_{\tau}u(y)}\brac{\psi(x)-\psi(y)} dx\, dy = \mathcal{G}_\tau(u,\psi) + \mathcal{H}_\tau(f,\psi) \quad\end{equation}
where $\mathcal{F}$ and $\mathcal{G}$ and $\mathcal{H}$ are bilinear and have the following properties: for any  $\beta > 1$ 
\begin{equation}\label{eq:plap:hest}
 |\mathcal{H}_\tau(f,\psi)| \leq C(\beta,\eta,\tilde{\eta}) |\tau| \|f\|_{C^{\beta}}\, \|\psi\|_{L^1} 
\end{equation}
and for any $q \geq p$,
\[
 |\mathcal{G}_{\tau}(u,\psi)|\aleq \|u\|_{L^\infty(\Omega)}^{p-2}\, \brac{|\tau| + \|\delta_\tau u\|_{L^{q}(B(x_0,R))}+\|\delta_\tau u\|_{L^{2}(\R^n)}} \|\psi\|_{L^{q'}(B(x_0,R))} 
\]
Moreover we have that if $u \in C^{1,\gamma}(B(x_0, R))$ then $K_\tau$ can be chosen in $C^\gamma(\Omega)$.
\end{proposition}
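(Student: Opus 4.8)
The plan is to differentiate \eqref{eq:plapeq} discretely and to linearise the $p$-Laplacian around $u$, turning this into an equation for $\delta_\tau u$ of the type in \Cref{th:main}, with everything else pushed into $\mathcal{G}_\tau$ and $\mathcal{H}_\tau$. Set $2\tilde s:=sp-p+2$; the hypothesis $1-\tfrac2p<s<1$ gives $\tilde s\in(0,1)$, and $|x-y|^{n+sp}=|x-y|^{n+2\tilde s}|x-y|^{p-2}$. Write $u_\tau:=u(\cdot+\tau)$. For $\psi\in C_c^\infty(B(x_0,\tfrac12 R))$ and $|\tau|<R$ I would test \eqref{eq:plapeq} once with $\psi$ and once with $\psi(\cdot-\tau)$ (both admissible since $B(x_0,R)\subset B(x_0,60\sqrt nR)\subset\Omega$), change variables in the second integral and subtract, obtaining
\[
 \int_{\R^n}\int_{\R^n}\frac{\big(|u_\tau(x)-u_\tau(y)|^{p-2}(u_\tau(x)-u_\tau(y))-|u(x)-u(y)|^{p-2}(u(x)-u(y))\big)(\psi(x)-\psi(y))}{|x-y|^{n+sp}}\,dx\,dy=\int_{\R^n}\delta_\tau f\,\psi\,dx+\mathcal{R}_\tau(u,\psi),
\]
where $\mathcal{R}_\tau$ collects the discrepancies from enlarging $\Omega,\Omega-\tau$ to $\R^n$: since $\supp\psi$ is far from $\partial\Omega$ its kernel is bounded and supported in $\{|x-y|\gtrsim R\}$, and the layer $\Omega\triangle(\Omega-\tau)$ contributes a term of size $O(|\tau|)$. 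Next I would apply $|a|^{p-2}a-|b|^{p-2}b=(p-1)\big(\int_0^1|b+t(a-b)|^{p-2}\,dt\big)(a-b)$ with $a=u_\tau(x)-u_\tau(y)$, $b=u(x)-u(y)$, $a-b=\delta_\tau u(x)-\delta_\tau u(y)$, so the numerator becomes $\Phi_\tau(x,y)(\delta_\tau u(x)-\delta_\tau u(y))$ with $\Phi_\tau(x,y):=(p-1)\int_0^1|(u(x)-u(y))+t(\delta_\tau u(x)-\delta_\tau u(y))|^{p-2}\,dt$; dividing by $|x-y|^{p-2}$ produces the kernel $\kappa_\tau(x,r,h):=\Phi_\tau(x,x-rh)\,r^{-(p-2)}$, which in difference-quotient form extends continuously to $r=0$ wherever $u\in C^1$, is bounded there by $C\|\nabla u\|_{L^\infty}^{p-2}$, and is bounded by $C_R\|u\|_{L^\infty(\R^n)}^{p-2}$ whenever $r\gtrsim R$.

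The delicate point is turning $\kappa_\tau$ into an admissible coefficient for \Cref{th:main}, which requires a single bounded, globally defined $K_\tau$, uniformly elliptic on a fixed cone $\Sigma$ for \emph{every} $x\in\R^n$, continuous near $x_0$, and (for the last assertion) Hölder in $x$ on all of $\Omega$ — none of which $\kappa_\tau$ has, being defined through the pointwise values of $u$ and singular on the diagonal away from $x_0$. I would first shrink $R$ (which is harmless, as \Cref{co:plap} only asks for a small ball) so that, by continuity of $\nabla u$ and $\nabla u(x_0)\neq0$, $|\nabla u-\nabla u(x_0)|$ is as small as we wish on $B(x_0,R)$; then fix cutoffs $\theta\in C_c^\infty(B(x_0,R))$ and $\theta_0\in C_c^\infty([0,R))$, each $\equiv1$ near the origin, with their scales tuned so that $\{\theta\equiv1\}$ contains $\supp\psi$ enlarged by the diagonal width of $\theta_0$ while $\supp(\theta\theta_0)$ keeps $x-rh\in B(x_0,R)$, and set
\[
 K_\tau(x,r,h):=\theta(x)\theta_0(r)\,\kappa_\tau(x,r,h)+\big(1-\theta(x)\theta_0(r)\big)(p-1)\,|\nabla u(x_0)\cdot h|^{p-2}.
\]
On $\supp(\theta\theta_0)$ the first term is bounded and continuous in $(x,r,h)$, and $C^\gamma$ in $x$ when $u\in C^{1,\gamma}(B(x_0,R))$ (write the difference quotients of $u$ and $u_\tau$ as averages of $\nabla u,\nabla u_\tau$ and compose with $|\cdot|^{p-2}$, using $p\geq2$ and $|\nabla u|\lesssim1$); the second term is smooth in $x$. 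Taking $\Sigma:=\{h\in\S^{n-1}:|\nabla u(x_0)\cdot h|\geq c_0|\nabla u(x_0)|\}$ with $c_0$ small, the smallness of $|\nabla u-\nabla u(x_0)|$ on $B(x_0,R)$ and of the oscillation of $\nabla u$ at scale $|\tau|$ gives $K_\tau(x,0,h)\geq\eta>0$ for all $x\in\R^n$, $h\in\Sigma$, i.e.\ \eqref{eq:lambdaangleass}, while \eqref{eq:smallness} near $x_0$ follows from joint continuity of $K_\tau$ and compactness of $\S^{n-1}$. Hence $K_\tau$ satisfies all hypotheses of \Cref{th:main}, and $K_\tau\in C^\gamma(\Omega)$ when $u\in C^{1,\gamma}(B(x_0,R))$.

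Finally I would reconcile the two forms of the left-hand side. By the scale choice, whenever $\psi(x)-\psi(y)\neq0$ (so $x$ or $y$ lies in $\supp\psi$) and $|x-y|$ is below the diagonal width of $\theta_0$, both points lie in $\{\theta\equiv1\}$ with $\theta_0(|x-y|)=1$, so there $K_\tau(x,|x-y|,\tfrac{x-y}{|x-y|})=\kappa_\tau(x,|x-y|,\tfrac{x-y}{|x-y|})$: no discrepancy near the diagonal in the relevant range. Subtracting $\int_\Omega\int_\Omega K_\tau(x,|x-y|,\tfrac{x-y}{|x-y|})\tfrac{(\delta_\tau u(x)-\delta_\tau u(y))(\psi(x)-\psi(y))}{|x-y|^{n+2\tilde s}}\,dx\,dy$ from the linearised equation of the first paragraph then leaves only contributions with $|x-y|\gtrsim R$ (or with a point outside $\Omega$), on which every kernel is bounded; by Hölder's inequality and the elementary tail estimate $\sup_x\int_{|x-y|\geq cR}|x-y|^{-n-2\tilde s}|w(y)|\,dy\aleq_{R}\|w\|_{L^{q'}}+\|w\|_{L^2}$, these together with $\mathcal{R}_\tau$ are controlled by $C_R\|u\|_{L^\infty(\Omega)}^{p-2}\big(|\tau|+\|\delta_\tau u\|_{L^q(B(x_0,R))}+\|\delta_\tau u\|_{L^2(\R^n)}\big)\|\psi\|_{L^{q'}(B(x_0,R))}$; collecting them defines $\mathcal{G}_\tau(u,\psi)$ with the stated bound, and $\mathcal{H}_\tau(f,\psi):=\int_{\R^n}\delta_\tau f\,\psi\,dx$ obeys $|\mathcal{H}_\tau(f,\psi)|\leq|\tau|\,\|f\|_{C^1}\|\psi\|_{L^1}\leq|\tau|\,\|f\|_{C^\beta}\|\psi\|_{L^1}$ for $\beta\geq1$, i.e.\ \eqref{eq:plap:hest}; this gives \eqref{eq:plap:newpde}. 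The one genuine obstacle is the middle step: building $K_\tau$ so that it is simultaneously a legitimate global coefficient (bounded, uniformly elliptic on $\Sigma$ for every $x$, continuous near $x_0$, Hölder on $\Omega$) and equal to the true linearised kernel in a diagonal neighbourhood wide enough to cover $\supp\psi$ — this forces both the shrinking of $R$ (so $\nabla u$ stays near $\nabla u(x_0)$) and the careful matching of the scales of $\theta$, $\theta_0$ and $\supp\psi$; the remainder is bilinear bookkeeping of off-diagonal terms in the spirit of \Cref{pr:rewrite}.
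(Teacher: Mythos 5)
Your proposal follows the same route as the paper's proof: discretely differentiate \eqref{eq:plapeq} (test against $\psi$ and its translate, i.e.\ discrete integration by parts of $\delta_\tau$), linearise the nonlinearity via the fundamental theorem of calculus to produce the kernel $r^{2-p}(p-1)\int_0^1 |t(u(x)-u(y))+(1-t)(u_\tau(x)-u_\tau(y))|^{p-2}\,dt$, verify boundedness, continuity at $(x_0,0,h)$ and cone-ellipticity from $\nabla u(x_0)\neq 0$, then push the off-diagonal and domain-discrepancy contributions into $\mathcal{G}_\tau$ and $\mathcal{H}_\tau$. The one place you add genuine detail is the explicit cutoff construction $K_\tau=\theta\theta_0\,\kappa_\tau+(1-\theta\theta_0)(p-1)|\nabla u(x_0)\cdot h|^{p-2}$, which fills in the step the paper compresses to ``it is now easy to extend $\tilde K_\tau$ to a kernel that satisfies the conditions of \Cref{th:main} in all of $\Omega$,'' and you also use the correct factor $(p-1)$ where the paper's displayed $\tilde K_\tau$ has a typographical $(p-2)$.
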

\begin{proof}
In the following we center all balls at $x_0$, i.e. we write $B(R)$ instead of $B(x_0,R)$. 

Let $\psi \in C_c^\infty(B(\lambda R))$ and set 
\[
 \mathcal{H}_\tau(f,\psi) := \int_{\R^n} \psi  \delta_\tau f.
\]
Observe that $\psi$ is localizing this so that the integration domain lies well within $\Omega$, and thus we have \eqref{eq:plap:hest}
\[
 |\mathcal{H}_\tau(f,\psi)| \leq C(\beta,\eta,\tilde{\eta}) |\tau| \|f\|_{C^{\beta}}\, \|\psi\|_{L^1(\Omega)}.
\]
which readily implies \eqref{eq:plap:hest}.

We begin now our computations by observing that from \eqref{eq:plapeq}
\[
\begin{split}
& \int_{\Omega}\int_{\Omega} \frac{|u(x)-u(y)|^{p-2}(u(x)-u(y)) 
\brac{\delta_{-\tau} \psi(x)-\delta_{-\tau} \psi(y)}}{|x-y|^{n+sp}}\, dx\, dy\\
 =& \mathcal{H}_\tau(f,\psi).
 \end{split}
\]
Next we have by the discrete integration by parts of $\delta_\tau$,
\[
 \begin{split}
& \int_{\Omega}\int_{\Omega} \frac{\delta_\tau \brac{|u(x)-u(y)|^{p-2}(u(x)-u(y))} \brac{ \psi(x)- \psi(y)}}{|x-y|^{n+sp}}\, dx\, dy\\
=& \mathcal{G}_{1, \tau}(u,\psi)+ \mathcal{H}_\tau(f,\psi).
\end{split}
\]
where with $D=[\Omega\times \Omega] \lap [(\Omega -\tau) \times (\Omega-\tau)]$, 
\[
 \mathcal{G}_{1,\tau}(u,\psi) = \iint_{D} \frac{\brac{|u(x+\tau)-u(y+\tau)|^{p-2}(u(x+\tau)-u(y+\tau))} \brac{ \psi(x)- \psi(y)}}{|x-y|^{n+sp}}\, dx\, dy\\
\]
We can now define $K_\tau$. Set 
\[
 \tilde{K}_\tau(x,r,h) := r^{2-p}(p-2)\int_0^1 \abs{t \brac{u(x)-u(x+rh)} + (1-t) \brac{u(x+\tau)-u(x+\tau+rh)}}^{p-2}
\]
We then have by the fundamental theorem of calculus (and since for $g(a) = |a|^{p-2}a$ we have $g'(a) = (p-1)|a|^{p-2}$)
\[
 \delta_\tau \brac{|u(x)-u(y)|^{p-2}(u(x)-u(y))} = |x-y|^{p-2} \tilde{K}_\tau\brac{x,|x-y|,\frac{x-y}{|x-y|}} \brac{\delta_\tau u(x) - \delta_\tau u(y)}
\]
Thus we have shown 
\[
 \begin{split}
&\int_{\Omega}\int_{\Omega} \frac{\tilde{K}_\tau\brac{x,|x-y|,\frac{x-y}{|x-y|}} (\delta_\tau u(x)-\delta_\tau u(y)) \brac{ \psi(x)- \psi(y)}}{|x-y|^{n+sp+2-p}}\, dx\, dy\\
=& \mathcal{G}_{1,\tau}(u,\psi)+ \mathcal{H}_\tau(f,\psi).
\end{split}
\]
Observe that for $x \in B(R)$ and $|r| \leq R$ and $|h| =1$ we have by the Lipschitz continuity of $u$ in $B(0,10R)$ that 
\[
 | \tilde{K}(x,r,h) | \leq C
\]
Moreover, since $u$ is continuously differentiable in $B(R)$,  for each $x \in B(x_0,\lambda R)$, $\lambda$, $|\tau|$ $r$ and $h$ small,
\[
\begin{split}
 \tilde{K}_\tau(x,r,h) =& (p-2)\int_0^1 \abs{t \brac{Du(x)+o_{r}(1)}h + (1-t) \brac{Du(x+\tau)+o_r(1)}h }^{p-2}\\
 =& (p-2)\int_0^1 \abs{t \brac{Du(x_0)+o_{\lambda}(1)+o_{r}(1)}h + (1-t) \brac{Du(x_0)+o_{\lambda+|\tau|}+o_r(1)}h }^{p-2}\\
 \end{split}
\]
Since $p-2 > 0$ (if $p=2$ there is nothing to show),
\[
 |\tilde{K}_\tau(x,r,h)-\tilde{K}_\tau(x_0,0,h)| \aleq o_r(1) + o_{\lambda+|\tau|}(1),
\]
so we have the required continuity from \Cref{th:main}.

Next, we show positivity of $\tilde{K}_\tau(x_0,0,h)$. Observe,
\[
\begin{split}
 \tilde{K}_\tau(x_0,0,h) =& (p-2)\int_0^1 \abs{Du(x_0)\cdot h + (1-t) o_{|\tau|}(1) h}^{p-2}\\
 \end{split}
\]
Let $v:= |\nabla u(x_0)|^{-1} \nabla u(x_0) \in \S^{n-1}$ (by assumption $|\nabla u(x_0)| > 0$).  Then, we can take $\Sigma$ a small neighbourhood of $v$, so that 
\[
 \inf_{h\in\Sigma} |\langle h,v \rangle | \geq \sigma >0
\]
for a given small $\sigma$ given. 
For each $h \in \Sigma$ let $t_0 > 0$ such that $ |\nabla u(x_0)|^{-1}|(1-t_0)o_{|\tau|}(1)| \leq \frac{\sigma}{2}$. Then
\[
|\nabla u(x_0)|^{-1}  \abs{Du(x_0)\cdot h + (1-t) o_{|\tau|}(1) h} \geq \frac{\sigma}{2} \quad \forall t \in [t_0,1].
\]
Thus for each $h \in \Sigma $,
\[
\begin{split}
 \tilde{K}_\tau(x_0,0,h) \geq & (p-2)\int_{t_0}^1 \abs{Du(x_0)\cdot h + (1-t) o_{|\tau|}(1) h}^{p-2} \geq C > 0.
 \end{split}
\]
That is \eqref{eq:lambdaangleass} is satisfied. It now easy to extend $\tilde{K}_\tau$ to a kernel that satisfies the conditions of \Cref{th:main} in all of $\Omega$, and we call this kernel $K_\tau$.
\[
 \begin{split}
&\int_{\Omega}\int_{\Omega} \frac{K_\tau\brac{x,|x-y|,\frac{x-y}{|x-y|}} (\delta_\tau u(x)-\delta_\tau u(y)) \brac{ \psi(x)- \psi(y)}}{|x-y|^{n+sp+2-p}}\, dx\, dy\\
=& \mathcal{G}_{1,\tau}(u,\psi)+\mathcal{G}_{2,\tau}(u,\psi)+ \mathcal{H}_\tau(f,\psi).
\end{split}
\]
where
\[
 \mathcal{G}_{2, \tau}(u,\psi) = \int_{\Omega}\int_{\Omega} \frac{(\tilde{K}_\tau-K_\tau)\brac{x,|x-y|,\frac{x-y}{|x-y|}} (\delta_\tau u(x)-\delta_\tau u(y)) \brac{ \psi(x)- \psi(y)}}{|x-y|^{n+sp+2-p}}\, dx\, dy
\]
Next we estimate each of the terms in the right hand side. 

\underline{Estimate of $\mathcal{G}_{1,\tau}$}
Observe that $|\Omega \lap [\Omega - \tau]| \aleq |\tau|$, and so also $|[\Omega\times \Omega] \lap [(\Omega -\tau) \times (\Omega-\tau)]| \aleq |\tau|$. Also if $x,y \in [\Omega\times \Omega] \lap [(\Omega -\tau) \times (\Omega-\tau)]$ and either $x$ or $y$ $\in \supp \psi = B(x_0,\frac{1}{2} R)$ then $|x-y| \aeq |x-y|+1$ (with constant depending on $R$). So
\[
 \mathcal{G}_{1, \tau}(u,\psi) \aleq |\tau| \|u\|_{L^\infty}^{p-1} \|\psi\|_{L^1}.
\]

\underline{Estimate of $\mathcal{G}_2$}
{
Observe that since $\psi \in C_c^\infty(B(x_0,\frac{1}{2}R))$ we have that 
\[
 x,y \in \Omega: (\tilde{K}_\tau-K_\tau)\brac{x,|x-y|,\frac{x-y}{|x-y|}} \brac{ \psi(x)- \psi(y)} \neq 0
\]
necessarily implies that $|x-y| \ageq R$. So we have 
{\tiny \[
\begin{split}
& \mathcal{G}_{2,\tau} (u,\psi)\aleq \int_{\Omega} \int_{\Omega} \brac{1+|u(x)-u(y)|^{p-2}+|u(x+\tau)-u(y+\tau)|^{p-2}} |\delta_\tau u(x)-\delta_\tau u(y)|\, |\psi(x)-\psi(y)|\, \frac{dx\, dy}{1+|x-y|^{n+sp-p+2}}
\end{split}
\]}}
Arguing again with the support of $\psi$ we should get for any $q \geq p$,
\[
 \mathcal{G}_{2,\tau}(u,\psi) \aleq \|u\|_{L^\infty(B(R))}^{p-2}\, \brac{\|\delta_\tau u\|_{L^{q}(B(R))}+\|\delta_\tau u\|_{L^{2}(\R^n)}} \|\psi\|_{L^{q'}(B(R))} 
\]

\end{proof}

\begin{proof}[Proof of Theorem~\ref{co:plap}]
Set $\alpha := \min\{sp-p+2,1\}$.
Iteratively applying \Cref{th:main} to the equation in \Cref{pr:plaprew} on balls $B(R_i)$ with $R_{i+1} < R_{i}$, for any $q_i$ such that the right-hand side is small, for arbitarily small $\eps > 0$.

\[
[\delta_h u]_{W^{\alpha-\eps,q_{i}-\eps}(B(R_{i+1}))} \aleq \brac{|\tau| + \|\delta_\tau u\|_{L^{q_i}(B(R))}+\|\delta_\tau u\|_{L^{2}(\R^n)}} 
\]
We choose $q_1 = p$ and $q_{i+1} > q_{i}$ such that $q_{i+1} < \frac{nq_i}{n-\alpha q_i}$ ($q_{i+1} = \infty$ if $n-\alpha q_i < 0$) then by Sobolev inequality (choosing $\eps$ appropriately small)
\[
\|\delta_\tau u\|_{L^{q_{i+1}}(B(R_{i+1}))} \aleq \brac{|\tau| + \|\delta_\tau u\|_{L^{q_i}(B(R_i))}+\|\delta_\tau u\|_{L^{2}(\R^n)}} 
\]
After finitely many steps $N$ we reach $q_N = \infty$. Thus we have obtained
\[
 [\delta_\tau u]_{C^{\alpha-\eps}(B(R_{N+1})} \aleq \brac{|\tau| + \|\delta_\tau u\|_{L^{p}(B(R_i))}+\|\delta_\tau u\|_{L^{2}(\R^n)}}.
\]
Dividing by $|\tau|^{s}$ we find that 
\[
 [u]_{C^{s+\alpha-\eps}(B(R_{N+1}))} < \infty.
\]
By assumptions we have $s+\alpha > 1$, so we can bootstrap regularity with the help of \Cref{th:main:above1}, and finally obtain $u \in C^{s+\min\{sp-p+2,2\}-\eps}$.
\end{proof}

\appendix
\newcommand{\be}{\begin{equation}}
\newcommand{\ee}{\end{equation}}
\newcommand{\vp }{\varphi }
\renewcommand{\k}{\kappa}
\newcommand{\cL}{{\mathcal L}}
\renewcommand{\a }{\alpha }
\newcommand{\s }{\sigma }
\newcommand{\n }{\nabla }

\section{A remark on H\"older regularity for equations with coercive kernels}\label{s:hoeldercont}
We consider symmetric kernels $K:\R^n\times \R^n\to [0,\infty] $ measurable and satisfying
\begin{equation}\label{eq:K-upb}
0 \leq K(x,y)\leq \k \qquad\textrm{ for all $x,y\in \R^n$}. 
\end{equation}
Moreover, we assume that for all $\vp \in \dot{H}^s(\R^n)$, 
\begin{equation}\label{eq:K-coerci}
\begin{split}
\frac{1}{\k} [\vp ]_{H^s(\R^n)}^2&:=\frac{1}{\k}\iint_{\R^{2n}} \frac{(\vp(x)-\vp(y))^2}{|x-y|^{n+2s}} \, dxdy\\
&\leq 
\iint_{\R^{2n}}K(x,y)\, \frac{(\vp(x)-\vp(y))^2}{|x-y|^{n+2s}}  dxdy := [\vp ]_{H^s_K(\R^n)}^2
\end{split}
\end{equation}
This includes translation anisotropic  kernels  $K$ supported  on cones as in \eqref{eq:lambdaangleass}, see \eqref{eq:low-Cone-BR}.

We recall that 
\[
\cL_K u[ \varphi] := \iint_{\R^{2n}}  K(x,y) \frac{(u(y)-u(x))(\varphi(y)-\varphi(x))}{|x-y|^{n+2s}}\, dx\, dy.
\]
We denote by $ \cL^1_s$ the space of $L^{1}_{loc}$ functions $u$ such that $x\mapsto {\frac{u(x)}{1 + |x|^{n + 2s}}}$ is $L^{1}(\mathbb{R}^{n}). $  
Our aim is to re-prove the following theorem.
\begin{theorem}\label{th:L2-to-L-infty}
Let $f\in L^\infty(\R^n)$ and $u\in H^s_{loc}(\R^n)\cap \cL^1_s$ such that $\cL_K u= f$ in $B_2$. Then, there exist $\a,C>0$ such
$$
\|u\|_{C^\a(B_1)}\leq C(\|u\|_{L^2(B_2)}+ \|u\|_{ \cL^1_s  }+\|f\|_{L^\infty(\R^n)}).
$$
\end{theorem}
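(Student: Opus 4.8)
The plan is to run the classical De Giorgi iteration, in the form adapted to nonlocal equations by Caffarelli--Chan--Vasseur and Di Castro--Kuusi--Palatucci, the key point being that the \emph{only} structural properties of $K$ used are the upper bound \eqref{eq:K-upb} and the coercivity \eqref{eq:K-coerci}. Indeed, coercivity is exactly what allows one to replace the $K$-weighted energy of a \emph{localized truncation} $(u-k)^+\eta$ by its full Gagliardo seminorm $[(u-k)^+\eta]_{H^s(\R^n)}$, which can then be fed into the fractional Sobolev inequality $\|g\|_{L^{2\chi}(\R^n)}^2\aleq [g]_{H^s(\R^n)}^2$ (with $\chi=n/(n-2s)$ when $n>2s$, and any $\chi>1$ otherwise). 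The argument splits into a local boundedness ($L^2\to L^\infty$) statement and an oscillation-decay statement, whose combination yields the claimed $C^\alpha$ bound; since \eqref{eq:low-Cone-BR} shows that cone-supported kernels as in \eqref{eq:lambdaangleass} satisfy \eqref{eq:K-coerci}, this re-proves the theorem in the generality used elsewhere in the paper.

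First I would establish a Caccioppoli inequality with tail. Testing $\cL_K u=f$ against $\varphi=(u-k)^+\eta^2$ with $\eta\in C_c^\infty(B_r)$, $\eta\equiv 1$ on $B_{r/2}$, using the elementary pointwise algebra for $(u(x)-u(y))(\varphi(x)-\varphi(y))$ together with $0\le K\le\kappa$ and \eqref{eq:K-coerci}, one obtains, writing $w:=(u-k)^+$,
\[
[w\eta]_{H^s(\R^n)}^2\;\aleq\;\frac{1}{r^{2s}}\int_{B_r}w^2\;+\;\Big(\sup_{x\in B_{r/2}}\int_{\R^n\setminus B_r}\frac{w(y)}{|x-y|^{n+2s}}\,dy\Big)\int_{B_r}w\;+\;\|f\|_{L^\infty}\int_{B_r}w .
\]
Then, with levels $k_j$ increasing to a level $M$ and radii $r_j\downarrow 1$, combining this with fractional Sobolev and H\"older in the standard iteration lemma (geometric decay of $A_j:=\int_{B_{r_j}}(u-k_j)_+^2$) gives
\[
\|u^+\|_{L^\infty(B_1)}\;\aleq\;\|u^+\|_{L^2(B_2)}+\int_{\R^n\setminus B_1}\frac{u^+(y)}{|y|^{n+2s}}\,dy+\|f\|_{L^\infty(\R^n)},
\]
and the symmetric statement for $u^-$ (since $-u$ solves the equation with the same kernel and datum $-f$). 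As $\int_{\R^n\setminus B_1}\frac{|u(y)|}{|y|^{n+2s}}\,dy\aleq\|u\|_{\cL^1_s}$, this already yields the $L^\infty$ part of the estimate with $\|u\|_{L^2(B_2)}$ on the right-hand side.

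The heart of the matter is the oscillation decay. Here I would prove the nonlocal De Giorgi isoperimetric (measure-shrinking) lemma: if, after normalization, $0\le u\le 1$ in $B_r$ with controlled tail, and $|\{u\le 1/2\}\cap B_{r/2}|\ge |B_{r/2}|/2$, then for any $\delta>0$ either $|\{u\ge 1-\delta\}\cap B_{r/4}|$ is arbitrarily small or $|\{1/2<u<1-\delta\}\cap B_{r/2}|$ is bounded below. This comes from a logarithmic energy estimate (testing against $\varphi=(u+d)^{-1}\eta^2$ to get a BMO-type bound on $\log(u+d)$), a fractional Poincar\'e inequality, and the classical De Giorgi lemma. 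Feeding this, together with the boundedness estimate applied to truncations, into an iteration over dyadic balls $B_{\sigma^j}$ produces $\osc_{B_{\sigma^{j+1}}}u\le(1-\theta)\osc_{B_{\sigma^j}}u+C\sigma^{2sj}\big(\|u\|_{L^\infty(B_1)}+\|u\|_{\cL^1_s}+\|f\|_{L^\infty}\big)$, where the tail at scale $\sigma^j$ is split into an inner part (absorbed into the already-achieved geometric decay of the oscillations of $u$ on larger balls, summable thanks to an extra geometric factor from the kernel) and an outer part (controlled by $\sigma^{2sj}\|u\|_{\cL^1_s}$). Summing this recursion gives $\osc_{B_\rho}u\aleq\rho^\alpha\big(\|u\|_{L^2(B_2)}+\|u\|_{\cL^1_s}+\|f\|_{L^\infty}\big)$ for small $\rho$, i.e.\ the asserted bound.

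I expect the main obstacle to be precisely the nonlocal De Giorgi lemma together with the tail bookkeeping in the oscillation iteration: one must track carefully how the tail term at each dyadic scale is distributed between the already-controlled oscillations of $u$ on the coarser balls and the global $\cL^1_s$-tail (with the small prefactor $\sigma^{2sj}$). Everything else is routine: the presence of $f\in L^\infty$ contributes only lower-order terms, and the borderline cases $n\le 2s$ are handled by replacing the fractional Sobolev exponent with an arbitrarily large one. Throughout, \eqref{eq:K-coerci} is invoked exactly at the single point where the $K$-energy of $(u-k)^+\eta$ is turned into $[(u-k)^+\eta]_{H^s(\R^n)}$, so no pointwise lower bound on $K$ is ever needed.
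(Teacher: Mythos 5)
Your proposal is correct and follows essentially the same route as the paper: a Caccioppoli inequality for truncations plus the fractional Sobolev inequality (both using only the upper bound \eqref{eq:K-upb} and the coercivity \eqref{eq:K-coerci}) gives local $L^2\to L^\infty$ boundedness via De~Giorgi iteration, a logarithmic lemma obtained by testing against $\varphi^2/(u+d)$ feeds the measure-shrinking step, and the oscillation-decay iteration \`a la Di~Castro--Kuusi--Palatucci closes the argument. The key structural observation you emphasize---that coercivity is invoked only to convert the $K$-weighted energy of a localized truncation into a Gagliardo seminorm, so no pointwise lower bound on $K$ is needed---is exactly the point made in the appendix.
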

To prove this result, we need some classical   fundamental results: Caccioppoli inequality, a nonlinear Sobolev inequality and a Logarithmic lemma.
\begin{lemma}[Caccioppoli inequality]\label{lem-cacc}

Let  $v\in H^s (B(R))\cap \cL^1_s $ be nonnegative and $f\in L^\infty(B(R))$ satisfy  
\be\label{eq:Dsv-eq-f-Liou}
\cL_K  v\leq  f \qquad\textrm{ in $   \  B(R).$}
\ee
 
 Then there exists $c=c(n,s,\k)>0$ such that    for every $\vp \in C^\infty_c( B(R))$, we have

\begin{align*}
&\frac12  [\vp v]_{H^s(\R^n)}^2 \leq c \|\n \vp\|_{L^\infty(B(R))}^2 R^{2-2s}   \|v\|^2_{L^2(B(R))} \\
 &+ c\|\vp\|_{L^\infty(B(R))} \left[\int_{\R^n\setminus B(R)}v(y)  |x-y|^{-2s-n}\, dy+ \|f\|_{L^\infty(B(R))}\right] \|v \vp \|_{L^1(B(R))} .
\end{align*}
\end{lemma}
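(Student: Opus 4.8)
The plan is to test the inequality $\mathcal{L}_K v \le f$ on $B(R)$ with the function $\psi := \varphi^2 v$ and to exploit an elementary algebraic identity. First I would justify that $\psi$ is an admissible test function: since $v \in H^s(B(R)) \cap \mathcal{L}^1_s$ is nonnegative and $\varphi \in C_c^\infty(B(R))$, the product $\psi = \varphi^2 v$ is nonnegative, belongs to $H^s(\R^n)$, and is compactly supported inside $B(R)$. The functional $\phi \mapsto \mathcal{L}_K v[\phi]$ extends continuously from $C_c^\infty(B(R))$ to all such $\phi$: one splits $\R^{2n}$ into $B(2R)^2$, where Cauchy--Schwarz and $v \in H^s(B(2R))$ apply, and its complement, where $\phi$ vanishes outside $B(R)$ and $v \in L^1_{loc}\cap\mathcal{L}^1_s$ controls the resulting far-field integrals. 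Mollifying $\psi$ (which keeps it nonnegative and supported in $B(R)$) and passing to the limit yields
\[
 \mathcal{L}_K v[\varphi^2 v] \;\le\; \int_{B(R)} f\,\varphi^2 v .
\]

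Next I would use the pointwise identity, valid for all $x,y$ (with $\varphi$ extended by zero),
\[
(v(y)-v(x))\bigl(\varphi^2(y)v(y)-\varphi^2(x)v(x)\bigr)=\bigl(\varphi(x)v(x)-\varphi(y)v(y)\bigr)^{2}-\bigl(\varphi(x)-\varphi(y)\bigr)^{2}v(x)v(y),
\]
which follows by expanding both sides. Writing $w:=\varphi v$ and inserting this into $\mathcal{L}_K v[\varphi^2 v]$ gives
\[
[w]_{H^s_K(\R^n)}^{2}=\mathcal{L}_K v[\varphi^2 v]+\iint_{\R^{2n}}K(x,y)\,\frac{(\varphi(x)-\varphi(y))^{2}\,v(x)v(y)}{|x-y|^{n+2s}}\,dx\,dy\;\le\;\int_{B(R)}f\,\varphi^2 v+(\mathrm{Cut}),
\]
where $(\mathrm{Cut})$ denotes that displayed double integral. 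By the coercivity assumption \eqref{eq:K-coerci} one has $[w]_{H^s(\R^n)}^{2}\le\kappa\,[w]_{H^s_K(\R^n)}^{2}$, so it remains to bound the two terms on the right.

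The first term is at once $\le\|f\|_{L^\infty(B(R))}\|\varphi\|_{L^\infty(B(R))}\|\varphi v\|_{L^1(B(R))}$. For $(\mathrm{Cut})$ the integrand vanishes unless $x$ or $y$ lies in $\operatorname{supp}\varphi\subset B(R)$, so I split into the part with $x,y\in B(R)$ and (twice, by symmetry) the part with $x\in B(R)$, $y\notin B(R)$. In the first part, $|\varphi(x)-\varphi(y)|\le\|\nabla\varphi\|_{L^\infty}|x-y|$, $v(x)v(y)\le\tfrac12(v(x)^2+v(y)^2)$, $K\le\kappa$, and $\int_{B(R)}|x-y|^{2-n-2s}\,dy\le c(n,s)R^{2-2s}$ give a bound $c(n,s)\kappa\|\nabla\varphi\|_{L^\infty}^{2}R^{2-2s}\|v\|_{L^2(B(R))}^{2}$. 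In the second part $\varphi(y)=0$, hence $(\varphi(x)-\varphi(y))^2=\varphi^2(x)$, and the contribution is $\le 2\kappa\|\varphi\|_{L^\infty}\bigl(\sup_{x\in\operatorname{supp}\varphi}\int_{\R^n\setminus B(R)}v(y)|x-y|^{-n-2s}\,dy\bigr)\|\varphi v\|_{L^1(B(R))}$, the supremum being finite because $v\in L^1_{loc}\cap\mathcal{L}^1_s$. Collecting these bounds and absorbing $\kappa$ into a constant $c=c(n,s,\kappa)$ yields the asserted inequality (the factor $\tfrac12$ on the left being satisfied a fortiori).

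The only genuinely delicate point is the admissibility of $\varphi^2 v$ as a test function together with the absolute convergence of the far-field (tail) integrals — precisely where the hypotheses $v\in H^s_{loc}\cap\mathcal{L}^1_s$ and the tail term in the statement are used. The algebraic identity is the computational heart of the argument but is elementary, and the remaining estimates are routine.
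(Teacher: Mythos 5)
Your proposal is correct and follows essentially the same route as the paper: you test the differential inequality with $\varphi^2 v$, invoke the algebraic identity $(v(x)-v(y))(\varphi^2(x)v(x)-\varphi^2(y)v(y))=(\varphi(x)v(x)-\varphi(y)v(y))^2-v(x)v(y)(\varphi(x)-\varphi(y))^2$, use the coercivity \eqref{eq:K-coerci} to pass from $[\cdot]_{H^s_K}$ to $[\cdot]_{H^s}$, and split the ``cut'' term into the $B(R)\times B(R)$ block (handled by $K\le\kappa$, Young, and $\int_{B(R)}|x-y|^{2-2s-n}\,dy\lesssim R^{2-2s}$) and the cross term (where $\varphi(y)=0$ yields the tail integral against $v$). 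The only addition beyond the paper's argument is your explicit justification that $\varphi^2 v$ is an admissible test function via mollification, which the paper takes for granted; your remark that the derivation actually yields $[\varphi v]_{H^s}^2$ on the left, so the stated $\tfrac12[\varphi v]_{H^s}^2$ follows a fortiori, is also consistent.
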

\begin{proof}
By direct computations, we get 
\begin{align*}
(v(x)-v(y))[v(x)\vp^2(x)-v(y)\vp^2(y)]&=  (\vp(x)v(x)-\vp(y)v(y))^2 -v(x)v(y)(\vp(x)-\vp(y))^2.
\end{align*}
Testing the equation \eqref{eq:Dsv-eq-f-Liou} with $v\vp^2$,   and using the identity above together with the symmetry of $K$,  we  get
\begin{align}\label{eq:before-cacciop}
\frac12 [\vp v]_{H^s_K(\R^n)}^2 &\leq \frac12 \iint_{\R^{2n}}v(x)v(y)(\vp(x)-\vp(y))^2 {\frac{K(x,y)}{|x-y|^{n+2s}}}\, dydx  +\int_{\R^n} f (  x) v(x)  \vp^2(x)\, dx. 
\end{align}
By   H\"older and Young's inequalities, we get 
\begin{align}
\int_{\R^n}| f (  x) |v(x)  \vp^2&(x)\, dx \leq    \|f\|_{L^\infty(B(R))} \|\vp \|_{L^\infty(B(R))} \|v\vp\|_{L^1(\R^n)}  .
\end{align} 
 Next, we write
 \begin{align}
 &\iint_{\R^{2n}}v(x)v(y)(\vp(x)-\vp(y))^2 {\frac{K(x,y)}{|x-y|^{n+2s}}}\, dydx\\
 & = \iint_{B(R)\times B(R)}v(x)v(y)(\vp(x)-\vp(y))^2 {\frac{K(x,y)}{|x-y|^{n+2s}}}\, dydx \nonumber \\
& + 2   \iint_{B(R)\times \R^n\setminus B(R)}v(x)v(y)(\vp(x)-\vp(y))^2 {\frac{K(x,y)}{|x-y|^{n+2s}}}\, dydx  .  \label{eq:2-Caccipp}
 \end{align}
 We put $\mu_1(x,y)=|x-y|^{-2s-n}$.
By \eqref{eq:K-upb},  H\"older and Young's inequalities, we get 
\begin{align}
 &\iint_{B(R)\times B(R)}v(x)v(y)(\vp(x)-\vp(y))^2 {\frac{K(x,y)}{|x-y|^{n+2s}}}\, dydx\\
 &\leq 4  \k \int_{B(R)}v^2(y)\int_{B(R)}(\vp(x)-\vp(y))^2 \mu_1(x,y)\, dydx \nonumber\\
 &\leq 4  \|\n \vp\|_{L^\infty(B(R))}^2\, \, \k \int_{B(R)}|v|^2(y)\int_{B(R)} |x-y|^2 \mu_1(x,y)\, dydx \nonumber\\
&\leq c \|\n \vp\|_{L^\infty(B(R))}^2R^{2-2s}   \int_{B(R)}v^2(x)\, dx.  \label{eq:3-Caccipp} 
%
\end{align}
{Here we used that $\int_{B(R)} |x-y|^{2-2s-n} dy \aeq R^{2-2s}$ whenever $x \in B(R)$}.

Moreover, we estimate
\begin{align}
  \iint_{B(R)\times \R^n\setminus B(R)}&v(x)v(y)(\vp(x)-\vp(y))^2 K(x,y)\, dydx \nonumber\\
   & \leq   \k \iint_{B(R)\times \R^n\setminus B(R)}v(x)v(y)\vp(x)^2 \mu_1(x,y)\, dydx \nonumber \\
  & \leq   2 \k\int_{B(R)}v(x)|\vp(x)| \, dx\, \sup_{x\in B(R)}|\vp(x)|\int_{\R^n\setminus B _R}v(y)  \mu_1(x,y)\, dy.  \label{eq:4-Caccipp}
\end{align}
 Using \eqref{eq:K-coerci},  \eqref{eq:4-Caccipp}, \eqref{eq:3-Caccipp} and \eqref{eq:2-Caccipp} we get the result.
\end{proof}
An immediate consequence of the above Caccioppoli inequality and  \eqref{eq:K-coerci}, which implies the (nonlinear) Sobolev inequality, we can carry over the De Giorgi iteration, \cite{K07},  to  get the following local $L^{\infty}$ estimate. 
\begin{theorem}\label{lem:L-infty-bnd-u-ok}
Let  $u\in H^s(B_{2r})\cap\cL^1_s$ and $f\in L^\infty(\R^n)$    satisfy
$$
\cL_K  u=f \qquad\text{ in $B_{2r}$.} 
$$
Then there exists  $C=C(n,s,\k)>0$ such that   , 
$$
\sup_{B_r}|u| \leq C  \left(  r^{-n/2}\| u\|_{L^{2} (B_{2r})}+ r^{2s} \int_{|y|\geq r/2}|u(y)||y|^{-n-2s}\, dy+r^{2s}\|f\|_{L^\infty(B_{2r})} \right).
$$
\end{theorem}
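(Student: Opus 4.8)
The plan is to carry out a De Giorgi level-set iteration adapted to the nonlocal operator, as in \cite{K07}, using \Cref{lem-cacc} as the energy estimate and the coercivity \eqref{eq:K-coerci} --- which supplies the (nonlinear) fractional Sobolev inequality --- as the device that upgrades integrability. First I would eliminate the scale: setting $\tilde u(x):=u(rx)$, one has $\cL_{\tilde K}\tilde u=\tilde f$ in $B_2$ with $\tilde K(x,y):=K(rx,ry)$ still obeying \eqref{eq:K-upb} and \eqref{eq:K-coerci} with the same $\k$, and $\tilde f(x):=r^{2s}f(rx)$, and the three terms on the right-hand side of the claimed bound become exactly $\|\tilde u\|_{L^2(B_2)}$, $\int_{|y|\ge 1/2}|\tilde u(y)||y|^{-n-2s}\,dy$ and $\|\tilde f\|_{L^\infty(B_2)}$. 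Hence it suffices to treat $r=1$; and, since $\cL_K$ is linear in $u$, by homogeneity it is enough to show that there is $\delta_0=\delta_0(n,s,\k)>0$ such that whenever $\|u\|_{L^2(B_2)}+\int_{|y|\ge 1/2}|u(y)||y|^{-n-2s}\,dy+\|f\|_{L^\infty(B_2)}\le\delta_0$, then $\sup_{B_1}u\le 1$. Applying this also to $-u$, which solves $\cL_K(-u)=-f$ in $B_2$, then produces the two-sided $L^\infty$ bound.

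For $k\ge 0$, fix the levels $\ell_k:=1-2^{-k}$, radii $r_k:=1+2^{-k}$, truncations $w_k:=(u-\ell_k)_+$, and cutoffs $\vp_k\in C_c^\infty(B_{r_k})$ with $\vp_k\equiv 1$ on $B_{r_{k+1}}$, $0\le\vp_k\le 1$, $\|\nabla\vp_k\|_{L^\infty}\aleq 2^k$ and $\dist(\supp\vp_k,\partial B_{r_k})\ageq 2^{-k}$. A short case analysis gives the pointwise inequality
\[
(u(x)-u(y))\,\brac{w_k(x)\vp_k^2(x)-w_k(y)\vp_k^2(y)}\ \ge\ (w_k(x)-w_k(y))\,\brac{w_k(x)\vp_k^2(x)-w_k(y)\vp_k^2(y)};
\]
integrating it against $K(x,y)|x-y|^{-n-2s}$ and testing $\cL_K u=f$ with $w_k\vp_k^2$ (admissible, since $w_k\vp_k^2$ lies in $H^s(\R^n)$ with support in $B_2$) gives $\cL_K w_k[w_k\vp_k^2]\le\int_{\R^n} f\,w_k\vp_k^2\,dx$, so that the nonnegative $w_k\in H^s(B_{r_k})\cap\cL^1_s$ satisfies the hypothesis $\cL_K w_k\le f$ of \Cref{lem-cacc} on $B_{r_k}$. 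Applying \Cref{lem-cacc} to $v=w_k$, $\vp=\vp_k$, then using \eqref{eq:K-coerci} and the Sobolev inequality $\|\vp_kw_k\|_{L^{2^\ast_s}(\R^n)}^2\aleq[\vp_kw_k]_{H^s(\R^n)}^2$ (with $2^\ast_s:=\tfrac{2n}{n-2s}$ when $2s<n$, and any fixed exponent $>2$ when $2s\ge n$), followed by H\"older's inequality on the superlevel set $\{w_k>0\}\cap B_{r_k}$ together with $|\{w_{k+1}>0\}\cap B_{r_{k+1}}|\le(\ell_{k+1}-\ell_k)^{-2}\!\int_{B_{r_k}}w_k^2$, yields a recursion
\[
A_{k+1}\ \le\ C\,b^{k}\,A_k^{1+\theta},\qquad A_k:=\int_{B_{r_k}}w_k^2,
\]
with $C,b>1$ and $\theta=\theta(n,s)>0$ (one may take $\theta=2s/n$ when $2s<n$).

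It then remains to apply the standard fast-convergence lemma: if $A_0$ lies below a threshold depending only on $n,s,\k$, then $A_k\to 0$, so $(u-1)_+=0$ a.e.\ in $B_1$, i.e.\ $\sup_{B_1}u\le 1$. Since $A_0\le\|u\|_{L^2(B_2)}^2\le\delta_0^2$, the smallness hypothesis secures this for a suitable $\delta_0$. Undoing the normalization and the scaling gives the asserted inequality, with $r^{-n/2}$ on the $L^2$ term and $r^{2s}$ on the tail and on $\|f\|_{L^\infty}$.

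The step needing genuine care --- rather than routine bookkeeping --- is the nonlocal tail in \Cref{lem-cacc} applied to $w_k$, namely $\int_{\R^n\setminus B_{r_k}}w_k(y)|x-y|^{-n-2s}\,dy$ for $x\in\supp\vp_k$. One splits it into the far field $\R^n\setminus B_2$, on which $|x-y|\aeq|y|$ so the contribution is $\aleq\int_{|y|\ge 1/2}|u(y)||y|^{-n-2s}\,dy\aleq\delta_0$; and the near annulus $B_2\setminus B_{r_k}$, on which one only has $|x-y|\ageq 2^{-k}$ and hence a factor growing exponentially in $k$. This annular piece must be reabsorbed into the geometric decay; the standard device is to carry along, alongside $A_k$, the superlevel measures $|\{w_k>0\}\cap B_{r_k}|$ (using $w_k\le w_{k-1}$ on the annulus), which contract at a geometric rate, so the bad factor is defeated by a positive power of $A_k$, exactly as for the Sobolev gain. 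A secondary, minor point is the borderline range $2s\ge n$, where $H^s\hookrightarrow L^{2^\ast_s}$ degenerates and one instead uses $H^s(B_2)\hookrightarrow L^q(B_2)$ for an arbitrary finite $q>2$, still giving $\theta>0$.
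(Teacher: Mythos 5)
Your proposal is correct and follows the same route the paper takes: the paper's proof of this theorem consists precisely of the remark that the Caccioppoli inequality of \Cref{lem-cacc} together with the Sobolev inequality coming from \eqref{eq:K-coerci} allow one to carry out the De Giorgi level-set iteration (citing \cite{K07}). Your write-up is simply a fully detailed version of that sketch --- the rescaling to $r=1$, the normalization, the levels/radii/cutoffs, the recursion $A_{k+1}\le Cb^kA_k^{1+\theta}$, and the treatment of the nonlocal tail are all the standard ingredients the paper is implicitly invoking.
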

To obtain H\"older estimates, we need the following result.
\begin{lemma}[Logarithmic lemma]\label{lem:log-lem}
Let  $v\in H^s (B(R))\cap \cL^1_s $  and $f\in L^\infty(B(R))$ such that \eqref{eq:Dsv-eq-f-Liou} holds
 and $u\geq 0$ in $B(R)$. 
 Then there exists $c=c(n,s,\k)>0$ such that    for every $r\in (0,R/2), d>0$, we have
\begin{align*}
&\int_{B_r\times B_r}K(x,y)\left| \log \frac{u(x)+d}{u(y)+d} \right|^2\, dxdy\\
&\leq c  r^{n-2s} \left( \frac{r^{2s}}{R^{2s}} d^{-1} \int_{|y|\geq R/2}|v_-(y)||y|^{-n-2s}\, dy +r^{2s}  d^{-1} \|f\|_{L^\infty(B_{r})} + 1 \right).
\end{align*}
\end{lemma}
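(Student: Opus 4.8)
The plan is to run the classical scheme behind logarithmic estimates for non-local equations (the one that feeds the De~Giorgi iteration, as in \cite{K07}): test the differential inequality against a nonlinear test function built from $v$ and isolate the logarithmic term by a pointwise algebraic inequality. The sign that makes this work is the super-solution one, so I read \eqref{eq:Dsv-eq-f-Liou} in that direction; this is the form in which the lemma actually gets used, since in the iteration $v$ is of the type $\sup_{B_2}u-u$ and $\cL_K v$ differs from a bounded function only through its sign. Throughout I set $w:=v+d$, so $w\ge d>0$ on $B(R)$ because $v\ge 0$ there, and I fix a cut-off $\varphi\in C_c^\infty(B_{3r/2})$ with $0\le\varphi\le1$, $\varphi\equiv1$ on $B_r$, $\|\nabla\varphi\|_{L^\infty}\aleq r^{-1}$ (we may assume $r$ small enough that $B_{3r/2}\subset B_{R/2}$; the remaining range of $r$ is a routine covering).

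The test function is $\psi:=\varphi^2w^{-1}\ge0$. Since $v\in H^s(B(R))$ and $w\ge d$, $\psi\in H^s$ has compact support in $B(R)$, so after the usual density approximation it is admissible in \eqref{eq:Dsv-eq-f-Liou}, which then gives $\cL_K v[\psi]\ageq\int_{B(R)}f\psi$. The right-hand side is harmless: $|\int f\psi|\le\|f\|_{L^\infty(B_{2r})}\,d^{-1}\|\varphi^2\|_{L^1}\aleq \|f\|_{L^\infty(B_{2r})}\,d^{-1}r^{n}$, which is of the announced form $c\,r^{n-2s}\,r^{2s}d^{-1}\|f\|_{L^\infty}$. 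For the left-hand side I split $\R^{2n}$ into $B_{R/2}\times B_{R/2}$, its two symmetric mixed blocks $B_{R/2}\times(\R^n\setminus B_{R/2})$, and the set where both points avoid $B_{R/2}$, on which $\psi\equiv0$. On $B_{R/2}\times B_{R/2}$, where $w(x),w(y)>0$, I invoke the elementary inequality
\[
(a-b)\left(\frac{\sigma^2}{a}-\frac{\tau^2}{b}\right)\le c_2(\sigma-\tau)^2-c_1\,\sigma\tau\left(\log\frac ab\right)^2,\qquad a,b>0,\ \sigma,\tau\ge0,
\]
with $a=w(x),\ b=w(y),\ \sigma=\varphi(x),\ \tau=\varphi(y)$ (recall $v(x)-v(y)=w(x)-w(y)$ and $\psi=\varphi^2w^{-1}$). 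Since $K\ge0$ and $\varphi\equiv1$ on $B_r$, the interior part is bounded above by $c_2\iint_{B_{R/2}^2}\frac{K(\varphi(x)-\varphi(y))^2}{|x-y|^{n+2s}}-c_1\int_{B_r^2}\frac{K}{|x-y|^{n+2s}}\left|\log\frac{w(x)}{w(y)}\right|^2$, and by \eqref{eq:K-upb} the first summand is $\le\k\,[\varphi]_{H^s(\R^n)}^2\aleq\k\,r^{n-2s}$. A point worth stressing: the algebraic inequality is applied on the whole ball $B_{R/2}$, not merely on $\supp\varphi$, so that the near-diagonal singularity of $|x-y|^{-n-2s}$ gets absorbed by $(\varphi(x)-\varphi(y))^2\aleq r^{-2}|x-y|^2$, while in the two mixed blocks the points are separated, $|x-y|\ageq R+|y|$.

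On each mixed block $\psi(y)=0$, so the integrand is $K(x,y)\frac{(v(x)-v(y))\psi(x)}{|x-y|^{n+2s}}$; writing $v(x)-v(y)\le v(x)+v_-(y)$ (the discarded $-v_+(y)\psi(x)$ has the favourable sign) and using $v(x)\psi(x)\le\varphi^2(x)\le1$, $\psi(x)\le d^{-1}\varphi^2(x)$, $\supp\varphi\subset B_{3r/2}$ and $|x-y|\ageq R+|y|$, these blocks are $\aleq\k\,r^{n-2s}$ and $\aleq\k\,d^{-1}r^{n}\int_{|y|\ge R/2}|v_-(y)|\,|y|^{-n-2s}\,dy$, i.e.\ again of the type appearing on the right of the claim. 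Assembling,
\[
\cL_K v[\psi]=(\text{interior})+2\,(\text{mixed})\le \k[\varphi]_{H^s(\R^n)}^2-c_1\int_{B_r^2}\frac{K}{|x-y|^{n+2s}}\left|\log\frac{w(x)}{w(y)}\right|^2+2\,(\text{mixed}),
\]
and combining with $\cL_K v[\psi]\ageq\int f\psi\ge-\|f\|_{L^\infty}\|\psi\|_{L^1}$ and absorbing the logarithmic term to the left gives
\[
c_1\int_{B_r\times B_r}\frac{K(x,y)}{|x-y|^{n+2s}}\left|\log\frac{v(x)+d}{v(y)+d}\right|^2dx\,dy\le \k[\varphi]_{H^s(\R^n)}^2+2\,(\text{mixed})+\|f\|_{L^\infty}\|\psi\|_{L^1},
\]
which, after inserting the three bounds above, is the asserted estimate (modulo the precise normalization of the $r/R$ factors, which is absorbed into the constant).

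The main obstacle is not conceptual: it is the careful bookkeeping of the mixed/tail terms against the $\cL^1_s$-control of $v_-$, together with getting the signs right — the statement is genuinely a super-solution estimate, and the gain comes entirely from the negative term $-c_1\,\sigma\tau(\log\frac ab)^2$ in the algebraic inequality, which is itself elementary to verify.
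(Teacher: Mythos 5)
Your proof is correct, and it is precisely the argument the paper itself defers to: the paper gives no independent proof of Lemma~\ref{lem:log-lem} beyond pointing to \cite{CKPpmin}[Lemma~3.1] and remarking that the $f$-term is handled because the test function is $\varphi^2/(u+d)$. Your writeup reconstructs exactly that scheme — test with $\varphi^2 w^{-1}$, apply the pointwise inequality
\[
(a-b)\left(\frac{\sigma^2}{a}-\frac{\tau^2}{b}\right)\le c_2(\sigma-\tau)^2-c_1\,\sigma\tau\Big(\log\tfrac{a}{b}\Big)^2,
\]
split the double integral into near and far regions, bound the far (tail) contribution via $v_-$ in $\cL^1_s$ using that $v\ge 0$ on $B(R)$, and absorb the negative logarithmic term. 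Two details you handle more carefully than the terse statement in the paper suggests: (i) the inequality \eqref{eq:Dsv-eq-f-Liou} as displayed is the \emph{subsolution} sign $\cL_K v\le f$, whereas the logarithmic estimate genuinely requires the supersolution reading $\cL_K v[\psi]\ge \int f\psi$ for $\psi\ge0$; you are right that this is the sign under which the argument closes, and the paper evidently intends it so. (ii) The integrand on the left of the lemma as printed is missing the factor $|x-y|^{-(n+2s)}$, which your conclusion correctly restores. The residual discrepancy in the exact $r/R$ normalization of the tail term (your derivation yields $r^{n}\,r^{2s}d^{-1}$ rather than the paper's $r^{n}R^{-2s}d^{-1}$) is the version produced by the standard computation and does not affect the way the lemma is subsequently used.
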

\begin{proof}
The proof uses precisely the argument in \cite{CKPpmin}[proof of Lemma 3.1], where only the upper bound of $K$ was used. Note that in \cite{CKPpmin} a right hand side $f$ was not considered, however since the proof uses $\frac{\vp^2}{u+d}$, with $\vp \in C^\infty_c(B_{2r})$,  as a test function, we can easily obtain the estimate of the term involving $f$.
\end{proof}
Note that in view of  \eqref{eq:K-coerci}, we have the following Poincar\'e  inequality,
\be 
\begin{split}
& c \int_{B_r} \left|u  -(u)_{B_r} \right|^2\, dx\\
&\leq   r^{2s-n}\int_{B_r\times B_r}(u(x)-u(y))^2K(x,y)\, dxdy \qquad\textrm{ for all $u\in H^s_{loc}(\R^n)$}.
\end{split}
\ee
In  view of this and Lemma \ref{lem:log-lem}, we have 
\begin{corollary}\label{cor:cor-log}
Under the assumptions of Lemma \ref{lem:log-lem}, we have 
\be
 c \int_{B_r} \left|w  -(w)_{B_r} \right|^2\, dx\leq   \left( \frac{r^{2s}}{R^{2s}} d^{-1} \int_{|y|\geq R/2}|v_-(y)||y|^{-n-2s}\, dy +r^{2s}d^{-1} \|f\|_{L^\infty(B_{r})} +1 \right),
\ee
where  $w=\min (( \log(a+d)-\log(u+d))_+,\log b) $ for all $a,d>0$ and $b>1$.
\end{corollary}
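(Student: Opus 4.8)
The plan is to deduce this directly from the Poincar\'e-type inequality displayed immediately before the statement, combined with the Logarithmic lemma, Lemma~\ref{lem:log-lem}. The decisive observation is purely structural: the function $w = \min\!\big((\log(a+d)-\log(u+d))_+,\log b\big)$ can be written as $w = \Phi(\log(u+d))$, where $\Phi(t) := \min\!\big((\log(a+d)-t)_+,\log b\big)$ is a $1$-Lipschitz function of one real variable — being the composition of the affine shift $t\mapsto \log(a+d)-t$, the positive part, and the truncation from above at $\log b$, each of which is $1$-Lipschitz — and this holds for every choice of $a,d>0$ and $b>1$. In particular $0\le w\le \log b$, and since $|w(x)-w(y)|\le \tfrac1d|u(x)-u(y)|$ while $u\in H^s_{loc}(\R^n)$, we also have $w\in H^s_{loc}(\R^n)$, so the Poincar\'e inequality is legitimately applicable to $w$.

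First I would record the pointwise estimate
\[
 |w(x)-w(y)| = \big|\Phi(\log(u(x)+d))-\Phi(\log(u(y)+d))\big|\ \le\ \Big|\log\tfrac{u(x)+d}{u(y)+d}\Big|,\qquad x,y\in\R^n,
\]
which after squaring, multiplying by $K(x,y)$ and integrating over $B_r\times B_r$ yields
\[
 \int_{B_r\times B_r}(w(x)-w(y))^2 K(x,y)\,dx\,dy \ \le\ \int_{B_r\times B_r} K(x,y)\Big|\log\tfrac{u(x)+d}{u(y)+d}\Big|^2\,dx\,dy.
\]
Applying the Poincar\'e inequality to $w$ and then Lemma~\ref{lem:log-lem} to the right-hand side above gives
\[
 c\int_{B_r}|w-(w)_{B_r}|^2\,dx \ \le\ r^{2s-n}\int_{B_r\times B_r}(w(x)-w(y))^2K(x,y)\,dx\,dy\ \le\ c\,r^{2s-n}\,r^{n-2s}\big(\cdots\big),
\]
where the last factor is exactly the bracketed quantity appearing in the conclusion of Lemma~\ref{lem:log-lem}. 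Since $r^{2s-n}r^{n-2s}=1$, after relabelling the constant $c$ this is precisely the asserted inequality.

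There is essentially no genuine obstacle here: the argument is a short combination of two already-established facts, and the "hard part" is only the mild bookkeeping of verifying the $1$-Lipschitz property of $\Phi$, confirming $w\in H^s_{loc}$ so that Poincar\'e applies, and checking that the hypotheses required by Lemma~\ref{lem:log-lem} — namely $u\ge 0$ in $B(R)$, $\mathcal{L}_K v\le f$ in $B(R)$, and $r\in(0,R/2)$ — are exactly those assumed in the corollary. One should also note that the parameters $a,d>0$ and $b>1$ play no role beyond defining $w$, since the Lipschitz bound above is uniform in them and Lemma~\ref{lem:log-lem} holds for every $d>0$.
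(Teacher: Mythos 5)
Your argument is correct and is exactly the reasoning the paper has in mind: the paper introduces the Poincar\'e inequality and then states the corollary follows ``in view of this and Lemma~\ref{lem:log-lem},'' which is precisely your chain of applying Poincar\'e to $w$, the $1$-Lipschitz bound $|w(x)-w(y)|\le |\log\tfrac{u(x)+d}{u(y)+d}|$, and then the logarithmic lemma, with the $r^{2s-n}\cdot r^{n-2s}=1$ cancellation. Your explicit verification of the Lipschitz property of $\Phi$ and the membership $w\in H^s_{loc}$ simply spells out details the paper leaves implicit.
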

 
 The H\"older continuity is a consequence of Theorem \ref{th:L2-to-L-infty},   Corollary \ref{cor:cor-log}, Lemma \ref{lem-cacc} and the following growth estimates of the oscillation  of $u$.  
 \begin{lemma}
 Let $f\in L^\infty(\R^n)$ and $u\in H^s_{loc}(\R^n)\cap \cL^1_s$ such that $\cL_K u= f$ in $B_2$.  Then, there exists $\s,\a\in (0,1)$ depending only on $n,s,\k$ such that, for $i\in  \N$,   
 $$
 \sup_{B_{\s^{i}}}u- \inf_{B_{\s^{i}}}u\leq  \s^{\a i}\left(  \int_{|y|\geq 1/2}|u(y)||y|^{-n-2s}\, dy+ \|f\|_{L^\infty(B_{2})} +  \|u\|_{L^2(B_{2})}   \right ).
 $$
 \end{lemma}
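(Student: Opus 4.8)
\emph{Strategy and normalisation.} The plan is to run the standard nonlocal De Giorgi--Nash--Moser oscillation--reduction scheme (in the style of \cite{CastroKuusiPalatucciJFA14,CKPpmin,cozzi}), feeding in the four ingredients already established above: the interior $L^\infty$ bound (Theorem~\ref{lem:L-infty-bnd-u-ok}), the Caccioppoli inequality (Lemma~\ref{lem-cacc}), the logarithmic estimate (Lemma~\ref{lem:log-lem}, equivalently Corollary~\ref{cor:cor-log}), and the coercivity \eqref{eq:K-coerci}, which supplies the (nonlinear) fractional Sobolev inequality needed to close the De Giorgi truncation argument --- coercivity being exactly what lets the cone--supported kernel behave, for these purposes, like a pointwise elliptic one. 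First I would normalise: writing
\[
 M:=\int_{|y|\geq 1/2}|u(y)||y|^{-n-2s}\,dy+\|f\|_{L^\infty(B_2)}+\|u\|_{L^2(B_2)},
\]
I divide $u$ and $f$ by a large constant $c=c(n,s,\kappa)$, so that by Theorem~\ref{lem:L-infty-bnd-u-ok} I may assume $\osc_{B_1}u\le 1$ while $\|f\|_{L^\infty(B_2)}$ and the tail integral above are as small as convenient. The statement then reduces to producing $\sigma\in(0,1/2)$ and $\alpha\in(0,1)$, depending only on $n,s,\kappa$, with $\osc_{B_{\sigma^i}}u\le C\sigma^{\alpha i}$.

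\emph{Inductive frame.} I would construct numbers $m_i\le M_i$ with $M_i-m_i\le\omega_i:=\sigma^{\alpha i}$ and $m_i\le u\le M_i$ a.e.\ in $B_{\sigma^i}$, carrying along the extra inductive bound on the \emph{tail at scale $i$},
\[
 T_i:=\sigma^{2si}\int_{|y|\ge \sigma^i/2}\frac{(u(y)-M_i)_+ + (m_i-u(y))_+}{|y|^{n+2s}}\,dy\ \le\ \omega_i .
\]
This bookkeeping is the unavoidable nonlocal twist: when passing from $T_i$ to $T_{i+1}$, the annuli at coarser scales contribute $\sum_{j\le i}\omega_j$ against a scaling weight, a geometric series dominated by $\omega_{i+1}$ precisely when $\alpha<2s$; since $\sigma$ and $\theta$ emerge from the one--step argument below as fixed constants and $\alpha$ is then $\log(1-\theta/2)/\log\sigma$, I may always shrink $\alpha$ (recording the decay only every few steps) so that $\alpha<2s$.

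\emph{One--step gain.} Given $m_i,M_i,T_i$, after possibly replacing $u$ by $-u$ I may assume $|\{x\in B_{\sigma^i/2}:u(x)\le\tfrac12(M_i+m_i)\}|\ge\tfrac12|B_{\sigma^i/2}|$. Put $v:=M_i-u\ge0$ in $B_{\sigma^i}$; since $\cL_K v=-f$ it satisfies $\cL_K v\le\|f\|_{L^\infty}$ there, so I apply Corollary~\ref{cor:cor-log} with $R=\sigma^i$, $r$ a fixed fraction of $\sigma^i$, $a=\tfrac12\omega_i$, and $d=\delta\omega_i$ for a small $\delta=\delta(b)$: the $f$--term and the $v_-$--tail term on its right side are $O(1)$ by the smallness of $\|f\|_{L^\infty(B_2)}$ and by $T_i\le\omega_i$, hence $w:=\min\big((\log(a+d)-\log(v+d))_+,\log b\big)$ has $\int_{B_r}|w-(w)_{B_r}|^2\aleq|B_r|$ uniformly in $b$. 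Since $w\equiv0$ on the half--measure set, $(w)_{B_r}\le\tfrac12\log b$, so $w-(w)_{B_r}\ge\tfrac12\log b$ on $\{w=\log b\}$, giving
\[
 \big|\{x\in B_r:u(x)\ge M_i-\tfrac{\omega_i}{4b^2}\}\big|\;=\;|\{w=\log b\}\cap B_r|\;\le\;\frac{C_0}{(\log b)^2}\,|B_r|.
\]
Choosing $b=b(n,s,\kappa)$ so large that this drops below the De Giorgi threshold $\nu_0$, and then iterating the Caccioppoli inequality (Lemma~\ref{lem-cacc}) for the truncations $(u-\kappa_\ell)_+$ with levels $\kappa_\ell\uparrow M_i-\tfrac{\omega_i}{8b^2}$ together with the Sobolev inequality from \eqref{eq:K-coerci} (cf.\ \cite{K07,CKPpmin}), this measure smallness upgrades to the pointwise gain $u\le M_i-\tfrac{\omega_i}{8b^2}$ a.e.\ in $B_{\sigma^{i+1}}$, $\sigma^{i+1}$ a fixed fraction of $r$. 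Setting $\theta:=\tfrac{1}{8b^2}$, $\sigma^\alpha:=1-\tfrac\theta2$, $M_{i+1}:=M_i-\tfrac\theta2\omega_i$ and $m_{i+1}:=M_{i+1}-\omega_{i+1}$ closes the induction (one checks $m_{i+1}\le u$ on $B_{\sigma^{i+1}}$ from $u\ge m_i$, and re--derives $T_{i+1}\le\omega_{i+1}$ from the geometric decay just recorded), and undoing the normalisation gives the asserted estimate.

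\emph{Main obstacle.} All the analytic input is already in place, so the bulk is a transcription of the fractional De Giorgi iteration; the genuinely delicate point is the simultaneous calibration of $\sigma,\alpha,\delta,b$ and, above all, the tail bookkeeping --- one has to verify at \emph{every} step that the coarse--scale contributions to $T_{i+1}$ remain dominated by $\omega_{i+1}$, which is exactly where $\alpha<2s$ is used, and that the chain of truncation levels and the number of De Giorgi sub--steps are compatible with the single fixed smallness constant $\nu_0=\nu_0(n,s,\kappa)$.
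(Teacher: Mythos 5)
Your proposal is correct and follows essentially the same route as the paper, which simply invokes the oscillation--reduction argument of \cite[Lemma 5.1]{CKPpmin} (the nonlocal De Giorgi iteration with tail bookkeeping, fed by the local boundedness of Theorem~\ref{lem:L-infty-bnd-u-ok}, the Caccioppoli estimate of Lemma~\ref{lem-cacc}, the logarithmic estimate of Corollary~\ref{cor:cor-log}, and the coercivity \eqref{eq:K-coerci} standing in for pointwise ellipticity). Your sketch is a faithful expansion of that citation, including the two genuinely nonlocal points the paper leaves implicit --- the inductive control of the tail $T_i$ and the requirement $\alpha<2s$ to absorb the coarse-scale contributions.
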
 
 The proof is exactly the same as the \cite[proof of Lemma 5.1]{CKPpmin}.  We skip the details.

\bibliographystyle{abbrv}
\bibliography{bib}

\end{document}